\numberwithin{equation}{section}
\newtheorem{thm}{Theorem}[section]
\newtheorem{lemma}[thm]{Lemma}
\newtheorem{cor}[thm]{Corollary}
\newtheorem{prop}[thm]{Proposition}
\theoremstyle{definition}
\newtheorem{definition}[thm]{Definition}
\theoremstyle{remark}
\newtheorem{remark}[thm]{Remark}
\DeclareMathOperator{\curl}{curl}
\newcommand{\dofs}{dofs\,}
\newcommand{\dof}{dof}
\newcommand{\comment}[1]{}
\newcommand{\trans}{\tau}
\newcommand{\cV}{\mathcal{V}}
\newcommand{\bV}{\mathbb{V}}
\newcommand{\bR}{\mathbb{R}}
\newcommand{\cS}{\mathcal{S}}
\newcommand{\cL}{\mathcal{L}}
\newcommand{\cR}{\mathcal{R}}
\newcommand{\cP}{\mathcal{P}}
\newcommand{\calT}{\mathcal{T}}
\newcommand{\THWFT}{\calT^{wf}_h}
\newcommand{\WFT}{T^{wf}}
\newcommand{\Div}{{\rm div}\,}
\newcommand{\DivF}{{\rm div}_\F}
\newcommand{\rot}{{\rm rot}}
\newcommand{\bn}{n}
\newcommand{\bt}{t}
\newcommand{\bs}{s}
\newcommand{\bcurl}{{\rm curl}\,}
\newcommand{\Grad}{{\rm grad}}
\newcommand{\mskw}{{\rm mskw}\,}
\newcommand{\inc}{{\rm \bcurl \Xi^{-1} \bcurl}}
\newcommand{\Inc}{{\rm inc}\,}
\newcommand{\tr}{{\rm tr}}
\newcommand{\vskw}{{\rm vskw}}
\newcommand{\skw}{{\rm skw}}
\newcommand{\sym}{{\rm sym}}
\newcommand{\bM}{\mathbb{M}}
\newcommand{\bK}{\mathbb{K}}
\newcommand{\bS}{\mathbb{S}}
\newcommand{\Fct}{F^{\ensuremath\mathop{\mathrm{ct}\,}}}
\newcommand{\Rig}{\mathsf{R}}
\newcommand{\Lag}{\mathsf{X}}
\newcommand{\jmp}[1]{ [\![ {#1}  ]\!] }
\newcommand{\FF}{{\scriptscriptstyle{F\!F}}}
\newcommand{\F}{{\scriptscriptstyle{F}}}
\newcommand{\nF}{{n\F}}
\newcommand{\Fn}{{\F n}}
\newcommand{\incF}{{\rm inc}_\F}
\newcommand{\airy}{{\rm airy}_\F}
\newcommand{\U}{{\scriptscriptstyle{U}}}
\newcommand{\dofcnt}[1]{{\tiny{\text{\ensuremath{{#1}~{\rm{dofs}}}}}}}
\newcommand{\rev}[1]{\textcolor{black}{#1}}
\newcommand{\bbR}{\mathbb{R}}
\title{Discrete Elasticity Exact Sequences on Worsey-Farin splits}
\author[S. Gong]{Sining Gong}
\address{Division of Applied Mathematics, Brown University, Providence, RI 02912 }
\email{sining\_gong@alumni.brown.edu}
\author[J. Gopalakrishnan]{Jay Gopalakrishnan}
\address{Portland State University (MTH),
  PO Box 751, Portland, OR 97207, USA}
\email{gjay@pdx.edu}
\author[J. Guzm\'an]{Johnny Guzm\'an}
\address{Division of Applied Mathematics, Brown University, Providence, RI 02912 }
\email{johnny\_guzman@brown.edu}           
\author[M. Neilan]{Michael Neilan}   
\address{Department of Mathematics, University of Pittsburgh, Pittsburgh, PA 15260}
\email{neilan@pitt.edu}
\thanks{
This work was supported in part by NSF grants DMS--1913083, DMS--2245077,  and DMS--2011733.}
\begin{document}

\maketitle

\begin{abstract} 
  We construct conforming finite element elasticity complexes on Worsey-Farin splits 
  in three dimensions. Spaces for displacement, strain, stress, and the load
  are connected in 
  the elasticity complex through the differential operators representing 
  deformation, incompatibility, and divergence.
  For each of these component spaces,
  a corresponding finite element space on Worsey-Farin meshes is exhibited.
  Unisolvent degrees of freedom are developed for these finite elements,
  which also yields  commuting (cochain) projections on smooth functions.  
A distinctive feature of the spaces in these complexes is the lack of extrinsic supersmoothness
at subsimplices of the mesh.  Notably,
the complex yields 
the first (strongly) symmetric stress \rev{finite} element with no vertex or edge degrees of freedom in three dimensions. 
  Moreover, the lowest order stress space uses only piecewise linear functions 
  which is the lowest feasible polynomial degree for the stress space. 
\end{abstract}

\thispagestyle{empty}

\section{Introduction}

The elasticity complex, also known as the Kr\"oner complex,
can be derived from simpler complexes by an algebraic technique called
the \rev{Bernstein-Gelfand-Gelfand (BGG) resolution \cite{arnold2021complexes, CapSlovaSouce01,eastwood2000complex, chen2022complexes}}. The  utility of the BGG construction  in developing and understanding stress elements for elasticity is now well appreciated~\cite{FE2006}.
However even with this machinery, the construction of conforming, inf-sup stable stress  elements on simplicial 
meshes is still a notoriously challenging task~\cite{boffi2008finite}.
It was not until 2002 that the first conforming elasticity elements
were successfully constructed on two-dimensional triangular meshes by
Arnold and Winther~\cite{arnold2002mixed}. There, they argued {\rev{that
 degrees of freedom (``\dofs'') on  vertices}} are necessary when using 
polynomial approximations on triangular elements. They in fact constructed an
entire discrete elasticity complex and showed how the last two spaces
there are relevant for discretizing the Hellinger-Reissner principle in
elasticity.


Following the creation of the first two-dimensional (2D) conforming
elasticity elements, the first three-dimensional (3D) elasticity
elements were constructed in \cite{adams2005mixed,arnold2008finite},
which paved the way for many other similar elements, as demonstrated
in \cite{hu2015family}.  A natural question that arose was whether
these elements could be seen as part of an entire discrete elasticity
complex, similar to what was done in 2D. Although the work
in~\cite{arnold2008finite} laid the foundation, the task of extending
it to 3D was bogged down by complications.
This is despite the clearly understood 
 BGG procedure to arrive at an elasticity complex 
 of smooth function spaces,
\begin{equation}\label{eq:smooth-complex}
\begin{tikzcd}
  0 \arrow{r}
  &\Rig \arrow{r}{\subset}
  &C^{\infty}\otimes \bV \arrow{r}{\varepsilon}
  & C^{\infty}\otimes \mathbb{S}  \arrow{r}{\text{inc}}
  & C^{\infty}\otimes \mathbb{S}\arrow{r}{\text{div}}
  &C^{\infty}\otimes {\bV}  \arrow{r}&0.
 \end{tikzcd}
\end{equation}
Here and throughout, $\bV = \bR^3$, $\bM = \bR^{3 \times 3}$,
$\Rig=\{ a+b \times x: a, b \in \mathbb{R}^3\}$ denotes rigid
displacements, $\text{inc} =\curl \circ \trans \circ \curl$ with
$\trans$ denoting the transpose, curl and divergence operators are
applied row by row on matrix fields, $\mathbb{S} = \text{sym} (\bM)$,
and $\varepsilon = \text{\sym} \circ \text{grad}$ denotes the
deformation operator.  The complex~\eqref{eq:smooth-complex} is exact
on a 3D contractible domain. We assume throughout that our domain
$\Omega$ is contractible. To give an indication of the aforementioned
complications, first note that the techniques leading up to those
summarized in~\cite{arnold2021complexes} showed how the BGG
construction can be extended beyond smooth complexes
like~\eqref{eq:smooth-complex}.  For example, applying the BGG
procedure to de Rham complexes of Sobolev spaces
$H^s \equiv H^s(\Omega)$, the authors of~\cite{arnold2021complexes} arrived at the
following elasticity complex of Sobolev spaces:
\begin{equation}
  \label{eq:6}
  \begin{tikzcd}[]
    \Rig \arrow{r}{\subset}
    & 
    H^s \otimes \bV \arrow{r}{\varepsilon}
    &
    H^{s-1} \otimes \bS \arrow{r}{\text{inc}}
    &
    H^{s-3} \otimes \bS \arrow{r}{\text{div}}
    &
    H^{s-4} \otimes \bV \arrow{r}
    & 0. 
  \end{tikzcd}
\end{equation}
However, one of the problems in constructing finite element
subcomplexes of~\eqref{eq:6} is the increase of four orders of
smoothness from the last space ($H^{s-4}$) to the first space
($H^s$). A search for finite element subcomplexes of elasticity complexes with
different Sobolev spaces seemed to hold more
promise~\cite{arnold2008finite}.

It was not until 2020 that the first 3D discrete elasticity subcomplex
was established in~\cite{christiansen2020discrete}.  To understand
that work, it is useful to look at it from the perspective of applying
the BGG procedure to a different sequence of Sobolev spaces. Starting
with a Stokes complex,  lining up another de Rham complex with
different gradations of smoothness, and applying the BGG procedure,
one gets
\begin{equation}
  \label{eq:the-complex}
  \begin{tikzcd}[ampersand replacement=\&]
    \rev{\Rig} \arrow{r}{\subset}
    \&
    H^2\otimes {\mathbb{V}} \arrow{r}{{\varepsilon}}
    \&
    \rev{H^1}({\operatorname{inc}})  \arrow{r}{{\operatorname{inc}}}
    \&
    H(\mathop{\operatorname{div}}, {\mathbb{S}}) \arrow{r}{\mathop{\operatorname{div}}}
    \&
    L^2\otimes {\mathbb{V}} \arrow{r}
    \&
    0,
  \end{tikzcd}  
\end{equation}
\rev{where $H^1(\operatorname{inc}) = \{ g \in H^1 \otimes \mathbb S :
  \operatorname{inc} g \in L^2 \otimes \mathbb S\}$.
The proof of exactness  of~\eqref{eq:the-complex}}
is described in more detail in \cite[p.~38--40]{MFO22}. 
The key innovation in~\cite{christiansen2020discrete} was the
construction of two sequences of finite element spaces on
which this BGG argument can
be replicated at the discrete level, resulting in a fully discrete
subcomplex of \eqref{eq:the-complex}. These new finite element sequences were
inspired by the ``smoother'' discrete de Rham complexes (smoother than
the classical N\'ed\'elec spaces \cite{nedelec1980mixed}) recently
being produced in a variety of settings \cite{guzman2022exact, guzman2020exact,
  FuGuzman, christiansen2022finite,
  christiansen2018generalized}. Specifically, the 3D discrete 
  \rev{sub-complex of \eqref{eq:the-complex}}
in~\cite{christiansen2020discrete} was built on meshes of Alfeld
splits, a particular type of macro element.  Soon after the results
of~\cite{christiansen2020discrete} were publicized, Chen and
Huang~\cite{chen2022finite} obtained another 3D discrete elasticity
sequence on general triangulations.  \rev{There,
  they produced a finite element subcomplex of another exact sequence obtained from~\eqref{eq:the-complex} by replacing $H^2 \otimes \mathbb V$ and $H^1(\operatorname{inc})$ with
  $H^1 \otimes \mathbb V$ and $H(\operatorname{inc}) = \{ g \in L^2 \otimes \mathbb S : \operatorname{inc} g \in L^2 \otimes \mathbb S\}$, respectively.
A related work is \cite{chen2022complexes}, 
where several finite element elasticity complexes are constructed 
with various smoothness.}
The BGG construction was also applied to obtain discrete tensor product
spaces in~\cite{bonizzoni2023discrete}.


In this paper, we apply the methodology presented
in~\cite{christiansen2020discrete} to construct a new discrete
elasticity sequence on Worsey-Farin splits
\cite{worsey1987n}.  One of the expected benefits of using
triangulations of macroelements is the potential reduction of polynomial degree
and the  potential escape from the unavoidability~\cite{arnold2008finite} of 
vertex degrees of freedom in stress elements.
We will see that Worsey-Farin splits offer
structures where these benefits can be reaped easier than on Alfeld
splits.
Unlike Afleld splits, which divide each 
tetrahedron into four sub-tetrahedra, Worsey-Farin triangulations split each tetrahedron into twelve sub-tetrahedra. Using the
Worsey-Farin split, 
we are able to reduce the polynomial degree.
Previous works have used either quadratics \cite{christiansen2020discrete} or quartics \cite{chen2022finite} 
as the lowest polynomial order for the stress spaces. However, our approach results in stress spaces that are 
{\em piecewise linear stress elements},
which is the lowest possible polynomial degree.
Furthermore, it results in the first 3D symmetric conforming
stress  \rev{finite} element  {\em without edge and vertex \dofs.} 
This is comparable to the 2D elasticity element without vertex \dofs\ constructed in \cite{arnold1984family, guzman2014symmetric}.
\rev{(Note that discrete symmetric stress spaces
  without vertex or edge \dofs\ have also been constructed in \cite{dassi2020three} using a virtual element methodology.) One other notable feature of our Worsey-Farin elements is the {\em lack of extrinsic supersmoothness}, i.e., our \dofs\ do not impose more smoothness than what is intrinsic to Worsey-Farin splits. In contrast,  the \dofs\ of the  discrete elements in \cite{christiansen2020discrete} on Alfeld splits impose additional extrinsic supersmoothness.}

Although we have the framework in \cite{christiansen2020discrete} to
guide the construction of the discrete complex on Worsey-Farin splits,
as we shall see, we face significant new difficulties peculiar to
Worsey-Farin splits.
The most troublesome of these arises in the construction of \dofs\  and
corresponding commuting projections.  Unlike Alfeld splits,
Worsey-Farin triangulations induce a Clough-Tocher split on each face
of the original, unrefined triangulation.  As a result, discrete 2D
elasticity complexes with respect to Clough-Tocher splits play an
essential role in our construction and proofs. These 2D complexes are
more complicated than their analogues on Alfeld splits (where the
faces are not split). The resulting difficulties are most evident
in the design of \dofs\  for the space before the stress space (named $U_r^1$ later) in the complex, as we shall see 
in Lemma~\ref{lem:dofu1}.


The paper is organized as follows. In the next section, we present the 
main framework to construct the elasticity sequence, define the construction of Worsey-Farin splits, and state 
the definitions and notation used throughout the paper. Section \ref{sec:2dela} gives useful de Rham sequences and 
elasticity sequences on Clough-Tocher splits. Section \ref{sec:localcomplex} gives the construction of the discrete elasticity sequence locally on Worsey-Farin splits with the dimensions of each spaces involved. This leads to our main contribution in Section \ref{sec:LocalDOFs} where we present the degrees of freedom of the discrete spaces in the elasticity sequence with commuting projections.
We finish the paper with the analogous global discrete elasticity sequence in Section \ref{sec:global} and state some conclusions and future directions in Section \ref{sec:Conclude}.

\section{Preliminaries} \label{sec:Pre}

\subsection{A derived complex from two complexes}
Our strategy to obtain an elasticity sequence uses the framework in \cite{arnold2021complexes}
and utilizes two auxiliary de Rham complexes. In particular, we will use a simplified version of their results found in \cite{christiansen2020discrete}. 

Suppose $A_i, B_i$ are Banach spaces, $r_i:A_i \rightarrow A_{i+1}$, $t_i: B_i \rightarrow B_{i+1}$, and $s_i: B_i \rightarrow A_{i+1}$ are bounded linear operators such that the following diagram commutes:
\begin{equation}\label{eqn:seqpattern}
    \begin{tikzcd}
A_0 \arrow{r}{r_0} & A_1 \arrow{r}{r_1} & A_2 \arrow{r}{r_2} & A_3 \\ 
B_0 \arrow{r}{t_0} \arrow[swap]{ur}{s_0} &  B_1 \arrow{r}{t_1} \arrow[swap]{ur}{s_1} & B_2 \arrow{r}{t_2} \arrow[swap]{ur}{s_2}& B_3
\end{tikzcd}
\end{equation}
The following recipe for a derived complex, borrowed from  \cite[Proposition 2.3]{christiansen2020discrete},  guides the gathering of ingredients for our
construction of the elasticity complex on Worsey-Farin splits.

\begin{prop}\label{prop:exactpattern}
    Suppose $s_1:B_1\to A_2$ is a bijection.
\begin{enumerate}
\item If $A_i$ and $B_i$ are exact sequences and the diagram \eqref{eqn:seqpattern} commutes, then the following is an exact sequence:
  \begin{equation}
    \label{eq:1}
  \begin{bmatrix}
        A_0 \\
        B_0
    \end{bmatrix} \xrightarrow{[\begin{smallmatrix}
        r_0 & s_0
    \end{smallmatrix}]}  A_1 \xrightarrow{t_1 \circ s_1^{-1} \circ r_1 } B_2 \xrightarrow{\left[\begin{smallmatrix}
        s_2 \\ t_2
    \end{smallmatrix}\right]}
    \begin{bmatrix}
        A_3 \\ B_3
    \end{bmatrix}.
  \end{equation}
Here the operators $[r_0\ s_0]:\begin{bmatrix}A_0\\B_0\end{bmatrix}\to A_1$
and $\begin{bmatrix}s_2\\ t_2\end{bmatrix}:B_2\to \begin{bmatrix}A_3\\B_3\end{bmatrix}$ are defined, respectively, as
\[
[r_0\ z_0]\begin{bmatrix}a\\b\end{bmatrix} = r_0 a +z_0 b,\qquad
\begin{bmatrix}s_2\\t_2\end{bmatrix}
b = \begin{bmatrix}
s_2 b\\
t_2 b
\end{bmatrix}.
\]
\item For the surjectivity of the last map in~\eqref{eq:1}, namely $\left[\begin{smallmatrix}
        s_2 \\ t_2
                                                                          \end{smallmatrix}\right]$,
                                                                        it is sufficient that $r_2$ and $t_2$ are surjective, $t_1 \circ t_2=0$, and $s_2t_1 = r_2s_1$. 
    \end{enumerate}
\end{prop}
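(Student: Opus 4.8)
The plan is to treat \eqref{eq:1} as a cochain complex and verify its exactness by elementary diagram chases, the one non-formal ingredient being the isomorphism $s_1^{-1}\colon A_2\to B_1$. Write $\alpha=[\,r_0\ s_0\,]$, $\beta=t_1\circ s_1^{-1}\circ r_1$, and $\gamma=\left[\begin{smallmatrix}s_2\\ t_2\end{smallmatrix}\right]$ for the three maps of \eqref{eq:1}. Commutativity of \eqref{eqn:seqpattern} records exactly the two identities $r_1 s_0=s_1 t_0$ and $r_2 s_1=s_2 t_1$, while exactness of $A_\bullet$ and $B_\bullet$ supplies the complex relations $r_{i+1}r_i=0$, $t_{i+1}t_i=0$ together with the image-equals-kernel statements at each node. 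I would first confirm that \eqref{eq:1} is a complex: $\beta\alpha$ applied to $(a,b)$ reduces, after $r_1 r_0=0$, to $t_1 s_1^{-1}(r_1 s_0)b=t_1 s_1^{-1}s_1 t_0 b=t_1 t_0 b=0$; and $\gamma\beta$ splits into $s_2 t_1 s_1^{-1}r_1=r_2 s_1 s_1^{-1} r_1=r_2 r_1=0$ and $t_2 t_1 s_1^{-1}r_1=0$.

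For exactness at $A_1$, I would take $a\in\ker\beta$, set $c=s_1^{-1}r_1 a\in B_1$, and use $t_1 c=0$ with exactness of $B_\bullet$ at $B_1$ to write $c=t_0 b_0$; then $r_1 a=s_1 c=s_1 t_0 b_0=r_1 s_0 b_0$ by commutativity, so $a-s_0 b_0\in\ker r_1=\operatorname{im}r_0$ and $a\in\operatorname{im}\alpha$. For exactness at $B_2$, I would take $b$ with $s_2 b=t_2 b=0$; from $t_2 b=0$ and exactness of $B_\bullet$ at $B_2$ write $b=t_1 b_1$, and then $0=s_2 t_1 b_1=r_2 s_1 b_1$ places $s_1 b_1\in\ker r_2=\operatorname{im}r_1$, say $s_1 b_1=r_1 a$; applying $s_1^{-1}$ gives $b_1=s_1^{-1}r_1 a$, hence $b=t_1 s_1^{-1}r_1 a=\beta a$. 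If exactness at the two ends is also intended, injectivity of $\alpha$ follows from injectivity of $r_0,t_0$ (apply $r_1$ and use $s_1$ injective to force $t_0 b_0=0$), and surjectivity of $\gamma$ is precisely the content of part (2).

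For part (2) I would argue constructively, invoking only the stated hypotheses. Given $(a,b')\in A_3\oplus B_3$, surjectivity of $t_2$ yields $b_0\in B_2$ with $t_2 b_0=b'$; surjectivity of $r_2$ yields $a_2\in A_2$ with $r_2 a_2=a-s_2 b_0$; and bijectivity of $s_1$ lets me set $b=b_0+t_1 s_1^{-1}a_2$. Then $t_2 b=b'+t_2 t_1(s_1^{-1}a_2)=b'$ by the complex relation $t_2\circ t_1=0$, while $s_2 b=s_2 b_0+r_2 s_1(s_1^{-1}a_2)=s_2 b_0+(a-s_2 b_0)=a$ by $s_2 t_1=r_2 s_1$, so $\gamma b=(a,b')$. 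The step I would treat most carefully—and the only genuine subtlety—is this two-stage lift for $\gamma$: the correction $t_1 s_1^{-1}a_2$ must simultaneously repair the $A_3$-component (forcing the use of $s_2 t_1=r_2 s_1$) and leave the already-correct $B_3$-component untouched (forcing $t_2\circ t_1=0$). Everything else is a routine chase whose sole non-formal input is the invertibility of $s_1$.
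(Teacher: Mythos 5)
Your proof is correct, but note that the paper contains no proof of this proposition to compare against: it is stated as borrowed, verbatim, from \cite[Proposition~2.3]{christiansen2020discrete}. Your diagram chase is precisely the standard argument (and essentially the one in that reference): the complex property of \eqref{eq:1} from $r_1s_0=s_1t_0$, $s_2t_1=r_2s_1$ and the composition laws; exactness at $A_1$ by pushing $s_1^{-1}r_1a$ down to $B_1$ and lifting through $t_0$; exactness at $B_2$ by lifting $b$ through $t_1$ and transferring to the $A$-row via $s_1$; and, for part~(2), the two-stage lift $b=b_0+t_1s_1^{-1}a_2$ whose correction term fixes the $A_3$-component (via $s_2t_1=r_2s_1$) without disturbing the $B_3$-component (via $t_2\circ t_1=0$). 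You also handled the statement's typo sensibly: as written, ``$t_1\circ t_2=0$'' does not type-check ($t_1\colon B_1\to B_2$, $t_2\colon B_2\to B_3$), and your reading $t_2\circ t_1=0$ is the intended one; indeed it is automatic when the $B$-row is a complex, and in the paper's application it always is. One small trim: your parenthetical on injectivity of $[\,r_0\ s_0\,]$ requires injectivity of $r_0$ and $t_0$, which part~(1) does not assume; since \eqref{eq:1} neither begins nor ends with $0$, exactness is asserted only at $A_1$ and $B_2$, so that remark (which you correctly flag as conditional) can be dropped.
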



\subsection{Construction of Worsey-Farin Splits}\label{subsec:WFconstruct}

For a set of simplices $\cS$, we use $\Delta_s(\cS)$ to denote the set of 
$s$-dimensional simplices ($s$-simplices for short) in $\cS$.  
If $\cS$ is a simplicial triangulation of a domain $D$ with boundary, then $\Delta_s^I(\cS)$ denotes the 
subset of $\Delta_s(\cS)$ that {does} not belong to the boundary
of the domain.  If $S$ is a simplex, then we use the convention $\Delta_s(S) = \Delta_s(\{S\})$. For a non-negative integer $r$, we use $\mathcal{P}_r(S)$ to denote the space of \rev{polynomials} of degree $\leq r$ on $S$,
 and we define
 \begin{align*}
\cP_r(\cS) = \prod_{S\in \cS} \cP_r(S), \quad L^2_0(D):=\{q \in L^2(D): \int_D q~ dx = 0\}. 
 \end{align*}

Let $\Omega\subset \bbR^3$ be a contractible 
polyhedral domain, and let $\{\calT_h\}$ be \rev{a} family of shape-regular
and simplicial triangulations of $\Omega$.
The Worsey-Farin refinement of $\calT_h$, denoted by
$\mathcal{T}^{wf}_h$, is obtained by
splitting each $T \in \mathcal{T}_h$ by the following two steps (cf.~\cite[Section 2]{guzman2022exact} \rev{and Figure \ref{fig:three graphs}}):
\begin{enumerate}
\item  Connect the incenter $z_T$ of $T$
 to its (four) vertices. 
\item For each face $F$ of $T$ choose $m_\F \in  {\rm int}(F)$.  We then connect $m_\F$ to the three vertices of $F$ and to the incenter $z_T$.
\end{enumerate}
Note that the first step is an Alfeld-type refinement of $T$ with respect to the incenter \cite{christiansen2020discrete}.
We denote the local mesh of the Alfeld-type refinement by $T^a$, which consists of four tetrahedra.
The choice of the point $m_\F$ in the second step needs to follow specific rules: for each interior face $F = \overline{T_1} \cap \overline{T_2}$ with $T_1$,
 $T_2 \in \calT_h$, let $m_\F = L \cap F$ where $L = [{z_{T_1},z_{T_2}}]$, the line segment connecting the incenters of $T_1$ and $T_2$; for a boundary face $F$ with $F = \overline{T} \cap \partial \Omega$ with $T\in \calT_h$, let $m_\F$ be the barycenter of $F$. The fact that such a $m_\F$ exists is established in \cite[Lemma  16.24]{lai2007spline}.

For $T\in \calT_h$, we denote by $\WFT$ the local
Worsey-Farin mesh induced by the global refinement $\calT_h^{wf}$, i.e.,
\[
\WFT = \{K\in \calT_h^{wf}:\ \bar K\subset \bar T\}.
\]
For any face $F \in \Delta_2(\mathcal{T}_h)$, the refinement $\calT_h^{wf}$ induces a Clough-Tocher triangulation of $F$, i.e.,
a two-dimensional triangulation consisting of three triangles, each having
the common vertex $m_\F$; we denote this set of three triangles by $F^{ct}$; \rev{see Figure \ref{fig:Fct}}.
We then define
\[
\mathcal{E}(\mathcal{T}_h^{wf}) = \{e \in \Delta_1^I(F^{ct}): \text{ for all } F \in \Delta_2^I(\mathcal{T}_h) \}
\] 
to be the set of all interior edges of the Clough-Tocher refinements
in the global mesh.

For a tetrahedron $T \in \calT_h$ and face $F \in \Delta_2(T)$, we denote by $\bn_\F := \bn|_\F$ the outward unit normal of $\partial T$ restricted to $F$. Consider the triangulation $\Fct$ of $F$ with three triangles labeled as $Q_i$, $i =1,2,3$. Let $e = \partial Q_1 \cap \partial Q_2$ and $\bt_e$ be the unit vector tangent to $e$ pointing away from $m_\F$. Then the jump of $p \in \cP_r(\WFT)$ across $e$ is defined as
\[
\jmp{p}_{e} = (p|_{Q_1}-p|_{Q_2})\bs_e,
\]
where $\bs_e = \bn_\F \times \bt_e$ is a unit vector orthogonal to $\bt_e$ and $\bn_\F$. In addition, let $f$ be the  internal face of $\WFT$ that has $e$ as an edge. Now let $\bn_f$ be a unit-normal to $f$ and set $\bt_s = \bn_f \times \bt_e$ to be a tangential unit vector on the internal face $f$. 

Let $T_1$ and $T_2$ be two adjacent tetrahedra in $\calT_h$ that share a face $F$, and let $Q_i$, $i =1,2,3$ denote three triangles in the set $\Fct$. Let $e = \partial Q_1 \cap \partial Q_2$, and for a piecewise smooth function defined on 
$T_1 \cup T_2$, we define
  \begin{equation}\label{eqn:thetaEDef}
 \theta_e(p) = p|_{\partial T_1 \cap Q_1}-p|_{\partial T_1 \cap Q_2}+p|_{\partial T_2 \cap Q_2}-p|_{\partial T_2 \cap Q_1},~~~~~~ \text{on~} e.
 \end{equation}
 Note that $\theta_e(p)=0$ if and only if $\jmp{p|_{T_1}}_{e}=\jmp{p|_{T_2}}_{e}$.
\begin{figure}
      \centering
      \begin{subfigure}[b]{0.3\textwidth}
          \centering
          \includegraphics[width=\textwidth]{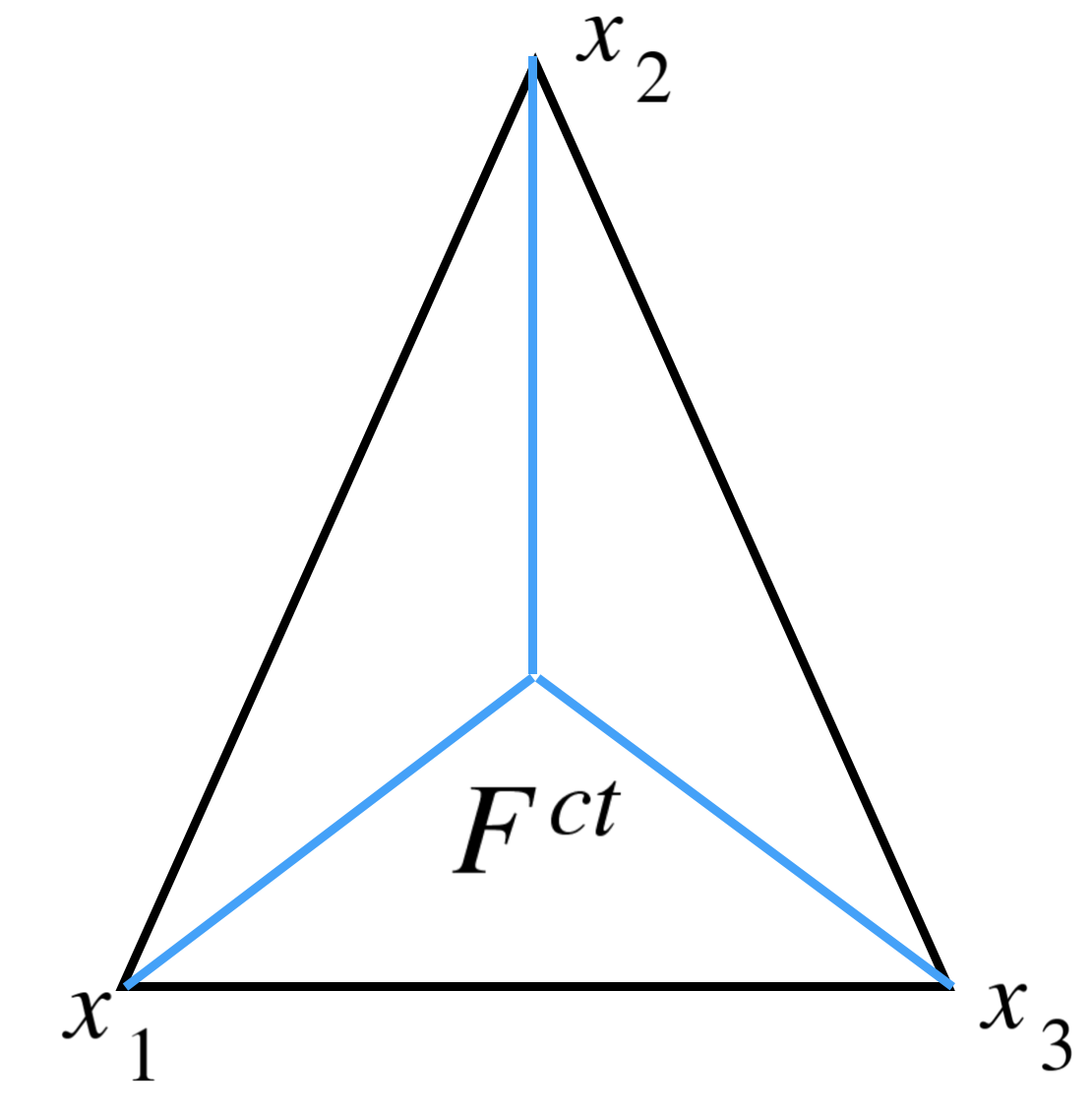}
          \caption{A representation of $F^{ct}$ and $\Delta_1^I(F^{ct})$ (indicated in blue).}
          \label{fig:Fct}
      \end{subfigure}
      \hfill
      \begin{subfigure}[b]{0.3\textwidth}
          \centering
          \includegraphics[width=\textwidth]{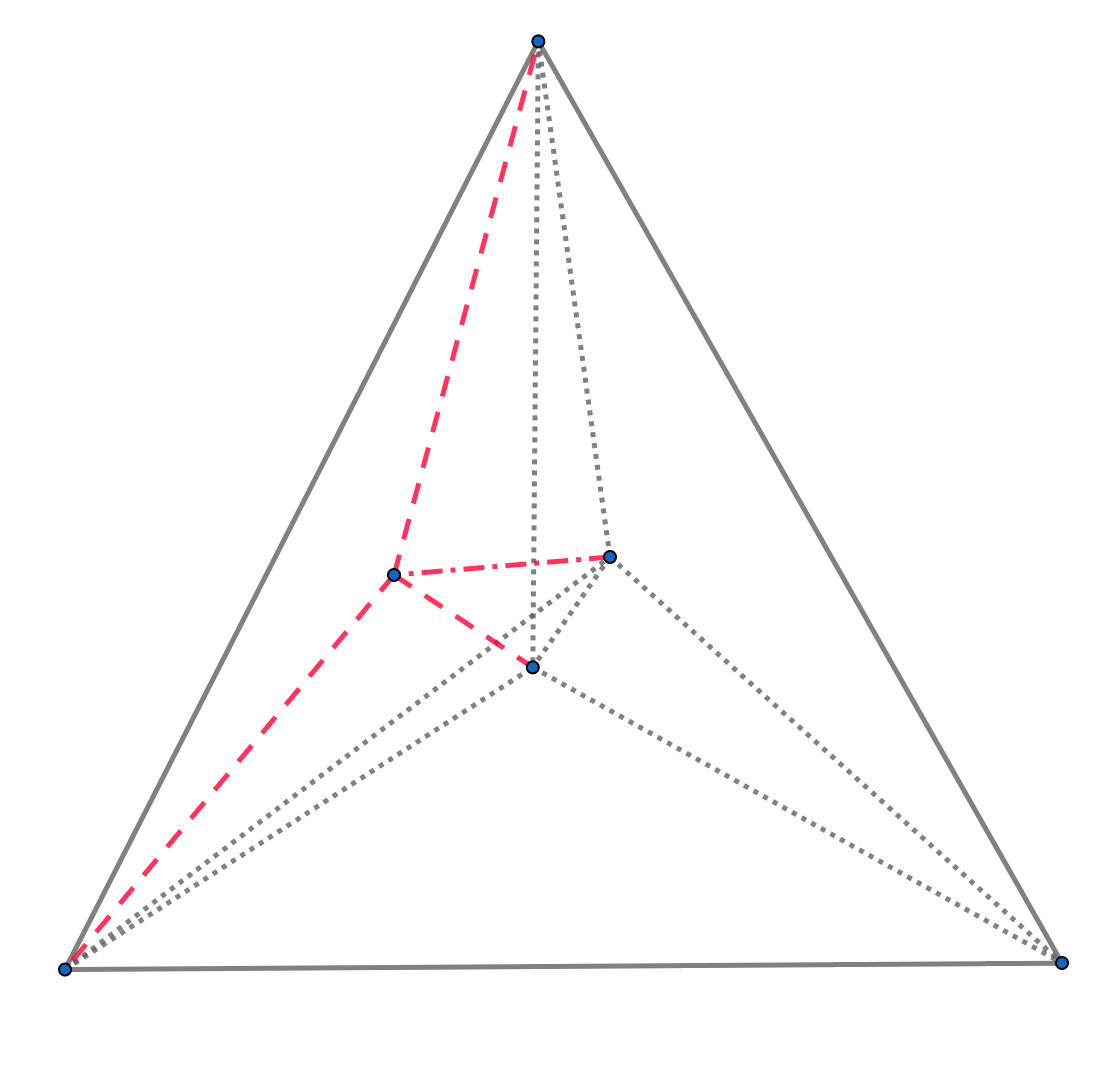}
          \caption{\rev{Alfeld refinement and Worsey-Farin refinement (local) indicated in red}}
          \label{fig:WF-show}
      \end{subfigure}
      \hfill
      \begin{subfigure}[b]{0.3\textwidth}
          \centering
          \includegraphics[width=\textwidth]{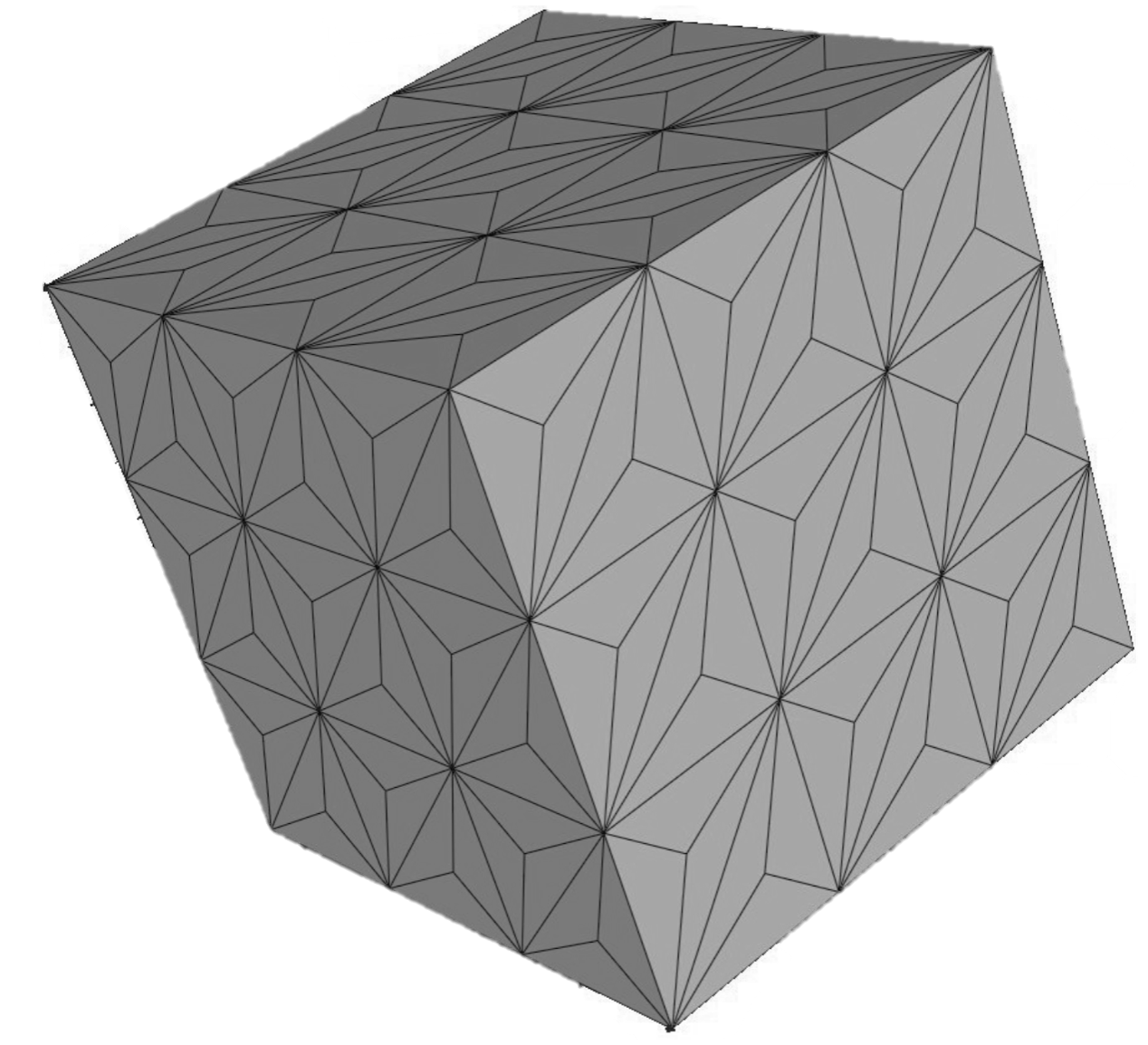}
          \caption{Worsey-Farin refinement (global)}
          \label{fig:WF2}
      \end{subfigure}
         \caption{The Worsey-Farin Splits}
         \label{fig:three graphs}
 \end{figure}

\subsection{Differential identities involving matrix and vector fields}\label{subsec:Matrix}
We adopt the notation used in \cite{christiansen2020discrete}. Let $F \in \Delta_2(T)$, and recall $\bn_\F$ is the unit normal vector
pointing out of $T$. Fix two tangent vectors $\bt_1, \bt_2$ 
such that the ordered set 
$(b_1, b_2, b_3) = (\bt_1, \bt_2, \bn_\F)$ is an
orthonormal right-handed basis of $\mathbb{R}^3$. Any matrix field
$u: T \to \mathbb{R}^{3 \times 3}$ can be written as
$ \sum_{i, j=1}^3 u_{ij}b_i b_j'$ with scalar components
$u_{ij}: T \to \bR$.  Let $ u_{nn} = \bn_\F' u \bn_\F$ and
$ \tr_\F u = \sum_{i=1}^2 \bt_i' u \bt_i.$ 
With $s\in \bbR^3$, let
\begin{equation}
  \label{eq:11}
  u_{\FF} =\sum_{i, j = 1}^2 u_{ij}\bt_i \bt_j',
  \qquad
  u_{F s}= \sum_{i=1}^2 ( s'u\bt_i) \bt_i',
  \qquad
  u_{sF}= \sum_{i=1}^2 (\bt_i' u s) \bt_i,. 
\end{equation}
Equivalently,
$u_{\FF} = Qu Q$, $u_{F s} =s' uQ,$ and $u_{s F} = Q u s$, where
$P = \bn_\F\bn_\F'$ and $Q = I-P$.
Next, for  scalar-valued (component) functions
$\phi, w_i, q_i$ and $u_{ij}$, we write the standard surface operators as
\begin{align*}
  & \Grad_\F \phi  = (\partial_{\bt_1} \phi)\bt_1 +(\partial_{\bt_2}\phi)  \bt_2,
  &
    \Grad_\F (w_1\bt_1 + w_2 \bt_2)  = \bt_1 (\Grad_\F w_1)' + \bt_2(\Grad_\F w_2)',&
  \\
  &\rot_\F \phi  =(\partial_{\bt_2}\phi) \bt_1 - (\partial_{\bt_1} \phi) \bt_2,
  &
    \rot_\F (q_1\bt_1' + q_2 \bt_2') = \bt_1 (\rot_\F q_1)' + \bt_2(\rot_\F q_2)',&
  \\
  & \curl_\F (w_1 \bt_1+w_2 \bt_2)  = \partial_{\bt_1} w_2 -\partial_{\bt_2} w_1, 
  &   \curl_\F u_{\FF}
    = \bt_1' \,\curl_\F (u_{F \bt_1})' + \bt_2'\,\curl_\F (u_{F \bt_2})', &  \\
  & \DivF (w_1 \bt_1+w_2 \bt_2)  = \partial_{\bt_1} w_1 + \partial_{\bt_2} w_2, 
  &   \DivF u_{\FF}
    = \bt_1' \,\DivF (u_{F \bt_1})' + \bt_2'\,\DivF (u_{F \bt_2})'. & 
\end{align*}
\rev{These operators are defined such that they are consistent with the conventions in \cite{christiansen2020discrete}.
In particular, we define $\rot_\F$, such that the resulting operator $\airy$ mimics the three-dimensional operator, $\Inc$.}
For a vector function $v$, denote 
$v_\F = Q v = \bn_\F \times (v \times \bn_\F)$. It is easy to see that
\begin{equation}
  \label{eq:10}
  \begin{aligned}
  \bn_\F \cdot \bcurl v &= \bcurl_\F v_\F, \quad
  &&(\Grad \, v)_{\FF} = \Grad_\F v_\F,\\
  \quad \bn_\F \times \rot_\F \phi &= \Grad_\F \phi, \quad &&\Div v_\F = \DivF v_\F.
  \end{aligned}
\end{equation}

\begin{definition}\label{def:Vperp}
For a tangential vector function $v$ on the face $F \in \Delta_2(T)$,
write $v = \sum\limits_{i=1}^2 v_i \bt_i$ with $v_i = v \cdot \bt_i$. 
    We define the orthogonal complement of $v$ as
    \[
    v^\perp = v_2 \bt_1 - v_1 \bt_2. 
    \]
\end{definition}    
Using this definition and the standard surface operators introduced above, it is easy to see the following identities:
\begin{equation}\label{iden:Vperp}
    \DivF v^\perp = \curl_\F v, \quad v^\perp \cdot \bt_e = v \cdot \bs_e,\quad v^{\perp} = v \times \bn_\F.
\end{equation}

     We also define the space of rigid body displacements within $\mathbb{R}^3$ and the face $F$: 
    \begin{alignat}{1} 
        \label{eqn:rigid1}
        \Rig & =\{a+b \times x: a, b \in \mathbb{R}^3\} \\
        \label{eqn:rigid2}
        \Rig(F)& = \{a \bt_1 + b \bt_2 + c((x \cdot \bt_1)\bt_2 - (x \cdot \bt_2)\bt_1): a, b, c \in \mathbb{R}\}.
    \end{alignat}


\begin{definition}\label{def:maps}\
 Set $\bV = \bR^3$, 
 and $\bM_{k \times k} = \bbR^{k\times k}$. 
    \begin{enumerate}
        \item The skew-symmetric operator $\skw: \bM_{k \times k} \rightarrow \bM_{k \times k}$ and the symmetric operator $\sym: \bM_{k \times k} \rightarrow \bM_{k \times k}$ are defined as follows: for any $M \in \bM_{k \times k}$, 
        \[
        \skw(M) = \frac{1}{2}(M-M'); \quad \sym(M) = \frac{1}{2}(M+M').
        \]
        Denote the range of $\skw$ and $\sym$ as $\bK_k = \skw(\bM_{k \times k})$ and $\bS_k = \sym(\bM_{k \times k})$, respectively.  
\item Define the operator $\Xi : \bM_{3 \times 3} \rightarrow \bM_{3 \times 3}$ by $\Xi M = M'-{\rm tr} (M) \mathbb{I}$, where $\mathbb{I}$ is the $3\times 3$
identity matrix.\medskip
\item The three-dimensional symmetric gradient
and incompatibility operators
 are given, respectively, by:
\[
 \varepsilon = \sym \, \Grad, \quad \Inc = \curl (\curl)'. 
\]
\item The operators $\mskw: \mathbb{V} \rightarrow \mathbb{K}_3$ 
and $\vskw:\bM_{3\times 3}\to \mathbb{V}$ are given by 
    \[\mskw \begin{pmatrix}
        v_1 \\
        v_2 \\
        v_3
    \end{pmatrix} = \begin{pmatrix}
        0 & -v_3 & v_2 \\
        v_3 & 0 & -v_1 \\
        -v_2 & v_1 & 0
    \end{pmatrix},\qquad \vskw := \mskw^{-1} \circ \skw.
    \]
\item The two-dimensional surface differential operators on a face $F$
are given by
\[
\varepsilon_\F = \sym \, \Grad_\F, \quad 
\airy = \rot_\F(\rot_\F)', \quad 
\incF := \curl_F(\curl_F)'.
\]

\item    The two-dimensional skew operator
defined on either a scalar or matrix-valued function
is defined, respectively, as
    \[
    {\rm skew} \, u = 
    \begin{bmatrix}
        0 & u \\
        -u & 0 
    \end{bmatrix}; \quad 
    {\rm skew} \, \begin{bmatrix}
        u_{11} & u_{12} \\
        u_{21} & u_{22} 
    \end{bmatrix} = u_{21} - u_{12}.
    \]
\item    The transpose operator $\tau$ is defined as: $\tau \, u = u'$.
    \end{enumerate}
\end{definition}


It is simple to see that $\Xi$ is invertible
with $\Xi^{-1} M = M'-\frac12 {\rm tr}(M)\mathbb{I}$.
Furthermore, the following  identities hold:
\begin{subequations}
\begin{alignat}{1}
\label{eqn:iden1}
   & \Div \Xi  = 2\vskw \, \bcurl, \\
\label{eqn:iden2}
  &   \Xi \Grad = - \bcurl \, \mskw, \\
\label{eqn:iden3}
& \inc \mskw = - \bcurl \Xi^{-1} \Xi \Grad = -\bcurl \Grad = 0, \\
\label{eqn:iden4}
& 2\, \vskw \, \inc = \Div \Xi \Xi^{-1} \bcurl = \Div \bcurl = 0, \\
\label{eqn:iden5}
&  \text{tr}(\bcurl \sym)=0, \quad \inc \sym = \curl(\curl \sym)' = \Inc \sym.
\end{alignat}
\end{subequations}
On a two-dimensional face $F$, there also holds
\begin{subequations}
    \begin{alignat}{1} 
    \label{2didenela1}
    &\DivF \, \airy = \DivF \, \rot_\F\, \tau\,(\rot_\F) = 0, \\
    \label{2didenela2}
    &\incF \, \sym = \incF, \quad 
    \incF \, \varepsilon_\F = \curl_\F \, \tau\, \curl_\F \Grad_\F = 0, \\
    \label{2didenela3}
    &\curl_\F \, {\rm skew}  = \tau \, \Grad_\F.
\end{alignat}
\end{subequations}

The following lemma states
additional identities used throughout the paper.
Its proof is found in 
\cite[Lemma 5.7]{christiansen2020discrete}.
\begin{lemma}\label{lem:iden}
  For a sufficiently smooth matrix-valued function $u$,
\begin{subequations} \label{id}
  \begin{alignat}{1}
      \label{curlid}
      s' \,(\bcurl u) \,\bn_\F \,  & = \curl_\F (u_{\F s})', \text{ for any } s
      \in \mathbb{R}^3,
      \\
      \label{id4}
      \big[(\bcurl u)'\big]_{\Fn}& = \curl_\F  u_{\FF}.  
      \intertext{If in addition $u$ is symmetric, then}
      \label{id1}
      (\Inc u )_{nn}
      &=   \incF \, u_\FF, 
      \\
            \label{id2}
      (\Inc u)_{\Fn} &= \curl_\F\big[(\bcurl u)'\big]_{\FF},
      \\
      \label{id3}
      \tr_\F \bcurl u
      & =-\curl_\F (u_{\Fn})'.  
     \intertext{For a sufficiently smooth vector-valued function $v$,}
      2 (\bcurl \varepsilon (v))'
      & = \Grad \, \bcurl v \label{more1},
      \\
      2\left[(\bcurl \varepsilon (v))'\right]_{\FF}
      & =  \Grad_\F (\bcurl v)_\F \label{more2},
      \\
      \label{more3}
      \bcurl v
      & = \bn_\F (\curl_\F v_\F) + \rot_\F (v \cdot \bn_\F) + \bn_\F \times \partial_n v,
      \\
      \label{more4}
      2[\varepsilon(v)]_{n \F} & = 2 [\varepsilon(v)_{\Fn}]' 
      =
      \Grad_\F (v\cdot \bn_\F)
      + \partial_n v_\F,
      \\
      \label{more5}
      \tr_\F (\rot_\F v_\F') & = \bcurl_\F v_\F.
    \end{alignat}
\end{subequations}
\end{lemma}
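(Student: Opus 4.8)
The plan is to fix, once and for all, the constant orthonormal right-handed frame $(b_1,b_2,b_3)=(\bt_1,\bt_2,\bn_\F)$ attached to the face $F$ and to rewrite every three-dimensional operator in the induced Cartesian coordinates, writing $\partial_1,\partial_2$ for the tangential derivatives $\partial_{\bt_1},\partial_{\bt_2}$ and $\partial_n$ for the normal derivative. Because the frame is constant on $F$, the frame vectors commute with all derivatives, so each identity reduces to bookkeeping of scalar components with no metric terms. The engine of the whole lemma is the pair of ``row-curl'' identities \eqref{curlid} and \eqref{id4}: I would prove these first by writing $\bcurl$ row by row and invoking the scalar identity $\bn_\F\cdot\bcurl w=\bcurl_\F w_\F$ from \eqref{eq:10} on each row of $u$ (for \eqref{curlid}) and on each row of $u'$ (for \eqref{id4}), then collecting the tangential pieces into $u_{\F s}$ and $u_{\FF}$ via the definitions in \eqref{eq:11}.

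With \eqref{curlid} and \eqref{id4} in hand, the incompatibility identities \eqref{id1}--\eqref{id3} follow by iterating the row-curl step. Since $\Inc u=\bcurl\,\tau\,\bcurl u$, I would set $w=(\bcurl u)'$ and apply the row-curl identities a second time to $w$: the $\Fn$-component \eqref{id2} is exactly \eqref{id4} applied to $w$, while the $nn$-component \eqref{id1} comes from \eqref{curlid} with $s=\bn_\F$ applied to $w$, after which \eqref{id4} (used on the inner tangential block produced by the first curl) turns the result into $\curl_\F\,\tau\,\curl_\F u_{\FF}=\incF u_{\FF}$. The symmetry of $u$ enters precisely here, to identify the tangential--tangential block of $(\bcurl u)'$ with the block produced by the first application of curl. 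Identity \eqref{id3} is again a direct consequence of \eqref{eq:10} together with the definition of the surface trace $\tr_\F$.

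For the vector identities I would proceed as follows. Identity \eqref{more1} is the classical relation obtained from $\bcurl\,\Grad=0$: writing $2\varepsilon(v)=\Grad v+(\Grad v)'$, the gradient part is annihilated by $\bcurl$ row by row, leaving $\bcurl(\Grad v)'$, whose $i$-th row is $\partial_i\bcurl v$, which assembles into $(\Grad\bcurl v)'$. Identity \eqref{more2} is then just the $\FF$-block of \eqref{more1} combined with $(\Grad v)_{\FF}=\Grad_\F v_\F$ from \eqref{eq:10}. Identity \eqref{more3} is the decomposition of $\bcurl v$ in the frame: its $\bn_\F$-component is $\curl_\F v_\F$ by \eqref{eq:10}, and the tangential part regroups into $\rot_\F(v\cdot\bn_\F)+\bn_\F\times\partial_n v$ after separating out the normal derivative. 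Identity \eqref{more4} is the component reading of $2\varepsilon(v)=\Grad v+(\Grad v)'$ in the frame, and \eqref{more5} is a purely two-dimensional identity obtained by expanding $\rot_\F$ on the matrix $v_\F'$ and comparing with $\tr_\F$ and $\bcurl_\F$ through the definitions.

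The main obstacle is the inc identities \eqref{id1}--\eqref{id2}. Applying the row-curl identities twice forces one to track, across two successive curls, which indices are tangential and which are normal, together with the transpose inside $\Inc=\bcurl\,\tau\,\bcurl$; the delicate point is to arrange the two nested surface curls so that their composition is exactly $\incF=\curl_\F\,\tau\,\curl_\F$ acting on $u_{\FF}$, which is where the symmetry hypothesis on $u$ is indispensable. Once this bookkeeping is organized, every remaining step is a mechanical expansion of the definitions in the fixed frame.
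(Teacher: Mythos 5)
You are right that the paper contains no proof of Lemma~\ref{lem:iden}; it defers entirely to \cite[Lemma 5.7]{christiansen2020discrete}, and the argument there is exactly the frame-based componentwise computation you outline, so your plan follows essentially the same route. The engine is sound: \eqref{curlid} follows from applying $\bn_\F\cdot\bcurl v=\bcurl_\F v_\F$ to each row of $u$ and summing with the constant weights $s_i$, and \eqref{id4} is then just \eqref{curlid} taken with $s=\bt_1,\bt_2$ (not, as you write, \eqref{eq:10} applied to rows of $u'$: the entries of $(\bcurl u)\bn_\F$ are normal components of curls of rows of $u$). Two spots in your sketch are looser than you indicate. For \eqref{id2}, identity \eqref{id4} applied to $w=(\bcurl u)'$ gives $[(\bcurl w)']_{\Fn}=\curl_\F w_{\FF}$, i.e.\ the tangential part of the \emph{column} $(\bcurl w)\bn_\F$, whereas $(\Inc u)_{\Fn}=\bn_\F'(\bcurl w)Q$ is the tangential part of a \emph{row}; these coincide precisely because $\Inc u=\bcurl w$ is symmetric whenever $u$ is (cf.~\eqref{eqn:iden4}--\eqref{eqn:iden5}), which is the actual role of the symmetry hypothesis here, rather than the block identification you describe (for \eqref{id1}, by contrast, no symmetry is needed: it is \eqref{curlid} with $s=\bn_\F$ composed with \eqref{id4}, exactly as you say). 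For \eqref{id3}, the identity is not a consequence of \eqref{eq:10} at all: $\tr_\F\bcurl u$ involves tangential, not normal, components of row curls, and the frame expansion gives $\tr_\F\bcurl u=(\partial_{\bt_2}u_{13}-\partial_{n}u_{12})+(\partial_{n}u_{21}-\partial_{\bt_1}u_{23})$ in the basis $(\bt_1,\bt_2,\bn_\F)$, so you need $u_{12}=u_{21}$ to cancel the normal-derivative terms and $u_{i3}=u_{3i}$ to arrive at $-\curl_\F(u_{\Fn})'$; your sketch never invokes symmetry for \eqref{id3}, though it is indispensable there. Both points are repaired by the very coordinate bookkeeping you propose, so the plan as a whole stands.
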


\subsection{Hilbert spaces}\label{subsec:Hilbert spaces} We summarize the definitions of Hilbert spaces which we use to define the discrete spaces.
For any $T \in \calT_h$, we commonly use $\mathring{(\cdot)}$ to denote the corresponding spaces with vanishing traces; see the following two examples:
\begin{align*}
\mathring{H}({\rm div}, T): = \{v \in H({\rm div}, T):  v \cdot \bn|_{\partial T} = 0\}, \quad \mathring{H}(\bcurl, T): = \{v \in H(\bcurl, T):  v \times \bn|_{\partial T} = 0\}.
\end{align*}
In addition, for any face $F \in \Delta_2(T)$ with $T \in \calT_h$, we define the following spaces by using surface operators in Section \ref{subsec:Matrix}:
\begin{alignat*}{3}
&H({\rm div}_\F, F) := \{v \in [L^2(F)]^2: \DivF v \in L^2(F)\}, 
\mathring{H}({\rm div}_\F, F): = \{v \in H({\rm div}_\F, F):  v \cdot \bs|_{\partial F} = 0\}, \\
&H({\rm curl}_\F, F) := \{v \in [L^2(F)]^2: \curl_F v \in L^2(F)\}, 
\mathring{H}({\rm curl}_\F, F): = \{v \in H({\rm curl}_\F, F):  v \cdot \bt|_{\partial F} = 0\},\\
&H({\rm grad}_\F, F) := \{v \in L^2(F): \Grad_F v \in L^2(F)\}, 
\mathring{H}({\rm grad}_\F, F): = \{v \in H({\rm grad}_\F, F):  v|_{\partial F} = 0\},
\end{alignat*}
where $\bs$ denotes the outward unit normal of $\partial F$ and $\bt$ denotes the unit tangential of $\partial F$.

\section{Discrete complexes on Clough-Tocher splits}\label{sec:2dela}
Recall a Worsey-Farin split 
of a tetrahedron induces a Clough-Tocher
split on each of its faces.
As a result, to construct
degrees of freedom and commuting projections
for discrete three-dimensional elasticity complexes on Worsey-Farin splits,
we first derive two-dimensional discrete elasticity complexes
on Clough-Tocher splits.  Throughout this section,
$F\in \Delta_2(\calT_h)$ is a face
of the (unrefined) triangulation $\calT_h$,
and $F^{ct}$ denotes its Clough-Tocher refinement
with respect to the split point $m_F$ (arising from the 
Worsey-Farin refinement of $\calT_h$).

\subsection{de Rham complexes}
As an intermediate step to derive
elasticity complexes on $F^{ct}$,
we first state several discrete de Rham complexes
with various levels of smoothness. 
First, we define the  
N\'ed\'elec spaces (without and with boundary conditions)
on the Clough--Tocher split:
\begin{alignat*}{3}
&V_{{\rm div}, r}^1(\Fct):= \{ v \in H({\rm div}_\F, F): v|_Q \in [\cP_r(\rev{\tau})]^2, \tau \in \Fct \},  \quad && \mathring{V}^1_{{\rm div},r}(\Fct) := V_{{\rm div}, r}^1(\Fct)\cap 
\mathring{H}({\rm div}_\F, F)\\
&V_{\curl, r}^1(\Fct):= \{ v \in H(\curl_\F,F): v|_\tau \in {[\cP_r(\rev{\tau})]^2},  \tau \in \Fct \},\quad && \mathring{V}^1_{{\curl},r}(\Fct) := V_{\rm curl, r}^1(\Fct)\cap 
\mathring{H}({\curl}_\F, F),\\
&{V}_r^2(\Fct):=\{ v \in L^2(F): v|_\tau \in \cP_r(\rev{\tau}), \tau \in \Fct\},\quad &&\mathring{V}_r^2(\Fct):=V_r^2(\Fct)\cap  L^2_0(F),
\end{alignat*}
and the Lagrange spaces,
\begin{alignat*}{3}
&\Lag_r^0(\Fct) := V_r^2(\Fct)\cap H({\rm grad}_\F, F),\quad && \mathring{\Lag}_r^0(\Fct) := \Lag_r^0(\Fct)\cap \mathring{H}({\rm grad}_\F, F),\\
&\Lag_r^1(\Fct) := [\Lag_r^0(\Fct)]^2,\quad && \mathring{\Lag}_r^1(\Fct) := [\mathring{\Lag}_r^0(\Fct)]^2,\\
&\Lag_r^2(\Fct) := \Lag_r^0(\Fct),\quad && \mathring{\Lag}_r^2(\Fct) := \mathring{\Lag}_r^0(\Fct)\cap L^2_0(F).
\end{alignat*}
\rev{Note that superscripts in the notation for the
spaces refer to the order of the corresponding differential forms.}

Finally, we define the (smooth) piecewise polynomial
subspaces with $C^1$ continuity.
\begin{alignat*}{1}
&S_r^0(\Fct) :=\{v\in \Lag_r^0(\Fct):\ {\rm grad}_\F\,v \in {\Lag_{r-1}^1(\Fct)} \},\\
&\mathring{S}_r^0(\Fct) :=\{v\in \mathring{\Lag}_r^0(\Fct):\ {\rm grad}_\F\,v \in {\mathring{\Lag}_{r-1}^1(\Fct)} \},\\
%
%
& \mathcal{R}_r^0(\Fct) := \{v \in S_r^0(\Fct) : v|_{\partial F} = 0\}.
%
\end{alignat*}
\rev{The first space $S^0_r(\Fct)$ is the so-called 
Hsieh-Clough-Tocher $C^1$ finite element space \cite{clough1965finite}.}
Several combinations of these spaces form exact sequences, 
as summarized in the following theorem.
\begin{thm}\label{2dseqs}
Let \rev{$r \geq 3$}. The following sequences are exact \cite{arnold1992quadratic,FuGuzman}.
\begin{subequations}
\begin{alignat}{4}
&\mathbb{R}\
{\xrightarrow{\hspace*{0.5cm}}}\
{{\Lag}}_{r}^0(\Fct)\
&&\stackrel{{\rm grad}_\F}{\xrightarrow{\hspace*{0.5cm}}}\
{{V}}_{\curl,r-1}^1(\Fct)\
&&\stackrel{\curl_\F}{\xrightarrow{\hspace*{0.5cm}}}\
{{V}}_{r-2}^2(\Fct)\
&&\xrightarrow{\hspace*{0.5cm}}\
 0,\label{alfseq1}\\
 &\mathbb{R}\
{\xrightarrow{\hspace*{0.5cm}}}\
{S}_{r}^0(\Fct)\
&&\stackrel{{\rm grad}_\F}{\xrightarrow{\hspace*{0.5cm}}}\
{\Lag}_{r-1}^1(\Fct)\
&&\stackrel{\curl_\F}{\xrightarrow{\hspace*{0.5cm}}}\
{V}_{r-2}^2(\Fct)\
&&\xrightarrow{\hspace*{0.5cm}}\
 0,\label{alfseq2}\\
%
&0\
{\xrightarrow{\hspace*{0.5cm}}}\
{\mathring{\Lag}}_{r}^0(\Fct)\
&&\stackrel{{\rm grad}_\F}{\xrightarrow{\hspace*{0.5cm}}}\
{\mathring{V}}_{\curl,r-1}^1(\Fct)\
&&\stackrel{\curl_\F}{\xrightarrow{\hspace*{0.5cm}}}\
{\mathring{V}}_{r-2}^2(\Fct)\
&&\xrightarrow{\hspace*{0.5cm}}\
 0,\label{2dbdryseq1}\\
 &0\
{\xrightarrow{\hspace*{0.5cm}}}\
\mathring{S}_{r}^0(\Fct)\
&&\stackrel{{\rm grad}_\F}{\xrightarrow{\hspace*{0.5cm}}}\
\mathring{\Lag}_{r-1}^1(\Fct)\
&&\stackrel{\curl_\F}{\xrightarrow{\hspace*{0.5cm}}}\
\mathring{V}_{r-2}^2(\Fct)\
&&\xrightarrow{\hspace*{0.5cm}}\
 0.\label{2dbdryseq2}
 %
 \end{alignat}
 \end{subequations}
 \end{thm}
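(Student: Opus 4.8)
The plan is to split the four sequences into two ``Nédélec-type'' ones, \eqref{alfseq1} and \eqref{2dbdryseq1}, and two ``smoother'' Stokes-type ones, \eqref{alfseq2} and \eqref{2dbdryseq2}, to obtain the former from standard finite element exterior calculus and the latter from the former by a diagram chase. For \eqref{alfseq1}, note that $\Lag_r^0(\Fct)$, $V^1_{\curl,r-1}(\Fct)$, and $V^2_{r-2}(\Fct)$ are precisely the two-dimensional Lagrange, (full-polynomial, tangentially continuous) Nédélec, and fully discontinuous spaces of degrees $r$, $r-1$, $r-2$ that make up the $\mathcal{P}_r\Lambda^\bullet$ subcomplex of the grad–curl de Rham complex, here assembled on the triangulation $\Fct$ of the contractible domain $F$. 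Its exactness, and that of the homogeneous version \eqref{2dbdryseq1} (whose last space carries the mean-value constraint forced by $\int_F \curl_\F v = \oint_{\partial F} v\cdot\bt$ for $v$ with vanishing tangential trace), is classical and holds for \emph{any} conforming triangulation of a contractible planar domain, independently of the Clough--Tocher structure; I would simply invoke \cite{FE2006}.

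For the smooth sequences I would chase the diagram formed by the inclusions $S_r^0(\Fct)\subset \Lag_r^0(\Fct)$ and $\Lag_{r-1}^1(\Fct)\subset V^1_{\curl,r-1}(\Fct)$ together with the identity on $V^2_{r-2}(\Fct)$. The complex property is immediate from $\curl_\F\,\Grad_\F=0$ and the defining inclusion $\Grad_\F\,S_r^0(\Fct)\subseteq \Lag_{r-1}^1(\Fct)$. Exactness at the first slot is just $\ker(\Grad_\F)=\mathbb{R}$ on the connected domain $F$ (and $\{0\}$ in the homogeneous case). Crucially, exactness at the middle slot comes \emph{for free} from the Nédélec row: if $v\in \Lag_{r-1}^1(\Fct)$ has $\curl_\F v=0$, then, regarding $v\in V^1_{\curl,r-1}(\Fct)$ and using exactness of \eqref{alfseq1}, there is $\phi\in \Lag_r^0(\Fct)$ with $\Grad_\F\phi=v$; since $\Grad_\F\phi=v\in \Lag_{r-1}^1(\Fct)$, the very definition of $S_r^0(\Fct)$ forces $\phi\in S_r^0(\Fct)$, whence $v\in \Grad_\F\,S_r^0(\Fct)$. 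The homogeneous case \eqref{2dbdryseq2} is identical, run against \eqref{2dbdryseq1} with $\mathring{\Lag}$, $\mathring{S}$ in place of $\Lag$, $S$.

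What remains, and what I expect to be the main obstacle, is surjectivity of $\curl_\F\colon \Lag_{r-1}^1(\Fct)\to V^2_{r-2}(\Fct)$ (respectively onto $\mathring V^2_{r-2}(\Fct)$), the one step that genuinely uses the Clough--Tocher split. I would argue constructively: given $q$ in the target, exactness of the Nédélec row yields $w\in V^1_{\curl,r-1}(\Fct)$ with $\curl_\F w=q$, where $w$ is only tangentially continuous across the interior edges of $\Fct$. Since the gradient $\Grad_\F\psi$ of a continuous $\psi\in \Lag_r^0(\Fct)$ is curl-free and has continuous tangential trace but may carry an arbitrary jump in its normal component (that of $\partial_n\psi$), one chooses $\psi$ so that $v:=w-\Grad_\F\psi$ has vanishing normal jumps as well, hence lies in $\Lag_{r-1}^1(\Fct)$ while preserving $\curl_\F v=q$. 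The solvability of this normal-jump--matching problem across the three interior edges meeting at $m_\F$ is exactly the low-degree $C^1$ interpolation property that the Clough--Tocher geometry is designed to provide.

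Equivalently, and perhaps more cleanly for bookkeeping, once the two earlier exactness statements are in hand this final surjectivity is equivalent to the Euler-characteristic identity $\dim S_r^0(\Fct)-\dim \Lag_{r-1}^1(\Fct)+\dim V^2_{r-2}(\Fct)=1$ (and its homogeneous analogue with $\mathring S_r^0(\Fct)$ and $\mathring V^2_{r-2}(\Fct)$). Thus the entire difficulty collapses onto the dimension count for the $C^1$ Clough--Tocher space $S_r^0(\Fct)$ and its boundary-conditioned variant, which is the Clough--Tocher-specific input and which I would take from \cite{arnold1992quadratic,FuGuzman}.
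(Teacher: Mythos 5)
Your proposal is sound, but note first that the paper does not actually prove Theorem~\ref{2dseqs}: the statement is quoted outright from \cite{arnold1992quadratic,FuGuzman}, so your argument is a genuine proof outline where the paper offers only a citation. What your route buys is a precise localization of how much of the theorem is Clough--Tocher-specific. Exactness of \eqref{alfseq1} and \eqref{2dbdryseq1} is indeed classical finite element exterior calculus, valid on any triangulation of a contractible planar domain, and your middle-slot argument for \eqref{alfseq2} and \eqref{2dbdryseq2} is clean and correct: if $\curl_\F v=0$ for $v\in\Lag^1_{r-1}(\Fct)\subset V^1_{\curl,r-1}(\Fct)$, then $v=\Grad_\F\phi$ by the N\'ed\'elec row, and $\phi\in S^0_r(\Fct)$ \emph{automatically}, since $S^0_r(\Fct)$ is defined exactly by the condition $\Grad_\F\phi\in\Lag^1_{r-1}(\Fct)$; this step uses nothing about the split. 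Everything then collapses onto surjectivity of $\curl_\F:\Lag^1_{r-1}(\Fct)\to V^2_{r-2}(\Fct)$ (resp.\ onto $\mathring{V}^2_{r-2}(\Fct)$), which by rank-nullity is equivalent to the dimension formulas $\dim S^0_r(\Fct)=\tfrac32(r^2-r+2)$ and $\dim\mathring{S}^0_r(\Fct)=\tfrac32(r-2)(r-3)$ for the $C^1$ Clough--Tocher spline spaces --- classical spline-theoretic facts (cf.\ \cite{lai2007spline}) that you may legitimately import. Two caveats. First, your constructive alternative --- correcting $w\in V^1_{\curl,r-1}(\Fct)$ by a gradient $\Grad_\F\psi$ to kill the normal jumps across the three interior edges --- is not a proof as written: the solvability of that jump-matching problem \emph{is} the nontrivial Clough--Tocher fact, so the dimension-count route is the one to keep. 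Second, beware of circularity: the dimensions in Table~\ref{tab:2DDim} of this paper are themselves derived \emph{from} Theorem~\ref{2dseqs} via rank-nullity, so the spline dimension counts must come from an independent source, as you propose, and not from the paper itself.
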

Theorem \ref{2dseqs} has an alternate form that follows from a rotation of the coordinate axes, where the operators ${\rm grad}_F$ and $\curl_F$ are replaced by ${\rm rot}_F$ and ${\rm div}_F$, respectively.

\begin{cor}\label{cor:rotdiv}
Let \rev{$r \geq 3$}. The following sequences are exact \cite{arnold1992quadratic,FuGuzman}.
\begin{subequations}
\begin{alignat}{4}
&\mathbb{R}\
{\xrightarrow{\hspace*{0.5cm}}}\
{{\Lag}}_{r}^0(\Fct)\
&&\stackrel{{\rm rot}_\F}{\xrightarrow{\hspace*{0.5cm}}}\
{{V}}_{{\rm div},r-1}^1(\Fct)\
&&\stackrel{{\rm div}_\F}{\xrightarrow{\hspace*{0.5cm}}}\
{{V}}_{r-2}^2(\Fct)\
&&\xrightarrow{\hspace*{0.5cm}}\
 0,\label{altalfseq1}\\
 &\mathbb{R}\
{\xrightarrow{\hspace*{0.5cm}}}\
{S}_{r}^0(\Fct)\
&&\stackrel{{\rm rot}_\F}{\xrightarrow{\hspace*{0.5cm}}}\
{\Lag}_{r-1}^1(\Fct)\
&&\stackrel{{\rm div}_\F}{\xrightarrow{\hspace*{0.5cm}}}\
{V}_{r-2}^2(\Fct)\
&&\xrightarrow{\hspace*{0.5cm}}\
 0,\label{altalfseq2}\\
&0\
{\xrightarrow{\hspace*{0.5cm}}}\
{\mathring{\Lag}}_{r}^0(\Fct)\
&&\stackrel{{\rm rot}_\F}{\xrightarrow{\hspace*{0.5cm}}}\
{\mathring{V}}_{\Div,r-1}^1(\Fct)\
&&\stackrel{{\rm div}_\F}{\xrightarrow{\hspace*{0.5cm}}}\
{\mathring{V}}_{r-2}^2(\Fct)\
&&\xrightarrow{\hspace*{0.5cm}}\
 0,\label{alt2dbdryseq1}\\
 &0\
{\xrightarrow{\hspace*{0.5cm}}}\
\mathring{S}_{r}^0(\Fct)\
&&\stackrel{{\rm rot}_\F}{\xrightarrow{\hspace*{0.5cm}}}\
\mathring{\Lag}_{r-1}^1(\Fct)\
&&\stackrel{{\rm div}_\F}{\xrightarrow{\hspace*{0.5cm}}}\
\mathring{V}_{r-2}^2(\Fct)\
&&\xrightarrow{\hspace*{0.5cm}}\
  0.\label{alt2dbdryseq2}
 \end{alignat}
 \end{subequations}
\end{cor}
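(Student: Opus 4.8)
The plan is to deduce each rot--div sequence from the corresponding grad--curl sequence of Theorem~\ref{2dseqs} by transporting it through the in-plane rotation $v\mapsto v^\perp$ of Definition~\ref{def:Vperp}. The essential point is that $\perp$ intertwines the two pairs of surface operators: directly from the definitions of $\Grad_\F$, $\rot_\F$, and $\perp$ one has $\rot_\F\phi=(\Grad_\F\phi)^\perp$ for every scalar $\phi$, while the identity $\DivF v^\perp=\curl_\F v'$ in~\eqref{iden:Vperp} gives $\DivF\circ\perp=\curl_\F$ on tangential fields. Consequently $\perp$ sits as the middle vertical arrow of a commuting ladder whose top row is a grad--curl sequence and whose bottom row is the sought rot--div sequence, the scalar spaces at either end being joined by the identity map. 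Since $\rot_\F v=(\Grad_\F v)^\perp$ and $\perp$ is bijective on $\Lag_{r-1}^1(\Fct)$, the constraint $\Grad_\F v\in\Lag_{r-1}^1(\Fct)$ defining $S_r^0(\Fct)$ is equivalent to $\rot_\F v\in\Lag_{r-1}^1(\Fct)$, so the same space $S_r^0(\Fct)$ is the correct domain in~\eqref{altalfseq2} and~\eqref{alt2dbdryseq2}.

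Next I would check that $\perp$ is a linear isomorphism between the finite element spaces appearing in the two ladders. Because $v\mapsto v^\perp$ only permutes and negates the two tangential components, it preserves $[\cP_r(F)]^2$ on each triangle of $\Fct$ and hence restricts to an isomorphism of $\Lag_{r-1}^1(\Fct)$ onto itself. Being a rigid $90^\circ$ rotation of the tangent plane, it exchanges tangential and normal directions (up to sign); thus it carries $H(\curl_\F,F)$ onto $H(\DivF,F)$ and therefore $V_{\curl,r-1}^1(\Fct)$ onto $V_{{\rm div},r-1}^1(\Fct)$. The same exchange of components sends a field with vanishing tangential trace to one with vanishing normal trace, so $\perp$ maps the homogeneous spaces $\mathring{V}_{\curl,r-1}^1(\Fct)$ onto $\mathring{V}_{{\rm div},r-1}^1(\Fct)$ as well.

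With these isomorphisms established, exactness transfers purely formally. Injectivity of $\perp$ together with $\rot_\F=\perp\circ\Grad_\F$ gives $\ker\rot_\F=\ker\Grad_\F$, which is $\mathbb{R}$ in the non-homogeneous sequences and $\{0\}$ in the homogeneous ones. From $\DivF\circ\perp=\curl_\F$ and the bijectivity of $\perp$ one gets $\ker\DivF=\perp(\ker\curl_\F)$ and $\operatorname{im}\rot_\F=\perp(\operatorname{im}\Grad_\F)$, so exactness $\operatorname{im}\Grad_\F=\ker\curl_\F$ of the top row forces $\operatorname{im}\rot_\F=\ker\DivF$; and surjectivity of $\DivF=\curl_\F\circ\perp^{-1}$ onto $V_{r-2}^2(\Fct)$ follows from that of $\curl_\F$. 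Applying this argument to~\eqref{alfseq1},~\eqref{alfseq2},~\eqref{2dbdryseq1}, and~\eqref{2dbdryseq2} yields~\eqref{altalfseq1},~\eqref{altalfseq2},~\eqref{alt2dbdryseq1}, and~\eqref{alt2dbdryseq2}, respectively. The one step demanding care---the main, if modest, obstacle---is the verification that $\perp$ swaps the homogeneous boundary conditions correctly, so that the vanishing-tangential-trace spaces map precisely onto the vanishing-normal-trace spaces; once this bookkeeping is confirmed no further computation is needed.
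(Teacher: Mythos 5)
Your proposal is correct and takes essentially the same approach as the paper: the paper handles Corollary~\ref{cor:rotdiv} with the single remark that it is the ``alternate form'' of Theorem~\ref{2dseqs} obtained by a rotation of the coordinate axes, which is exactly the conjugation by the in-plane rotation $v\mapsto v^\perp$ that you carry out in detail, including the key bookkeeping that $\perp$ intertwines $(\Grad_\F,\curl_\F)$ with $(\rot_\F,\DivF)$ and exchanges vanishing tangential traces with vanishing normal traces. Your write-up simply makes explicit what the paper leaves implicit.
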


\subsection{Elasticity complexes}
In order to construct elasticity sequences in three dimensions, 
we need some elasticity complexes on the two-dimensional Clough-Tocher splits. 
{The main results of this section are very similar to the ones found \cite{christiansen2022finite} (with spaces slightly different)  and can be proved with the techniques there}. However, 
to be self-contained, we provide the \rev{proof of the main result, Theorem~\ref{thm:2delaseq} in an appendix}. 
\rev{Let $\bV_2$ denote the plane $n^\perp$ where $n$ is a unit normal to $\Fct$; clearly $\bV_2$ is isomorphic to $\bbR^2$. Then the} two-dimensional elasticity complexes
utilize these:
\begin{subequations}
\begin{alignat}{1}
    & \mathring{Q}_{{\rm inc},r}^1(\Fct) :=\{v\in \mathring{\Lag}_r^1(\Fct) \otimes \bV_2:\   \curl_\F v \in {\mathring{V}_{{\rm curl},r-1}^1(\Fct)} \}, \\
    \label{eqn:Qr0b}
   & \mathring{Q}_{{\rm inc},r}^{1,s}(\Fct) := \{\sym(u): u \in \mathring{Q}_{{\rm inc}, r}^1(\Fct) \}, \\
   \label{eqn:Qr1}
& Q^1_{r} (\Fct) := \{u \in V_{{\rm div}, r}^1(\Fct) \otimes \bV_2: {\rm skew}(u) = 0\}, \\
\label{eqn:Qr1tilde}
 & \Tilde{Q}^1_{r} (\Fct):= \{u \in \Lag_{r}^1(\Fct) \otimes \bV_2: {\rm skew}(u) = 0\} \subset Q^1_{r}(\Fct), \\
  \label{eqn:Qr2b}
& \mathring{Q}^2_{r} (\Fct) := \{u \in V^2_{r}(\Fct): u \perp \cP_{1}(F)\}.
\end{alignat}
\end{subequations}
We further let $Q_r^{\perp}$ be the subspace
of $Q_r^1(\Fct)$ that is $L^2(F)$-orthogonal to $\Tilde{Q}^1_r(\Fct)$.
We then have $Q_r^1(\Fct) = Q_r^\perp \oplus \tilde{Q}^1_r(\Fct)$, 
and
\begin{equation} \label{eqn:}
    \dim Q_r^{\perp} = \dim Q^1_{r}(\Fct) - \dim \Tilde{Q}^1_{r}(\Fct).
\end{equation}

\begin{lemma}[Lemma 5.8 in \cite{christiansen2020discrete}]\label{lem:incFiden}
Let $u$ be a sufficiently smooth matrix-valued function,
and let $\phi$ be a smooth scalar-valued function.
Then there holds the following integration-by-parts identity:
\begin{equation}\label{eqn:incF}
    \int_{F} (\incF \, u) \, \phi = \int_{F} u \colon \airy(\phi) + \int_{\partial F} (\curl_\F u)\bt \, \phi ds + \int_{\partial F} u \bt \cdot (\rot_\F \phi)'.
\end{equation}
Consequently, if $u \in \mathring{Q}_{{\rm inc},r-1}^{1}(\Fct)$ is symmetric and $\phi \in \cP_1(F)$, 
then $\int_{F} (\incF \, u) \, \phi =0$.
\end{lemma}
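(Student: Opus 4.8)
The plan is to establish the integration-by-parts identity \eqref{eqn:incF} by integrating by parts twice, peeling off one at a time the two surface curls that compose $\incF = \curl_\F\,\tau\,\curl_\F$. Introduce the (tangential) vector field $w := \tau\,\curl_\F u = (\curl_\F u)'$, so that $\incF u = \curl_\F w$ is a scalar. First I would apply the scalar--vector Green's identity for the surface curl,
\begin{equation*}
\int_F (\curl_\F w)\,\phi = \int_F w\cdot\rot_\F\phi + \int_{\partial F}(w\cdot\bt)\,\phi\,ds,
\end{equation*}
which follows from the planar divergence theorem together with the definitions of $\curl_\F$ and $\rot_\F$ recorded in Section \ref{subsec:Matrix}. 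Recognizing $w\cdot\bt = (\curl_\F u)\bt$ on $\partial F$ already yields the first boundary term in \eqref{eqn:incF}.

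For the remaining volume integral $\int_F (\curl_\F u)'\cdot\rot_\F\phi$ I would integrate by parts a second time, now using the matrix-valued analogue of the same Green's identity with $\curl_\F$ acting on $u$ tested against the vector field $\rot_\F\phi$. This transfers the surface curl from $u$ onto $\rot_\F\phi$, producing the volume contribution $\int_F u : \rot_\F(\rot_\F\phi)' = \int_F u : \airy(\phi)$ (by the definition $\airy = \rot_\F(\rot_\F)'$) together with the second boundary term $\int_{\partial F} u\bt\cdot(\rot_\F\phi)'$. Summing the two steps reproduces \eqref{eqn:incF} exactly.

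For the stated consequence I would show that all three terms on the right vanish under the hypotheses. Since $\phi\in\cP_1(F)$ is affine, its Hessian is zero and hence $\airy(\phi)$, a second-order operator applied to $\phi$, vanishes identically, killing the volume term. The first boundary integral vanishes because $u\in\mathring{Q}_{{\rm inc},r-1}^{1}(\Fct)$ forces $\curl_\F u\in\mathring{V}_{{\rm curl},r-2}^1(\Fct)\subset\mathring{H}(\curl_\F,F)$, so that $(\curl_\F u)\bt = 0$ on $\partial F$. The second boundary integral vanishes because $\mathring{Q}_{{\rm inc},r-1}^{1}(\Fct)\subset\mathring{\Lag}_{r-1}^1(\Fct)\otimes\bV_2$ imposes $u|_{\partial F} = 0$, whence $u\bt = 0$ on $\partial F$. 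Consequently $\int_F(\incF u)\,\phi = 0$.

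The main obstacle is the matrix/vector bookkeeping across the two integration-by-parts steps: one must place the transposes correctly so that the inner matrix-curl Green's identity produces precisely the pairing $u:\airy(\phi)$ and the boundary density $u\bt\cdot(\rot_\F\phi)'$ (rather than their transposes), and one must confirm that both boundary conditions used at the end---the vanishing of $u$ and of the tangential trace of $\curl_\F u$ on $\partial F$---are genuinely encoded in the definition of $\mathring{Q}_{{\rm inc},r-1}^{1}(\Fct)$.
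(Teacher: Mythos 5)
Your proof is correct, and since the paper itself offers no proof of this lemma --- it simply defers to Lemma~5.8 of \cite{christiansen2020discrete} --- your self-contained derivation is a legitimate substitute for the citation. The structure is the natural one: peel the outer scalar curl off $w=(\curl_\F u)'$ with the scalar--vector Green identity, then peel the inner (row-wise) matrix curl off $u$ against the field $\rot_\F\phi$. Your transpose bookkeeping is consistent with the conventions of Section~\ref{subsec:Matrix}: the second step does produce the volume pairing $u:\rot_\F(\rot_\F\phi)'=u:\airy(\phi)$ and the boundary density $u\bt\cdot(\rot_\F\phi)'$, and the identification $w\cdot\bt=(\curl_\F u)\bt$ gives the first boundary term. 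The verification of the vanishing conditions is also right: $\mathring{Q}_{{\rm inc},r-1}^{1}(\Fct)\subset\mathring{\Lag}_{r-1}^1(\Fct)\otimes\bV_2$ gives $u|_{\partial F}=0$, the constraint $\curl_\F u\in\mathring{V}_{{\rm curl},r-2}^1(\Fct)$ gives $(\curl_\F u)\bt|_{\partial F}=0$, and $\airy(\phi)=0$ for affine $\phi$; note the symmetry hypothesis is never actually needed for this conclusion.

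One point to tighten: the identity \eqref{eqn:incF} is proved for smooth $u$, but in the ``consequently'' step you apply it to a function that is only piecewise polynomial on $\Fct$. Strictly, one should apply \eqref{eqn:incF} on each triangle $Q\in \Fct$ and sum; the integrals over the interior edges of $\Fct$ then cancel in pairs because $u$ is single-valued and $\curl_\F u$ has single-valued tangential component across those edges, while the edge tangent $\bt$ reverses orientation between the two adjacent triangles. These are exactly the two continuity properties you already invoke on $\partial F$, so no new ingredient is required, but the element-wise summation is what licenses using the smooth identity on $\mathring{Q}_{{\rm inc},r-1}^{1}(\Fct)$ (it also confirms that $\incF u$ is a genuine $L^2$ function rather than a distribution supported on $\Delta_1^I(\Fct)$). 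The paper's statement glosses over the same point, so this is a refinement rather than a flaw in your argument.
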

%

The next theorem is the main result
of \rev{this} section, where exact local discrete elasticity complexes
are presented on Clough-Tocher splits.
Its proof is given in \rev{Appendix \ref{apdix:sec3}}.

\begin{thm} \label{thm:2delaseq}
    Let $r \ge 3$. The following elasticity sequences are exact.
\begin{alignat}{4}
&0 \
{\xrightarrow{\hspace*{0.5cm}}}\
\mathring{{S}}_{r+1}^0(\Fct) \otimes \bV_2 \
&&\stackrel{{\varepsilon}_\F}{\xrightarrow{\hspace*{0.5cm}}}\
\mathring{Q}_{{\rm inc},r}^{1,s}(\Fct)\
&&\stackrel{\incF}{\xrightarrow{\hspace*{0.5cm}}}\
\mathring{Q}^2_{r-2} (\Fct) \
&&\xrightarrow{\hspace*{0.5cm}}\
 0,\label{elaseqsvenb}
\end{alignat} 
\begin{alignat}{4}
&\cP_1(F)\
\stackrel{\subset}{\xrightarrow{\hspace*{0.5cm}}}\
{{S}}_{r}^0(\Fct)\
&&\stackrel{\airy}{\xrightarrow{\hspace*{0.5cm}}}\
Q^1_{r-2}(\Fct)\
&&\stackrel{\DivF}{\xrightarrow{\hspace*{0.5cm}}}\
V^2_{r-3}(\Fct) \otimes \bV_2 \
&&\xrightarrow{\hspace*{0.5cm}}\
 0.\label{elaseqairy}
\end{alignat}
\end{thm}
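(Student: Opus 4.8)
The plan is to derive both sequences from the discrete de~Rham complexes of Theorem~\ref{2dseqs} and Corollary~\ref{cor:rotdiv} through the BGG recipe of Proposition~\ref{prop:exactpattern}, and then to pin down exactness with a dimension count together with the integration-by-parts identity of Lemma~\ref{lem:incFiden}.

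I would begin by checking that \eqref{elaseqsvenb} and \eqref{elaseqairy} are complexes whose operators map into the indicated target spaces. For \eqref{elaseqairy}, $\DivF\circ\airy=0$ is exactly identity \eqref{2didenela1}; every $\phi\in\cP_1(F)$ lies in $\ker\airy$ because its surface Hessian vanishes; and $\airy\phi$ is symmetric with vanishing $\DivF$ of the claimed piecewise degree, so it lands in $Q^1_{r-2}(\Fct)$. For \eqref{elaseqsvenb}, $\incF\circ\varepsilon_\F=0$ is identity \eqref{2didenela2}, the symmetric-gradient image is symmetric and inherits the homogeneous boundary conditions of $\mathring{S}^0_{r+1}(\Fct)\otimes\bV_2$, and Lemma~\ref{lem:incFiden} shows $\int_F(\incF u)\,\phi=0$ for every $\phi\in\cP_1(F)$ when $u$ is symmetric with vanishing traces; hence $\incF$ maps into the $\cP_1(F)$-orthogonal space $\mathring{Q}^2_{r-2}(\Fct)$.

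For exactness I would align two de~Rham rows — the smooth row (scalar, based on $S_r^0(\Fct)$, for \eqref{elaseqairy}; $\bV_2$-valued for \eqref{elaseqsvenb}) with a second $\bV_2$-valued row of the appropriate polynomial gradation, e.g.\ \eqref{altalfseq2} paired with a $\bV_2$-valued copy of \eqref{altalfseq1} for \eqref{elaseqairy}, and the boundary-conditioned \eqref{2dbdryseq2}, \eqref{2dbdryseq1} for \eqref{elaseqsvenb} — and connect them by diagonal maps $s_i$ assembled from the algebraic operators ${\rm skew}$, $\sym$, $\tau$ of Definition~\ref{def:maps}. Commutativity of the resulting squares is the anticommutation identity \eqref{2didenela3}, $\curl_\F\,{\rm skew}=\tau\,\Grad_\F$ (in its $\rot_\F$/$\DivF$ form). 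The structural fact that in two dimensions the skew-symmetric matrices are isomorphic to scalars is what makes the relevant connection map $s_1$ a bijection; Proposition~\ref{prop:exactpattern}(1) then produces an exact derived complex whose interior map is exactly $\airy$ (resp.\ $\varepsilon_\F$) and whose middle space is exactly the symmetric constrained space $Q^1_{r-2}(\Fct)$ (resp.\ $\mathring{Q}^{1,s}_{r}(\Fct)$), while Proposition~\ref{prop:exactpattern}(2), fed by the surjectivity of the terminal de~Rham maps and the compatibility $s_2t_1=r_2s_1$, gives surjectivity of the final $\DivF$ (resp.\ $\incF$).

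I would close any residual gaps with a dimension count: using the dimensions of the component spaces (computable from the underlying N\'ed\'elec and Lagrange spaces), verify that the alternating sum along each sequence is zero, so that exactness at the middle node follows once injectivity at the start and surjectivity at the end are known. Injectivity is immediate, since $\ker\varepsilon_\F=\Rig(F)$ and the only rigid motion with vanishing trace is $0$, while $\ker\airy=\cP_1(F)$ by the Hessian computation above. I expect the main obstacle to be the precise identification in the BGG step: showing that the middle space produced by Proposition~\ref{prop:exactpattern} coincides with the \emph{constrained} discrete space, carrying the correct conformity ($H(\DivF)$ versus $H(\curl_\F)$), the correct piecewise degrees on the three Clough-Tocher triangles, and the correct boundary behaviour, and that $\incF$ (resp.\ $\DivF$) is surjective onto its constrained target rather than merely onto a larger space. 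This matching of the symmetric/skew splitting with the two de~Rham gradations — where Lemma~\ref{lem:incFiden} supplies the $\cP_1(F)$-orthogonality at the terminal space — is more delicate here than on an unsplit face and is the step I would write out in full.
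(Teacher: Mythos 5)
Your high-level strategy is the paper's: both sequences are obtained by feeding pairs of discrete de~Rham rows, joined by algebraic diagonal maps, into Proposition~\ref{prop:exactpattern}, with Lemma~\ref{lem:incFiden} supplying the $\cP_1(F)$-orthogonality at the terminal space; and for \eqref{elaseqairy} your row choice --- \eqref{altalfseq2} over a $\bV_2$-valued copy of \eqref{altalfseq1} --- is exactly the paper's diagram \eqref{eqn:2ddiagramairy}. The genuine gap is in \eqref{elaseqsvenb}. In Proposition~\ref{prop:exactpattern} the bijection must be $s_1:B_1\to A_2$, i.e.\ it identifies the \emph{second} space of the bottom row with the \emph{third} space of the top row; in both of the paper's diagrams $s_1$ is the transpose $\tau$, while the skew map occupies the positions $s_0$ and $s_2$, so your appeal to ``2D skew matrices are scalars'' does not produce the needed bijection. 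Concretely, with \eqref{2dbdryseq1} as bottom row ($B_1=\mathring{V}^1_{\curl,r-1}(\Fct)$) and a $\bV_2$-valued copy of \eqref{2dbdryseq2} on top (so $A_2=\mathring{V}^2_{r-1}(\Fct)\otimes\bV_2$), the map $\tau:B_1\to A_2$ is injective but far from surjective: the source is tangentially continuous with boundary conditions, the target fully discontinuous. This is precisely why the paper needs Corollary~\ref{cor:gradcurl~sven}: the top row must be the \emph{smoother} vector-valued sequence whose middle space is $\mathring{Q}^1_{{\rm inc},r}(\Fct)$ (matrix fields whose row-wise $\curl_\F$ lies in $\mathring{V}^1_{\curl,r-1}(\Fct)$), so that its third space coincides with $\tau(B_1)$; the exactness of this refined row must then be proved separately. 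Your proposal never identifies this space, and without it Proposition~\ref{prop:exactpattern} simply does not apply to \eqref{elaseqsvenb}.

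Two further corrections. First, Proposition~\ref{prop:exactpattern} never outputs the constrained symmetric spaces in the middle: the derived complexes are \eqref{eqn:2dpreelaairy} and \eqref{eqn:2dpreelasvenb}, whose middle spaces are the \emph{full} spaces $V^1_{{\rm div},r-1}(\Fct)\otimes\bV_2$ and $V^2_{r-2}(\Fct)$, and whose interior maps are the second-order composites $t_1\circ s_1^{-1}\circ r_1$, namely $\airy=\rot_\F\,\tau\,\rot_\F$ and $\incF=\curl_\F\,\tau\,\curl_\F$; the first-order operator $\varepsilon_\F$ cannot arise as such a composite, contrary to your claim. The passage to \eqref{elaseqairy} and \eqref{elaseqsvenb} is a separate algebraic step: $Q^1_{r-2}(\Fct)$ is the kernel of ${\rm skew}$ inside the BGG middle space, surjectivity of $\DivF$ on it comes from hitting pairs $(0,v)$ with the last block map, and for \eqref{elaseqsvenb} one uses $v\perp\cP_1(F)\Leftrightarrow\int_\F v=\int^{\perp}_\F v=0$ to solve $\incF\,u=v$ with $u\in\mathring{Q}^1_{{\rm inc},r}(\Fct)$ and then replaces $u$ by $\sym(u)$ via \eqref{2didenela2}, while exactness at $\mathring{Q}^{1,s}_{{\rm inc},r}(\Fct)$ uses $z=\Grad_\F w+{\rm skew}\,s\Rightarrow\sym(z)=\varepsilon_\F(w)$. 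Second, your dimension-count fallback is circular: the dimension of $Q^1_r(\Fct)$ recorded in Table~\ref{tab:2DDim} is itself \emph{deduced} from Theorem~\ref{thm:2delaseq}, so it is not available as an independent input for establishing exactness at the middle nodes.
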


\subsection{Dimension counts}
We summarize the dimension counts of the 
discrete spaces on the Clough-Tocher split in Table 
\ref{tab:2DDim} which will be used in the construction 
elasticity complex in three dimensions.
These dimensions are mostly found in \cite{guzman2022exact}
and follow from Theorem~\ref{2dseqs}
and the rank-nullity theorem. Likewise, the dimension
of $Q^1_r(F^{ct})$ follows from Theorem~\ref{thm:2delaseq}.

\begin{table}[ht]
\caption{\label{tab:2DDim}Dimension counts of the canonical (two--dimensional) N\'ed\'elec, Lagrange, and smooth spaces 
with respect to the Clough--Tocher split.  Here, $\dim V_{{\rm div},r}^1(\Fct) = \dim V_{\curl,r}^1(\Fct) =:\dim V_r^1(\Fct)$}
{\scriptsize 
\begin{tabular}{c|ccc}
& $k=0$ & $k=1$ & $k=2$\\
\hline
$\dim V_r^k(\Fct)$ & \rev{---}  & $3(r+1)^2$ & $\frac{3}{2}(r+1)(r+2)$\\
$\dim \mathring{V}_r^k(\Fct)$ &  \rev{---}  & $3r(r+1)$ & $\frac{3}{2}(r+1)(r+2) - 1$\\
$\dim \Lag_r^k(\Fct)$ & $ \frac{1}{2}(3r^2 + 3r + 2)$ & $3r^2 + 3r + 2$ & $\frac{1}{2}(3r^2 + 3r + 2)$\\
$\dim \mathring{\Lag}_r^k(\Fct)$ & $\frac{1}{2}(3r^2 - 3r + 2)$ & $3r^2 - 3r + 2$ & $\frac{3}{2}r(r-1)$\\
$\dim S_r^k(\Fct)$ & $ \frac{3}{2}(r^2 - r +2)$ & \rev{---}  & \rev{---} \\
%
%
$\dim \mathcal{R}_{r}^k(\Fct)$ & $\frac{3}{2}(r-1)(r-2)$ \cite{ALphd} &
---
&--- \\
$\dim Q_{r}^k(\Fct)$ & --- &
$\frac{3}{2}(3r^2 +5r+2)$
&---
\end{tabular}
}
\end{table}

\section{Local discrete sequences on Worsey-Farin splits} \label{sec:localcomplex}
\subsection{de Rham complexes}\label{subsec:localDeRhamy}
Similar to the two-dimensional setting in Section \ref{sec:2dela},
the starting point to construct discrete 3D elasticity complexes
are the de Rham complexes consisting of piecewise polynomial spaces.
The N\'ed\'elec spaces with respect to the local Worsey-Farin split $\WFT$
are given as
\begin{alignat*}{2}
&V_r^1(\WFT):=[\cP_r(\WFT)]^3 \cap H(\curl,T),\qquad
&& \mathring{V}_r^1(\WFT):= V_r^1(\WFT) \cap \mathring{H}(\curl,T),\\
&V_r^2(\WFT):=[\cP_r(\WFT)]^3 \cap H({\rm div},T),\qquad
&&\mathring{V}_r^2(\WFT):= V_r^2(\WFT) \cap \mathring{H}({\rm div},T),\\
 &V_r^3(\WFT):=\cP_r(\WFT), &&\mathring{V}_r^3(\WFT):=V_r^3(\WFT)\cap L^2_0(T).
\end{alignat*}
The Lagrange spaces on $\WFT$ are defined by
\begin{alignat*}{2}
    & \Lag_{r}^0(\WFT) := \cP_r(\WFT) \cap H^1(T),\qquad
    && \mathring{\Lag}_{r}^0(\WFT) := \Lag_r^0(\WFT) \cap \mathring{H}^1(T), \\
    & \Lag_{r}^1(\WFT) := [\Lag_{r}^0(\WFT)]^3, && \mathring{\Lag}_{r}^1(\WFT) := [\mathring{\Lag}_{r}^0(\WFT)]^3, \\
    & \rev{\Lag_{r}^2(\WFT):= \Lag_{r}^1(\WFT),} && \rev{\mathring{\Lag}_{r}^2(\WFT) :=\mathring{\Lag}_{r}^1(\WFT),}
\end{alignat*}
and the discrete spaces with additional smoothness
are 
\begin{alignat*}{2}
     & S_{r}^0(\WFT):=\{u \in \Lag_{r}^0(\WFT): \Grad \, u \in \Lag_{r-1}^1(\WFT)\},\\
     & \mathring{S}_{r}^0(\WFT):=\{u \in \mathring{\Lag}_{r}^0(\WFT): \Grad \, u \in \mathring{\Lag}_{r-1}^1(\WFT)\}, &&\\
     & S_{r}^1(\WFT):=\{u \in \Lag_{r}^1(\WFT): \bcurl u \in \Lag_{r-1}^1(\WFT)\},\\
     & \mathring{S}_{r}^1(\WFT):=\{u \in \mathring{\Lag}_{r}^1(\WFT): \bcurl u \in \mathring{\Lag}_{r-1}^1(\WFT)\}.
\end{alignat*}
We also define the intermediate spaces
\begin{align*}
    & {\cV}_r^2(\WFT):=\{v \in V_r^2(\WFT): \rev{v \times \bn|_F} \text{ is continuous on each } F \in \Delta_2(T)\}, \\
    & \mathring{\cV}_r^2(\WFT) := \{v \in \cV_r^2(\WFT): \rev{v \cdot \bn|_F}=0 \text{ on each } F \in \Delta_2(T)\}, \\
    & {\cV}_r^3(\WFT):=\{q \in V_r^3(\WFT): \rev{q|_F} \text{ is continuous on each } F \in \Delta_2(T)\}, \\
    & \mathring{\cV}_r^3 := \cV_r^3(\WFT) \cap L_0^2(T).
\end{align*}
and note that
\begin{alignat*}{2}
&S_r^0(\WFT) \subset \Lag^0_r(\WFT),\qquad
&& \rev{S_r^1(T^{wf})}\subset \Lag^1_r(\WFT)\subset V_r^1(\WFT),\\
&\Lag^2_r(\WFT)\subset \cV_r^2(\WFT)\subset V_r^2(\WFT),\quad
&&\cV_r^3(\WFT)\subset V_r^3(\WFT),
\end{alignat*}
with similar inclusions holding for the analogous spaces
with boundary conditions.

The next lemma summarizes
the exactness properties
of several (local) complexes using these spaces.
Its proof is found in 
 \cite[Theorem 3.1-3.2]{guzman2022exact}.
 \begin{lemma}\label{lem:localseq}
The following sequences are exact for any $r \ge 3$.
\begin{subequations}
\begin{equation} \label{eqn:seq0}
     \mathbb{R}  \xrightarrow{\subset} \Lag_{r}^0(\WFT) \xrightarrow{\Grad} V_{r-1}^1(\WFT) \xrightarrow{\bcurl} V_{r-2}^2(\WFT) \xrightarrow{\Div} V_{r-3}^3(\WFT) \rightarrow 0,
 \end{equation}
 \begin{equation} \label{eqn:seq0b}
     0  \rightarrow \mathring{\Lag}_{r}^0(\WFT) \xrightarrow{\Grad} \mathring{V}_{r-1}^1(\WFT) \xrightarrow{\bcurl} \mathring{V}_{r-2}^2(\WFT) \xrightarrow{\Div} \mathring{V}_{r-3}^3(\WFT) \rightarrow 0,
 \end{equation}
\begin{equation} \label{eqn:seq1}
     \mathbb{R}  \xrightarrow{\subset} S_{r}^0(\WFT) \xrightarrow{\Grad} \Lag_{r-1}^1(\WFT) \xrightarrow{\bcurl} V_{r-2}^2(\WFT) \xrightarrow{\Div} V_{r-3}^3(\WFT) \rightarrow 0,
 \end{equation}
 \begin{equation} \label{eqn:seq1b}
     0  \rightarrow \mathring{S}_{r}^0(\WFT) \xrightarrow{\Grad} \mathring{\Lag}_{r-1}^1(\WFT) \xrightarrow{\bcurl} \mathring{\cV}_{r-2}^2(\WFT) \xrightarrow{\Div} \mathring{V}_{r-3}^3(\WFT) \rightarrow 0.
 \end{equation}
 \begin{equation} \label{eqn:seq2}
     \mathbb{R}  \xrightarrow{\subset} S_{r}^0(\WFT) \xrightarrow{\Grad} S_{r-1}^1(\WFT) \xrightarrow{\bcurl} \Lag_{r-2}^2(\WFT) \xrightarrow{\Div} V_{r-3}^3(\WFT) \rightarrow 0.
 \end{equation}
 \begin{equation} \label{eqn:seq2b}
    0 \rightarrow \mathring{S}_{r}^0(\WFT) \xrightarrow{\Grad} \mathring{S}_{r-1}^1(\WFT) \xrightarrow{\bcurl} \mathring{\Lag}_{r-2}^2(\WFT) \xrightarrow{\Div} \mathring{\cV}_{r-3}^3(\WFT) \rightarrow 0.
 \end{equation}
 \end{subequations}
\end{lemma}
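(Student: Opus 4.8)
The plan is to treat all six sequences uniformly in three stages: verify each is a complex, establish exactness at the two ends, and then reduce interior exactness to inputs that are either classical or already available in the excerpt. The complex property is immediate from $\bcurl\,\Grad=0$ and $\Div\,\bcurl=0$ together with the definitions of the smooth spaces $S_r^0,S_r^1,\cV_r^2,\cV_r^3$, which are engineered precisely so that the image of each operator lands in its (smoother) target: for instance $\Grad\,S_r^0(\WFT)\subset\Lag_{r-1}^1(\WFT)$ and $\bcurl\,S_{r-1}^1(\WFT)\subset\Lag_{r-2}^2(\WFT)$ hold by construction, and the continuity built into $\cV_r^2,\cV_r^3$ is exactly what makes the images of $\bcurl$ and $\Div$ land there in \eqref{eqn:seq1b} and \eqref{eqn:seq2b}. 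Left-exactness is the statement that a piecewise polynomial on the connected macroelement $T$ with vanishing gradient is constant (and zero under the boundary condition), and right-exactness is surjectivity of $\Div$ onto $V_{r-3}^3(\WFT)$ and its variants, which I would inherit from the surjectivity already present in the lowest-smoothness complex \eqref{eqn:seq0}/\eqref{eqn:seq0b}.

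The backbone of the argument is that \eqref{eqn:seq0} and \eqref{eqn:seq0b} are the classical finite element de Rham complexes on the simplicial mesh $\WFT$ (full-degree $H(\curl)$, $H(\Div)$, and $L^2$ spaces), hence exact on the contractible macroelement $T$; for \eqref{eqn:seq0b} the relative de Rham cohomology of a ball is trivial except at top degree, which is precisely why the last space carries the mean-zero constraint $L^2_0(T)$. I would then bootstrap from these to the smoother sequences \eqref{eqn:seq1}, \eqref{eqn:seq1b}, \eqref{eqn:seq2}, \eqref{eqn:seq2b} by constructing short exact sequences of complexes linking consecutive smoothness levels. The difference between $V_{r-1}^1(\WFT)$ and $\Lag_{r-1}^1(\WFT)$, or between $V_{r-2}^2(\WFT)$ and $\cV_{r-2}^2(\WFT)$, is carried entirely by interface traces supported on the internal faces of $\WFT$ and on the interior Clough--Tocher edges, measured by the jumps $\jmp{\cdot}_e$ and by $\theta_e$ from Section~\ref{subsec:WFconstruct}. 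These trace spaces decompose as a direct sum over the faces $F\in\Delta_2(T)$, and on each $F$ the resulting trace complex is exactly one of the two-dimensional Clough--Tocher de Rham sequences of Theorem~\ref{2dseqs} and Corollary~\ref{cor:rotdiv}. Feeding the 2D exactness through the connecting (snake) maps transfers exactness from the rough complex to the smooth one at each interior slot. The two-stage description of the Worsey--Farin refinement (the Alfeld split $T^a$ followed by the face splits) is what lets me separate the genuinely three-dimensional content from the two-dimensional face contribution.

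As an independent consistency check and to pin down any residual cohomology, I would compute the dimensions of all spaces by applying the rank--nullity theorem to the already-verified exact fragments and confirm that the alternating sum vanishes for each sequence; together with the endpoint exactness and the complex property this rules out spurious cohomology. The homogeneous sequences \eqref{eqn:seq0b}, \eqref{eqn:seq1b}, \eqref{eqn:seq2b} are handled in parallel, now invoking the homogeneous 2D complexes \eqref{2dbdryseq1}--\eqref{2dbdryseq2} (equivalently \eqref{alt2dbdryseq1}--\eqref{alt2dbdryseq2}) for the face traces, and using contractibility of $T$ to guarantee that the relative cohomology is concentrated in the top slot and removed by the $L^2_0$ constraint on $\mathring{V}_{r-3}^3(\WFT)$.

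I expect the principal obstacle to lie at the middle node of the smoothest sequences \eqref{eqn:seq2}/\eqref{eqn:seq2b}: showing that every $\Div$-free field in $\Lag_{r-2}^2(\WFT)$ is the $\bcurl$ of some $S_{r-1}^1(\WFT)$ potential. This demands a vector potential that is simultaneously globally $H^1$-conforming \emph{and} satisfies the extra regularity $\bcurl v\in\Lag_{r-2}^1(\WFT)$, and the only place the construction can fail is the matching of the potential's traces across the internal Worsey--Farin faces, which meet along the interior Clough--Tocher edges. Controlling these edge jumps is where the degree restriction $r\ge 3$ becomes essential, since it guarantees that the 2D Clough--Tocher spaces of Theorem~\ref{2dseqs} carry enough interior freedom to correct the traces, and it is where the identities of Lemma~\ref{lem:iden} relating the three-dimensional $\bcurl$ to the surface operators $\curl_\F,\rot_\F$ do the decisive bookkeeping.
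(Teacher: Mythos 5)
You should first know that the paper does not prove Lemma \ref{lem:localseq} at all: it is imported wholesale from \cite[Theorems 3.1--3.2]{guzman2022exact}. So your proposal is competing with that reference, and while your overall strategy (treat \eqref{eqn:seq0}--\eqref{eqn:seq0b} as classical, then transfer exactness to the smoother sequences by relating the smoothness increments to two-dimensional Clough--Tocher complexes) is in the right spirit, it has a genuine gap at exactly the point where the real work lies. Your claim that right-exactness is ``inherited from the surjectivity already present in \eqref{eqn:seq0}/\eqref{eqn:seq0b}'' is correct only for \eqref{eqn:seq1}, whose divergence map acts on the same space $V^2_{r-2}(\WFT)$; it is false as an argument for \eqref{eqn:seq1b}, \eqref{eqn:seq2} and \eqref{eqn:seq2b}. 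Surjectivity of $\Div$ from the big space $V^2_{r-2}(\WFT)$ says nothing about surjectivity of $\Div$ from the strictly smaller spaces $\Lag^2_{r-2}(\WFT)$ or $\mathring{\cV}^2_{r-2}(\WFT)$. That the divergence of \emph{continuous} piecewise polynomial fields fills out all of $V^3_{r-3}(\WFT)$ is a Stokes/Scott--Vogelius-type theorem whose validity hinges on the particular geometry of Worsey--Farin splits (the singular structure of the interior Clough--Tocher edges); it is exactly this mechanism that forces the constrained spaces $\cV^2_r$ and $\cV^3_r$ to appear in \eqref{eqn:seq1b} and \eqref{eqn:seq2b}, a fact your proposal never explains. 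A proof that does not engage with why $\bcurl\,\mathring{\Lag}^1_{r-1}(\WFT)$ lands in, and together with the divergence constraint exhausts, $\mathring{\cV}^2_{r-2}(\WFT)$ is missing the core of the lemma.

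Two further steps do not hold up as stated. The dimension-count ``consistency check'' is circular: the dimensions of $S^0_r(\WFT)$, $S^1_r(\WFT)$, $\cV^2_r(\WFT)$, $\cV^3_r(\WFT)$ in Table \ref{tab:VLDim} are themselves obtained \emph{from} Lemma \ref{lem:localseq} by rank-nullity (the paper says so explicitly), so they cannot serve as independent input; computing, say, $\dim S_r^0(\WFT)$ from scratch is a nontrivial spline-theoretic result, not a bookkeeping exercise. And the snake-lemma reduction rests on the unproven assertion that the quotient (trace) complexes split as direct sums over $F\in\Delta_2(T)$ of the Clough--Tocher complexes of Theorem \ref{2dseqs}. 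That is doubtful as stated: the smoothness increments between consecutive sequences --- e.g.\ between $V^1_{r-1}(\WFT)$ and $\Lag^1_{r-1}(\WFT)$, or between $V^2_{r-2}(\WFT)$ and $\Lag^2_{r-2}(\WFT)$ --- are carried by jumps across the \emph{internal} faces of the split, which meet along interior edges and are therefore coupled, whereas the complexes of Theorem \ref{2dseqs} live on the boundary faces of $T$. Identifying the correct quotient complexes and producing the extension operators that make the trace maps surjective is essentially the content of the cited proof; it cannot be passed off as a decomposition that holds ``exactly.''
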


\subsection{Dimension counts}
The dimensions of the spaces in Section \ref{subsec:localDeRhamy}
are summarized in Table \ref{tab:VLDim}.
These counts essentially from Lemma \ref{lem:localseq} and the rank-nullity theorem;
see \cite{guzman2022exact} for details.
\begin{table}[ht]
\caption{\label{tab:VLDim}Dimension counts of the canonical N\'ed\'elec, Lagrange spaces and smoother spaces on a WF split. Here \rev{$a^+ = \max(a, 0)$.}}
{\scriptsize 
\begin{tabular}{c|cccc}
& $k=0$ & $k=1$ & $k=2$ & $k=3$\\
\hline
$V_r^k(\WFT)$ & $(2r+1)(r^2+r+1)$ & $2 (r + 1) (3 r^2 + 6 r + 4)$ & $3 (r + 1) (r + 2) (2 r + 3)$ & $2 ( r+1) (r+2) (r+3)$\\
$\mathring{V}_r^k(\WFT)$ & $(2r-1)(r^2-r+1)$ & $ 2 (r + 1) (3 r^2 + 1)$ & $3 (r+1) (r+2) (2r+1)$ & $2 r^3 + 12 r^2 + 22 r + 11$\\
$\Lag_r^k(\WFT)$ & $(2r+1)(r^2+r+1)$ & $3(2r+1)(r^2+r+1)$ & $3(2r+1)(r^2+r+1)$ & $(2r+1)(r^2+r+1)$\\
$ \mathring{\Lag}_r^k(\WFT)$ & $(2r-1)(r^2-r+1)$ & $3 (2 r - 1) (r^2 - r + 1)$ & $ 3 (2 r - 1) (r^2 - r + 1)$ & $(r - 1) (2 r^2 - r + 2)$ \\
$\mathring{\cV}_r^k(\WFT)$ & --- & --- & $6r^3 + 21r^2 + 9r + 2$ & {$2r^3 +12r^2+10r+3$}\\
$S_r^k(\WFT)$ & $ 2r^3-6r^2+10r-2$ & $3r(2r^2-3r+5)$ & $6r^3+8r+2$ & $(2r+1)(r^2+r+1)$\\
$\mathring{S}_r^0(\WFT)$ & $\big(2(r-2)(r-3)(r-4)\big)^+$ & $\big(3(2r-3)(r-2)(r-3)\big)^+$ & $\big(2(r-2)(3r^2-6r+4)\big)^+$ & $(r-1)(2r^2-r+2)$
\end{tabular}
}
\end{table}



\subsection{Elasticity complex for stresses with weakly imposed symmetry}
In this section we will apply Proposition \ref{prop:exactpattern} to
the de-Rham sequences on Worsey-Farin splits. This gives rise to a
derived complex useful for analyzing mixed methods for elasticity with
weakly imposed stress symmetry. From this intermediate step, an
elasticity sequence with strong symmetry will readily follow.
We start with the following definition and lemma.

\begin{definition} \label{def:mu}
    Let $\mu \in \mathring{\Lag}_1^0(\WFT)$
    be the unique continuous, piecewise linear polynomial that vanishes on $\partial T$ and takes the value $1$ at the incenter of $T$.
\end{definition}

\begin{lemma} \label{lem:mapproperty}\  
    \begin{enumerate}
    \item  The map $\Xi : \Lag_{r}^1(\WFT) \otimes \bV \rightarrow \Lag_{r}^2(\WFT)\otimes \bV $ is a bijection. 
    \item The following inclusions hold $\vskw \, (V_{r-2}^2(\WFT) \otimes \bV) \subset V_{r-2}^3(\WFT) \otimes \bV$ and \\
    $\vskw \, (\mathring{\cV}_{r-2}^2(\WFT) \otimes \bV) \subset \cV_{r-2}^3(\WFT) \otimes \bV$, \rev{for any $r \ge 3$}. 
    \item The mappings $\vskw : V^2_{r-2}(\WFT) \otimes \bV \rightarrow V^3_{r-2}(\WFT) \otimes \bV$ and $\vskw : \mathring{\cV}^2_{r-2}(\WFT) \otimes \bV \rightarrow \cV^3_{r-2}(\WFT) \otimes \bV$ are both surjective, \rev{for any $r \ge 3$}.
      \end{enumerate}
\end{lemma}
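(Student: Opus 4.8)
The plan is to treat the three parts in increasing order of difficulty, exploiting throughout that $\Xi$ and $\vskw$ act \emph{algebraically} (pointwise) on the matrix value, so that the only genuine issue is whether such a pointwise operation preserves the interelement conformity that defines the spaces involved.

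For part (1), I would first note that both $\Lag_r^1(\WFT)\otimes\bV$ and $\Lag_r^2(\WFT)\otimes\bV$ coincide, as sets, with the space of $3\times 3$ matrix fields whose entries lie in $\Lag_r^0(\WFT)$, i.e.\ continuous piecewise polynomials of degree $\le r$. Since $\Xi M = M'-\tr(M)\mathbb{I}$ only transposes entries and subtracts a multiple of the trace, it maps such a matrix field to another one with entries in $\Lag_r^0(\WFT)$, and the same holds for $\Xi^{-1}M = M'-\tfrac12\tr(M)\mathbb{I}$. As $\Xi\Xi^{-1}=\Xi^{-1}\Xi=\mathrm{id}$ pointwise on $\bM_{3\times 3}$, the induced map on the matrix-field space is a bijection with inverse induced by $\Xi^{-1}$, which settles (1).

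For part (2), observe that $\vskw=\mskw^{-1}\circ\skw$ is again a fixed linear algebraic operation on the matrix value (explicitly $\vskw(u)=\tfrac12(u_{32}-u_{23},\,u_{13}-u_{31},\,u_{21}-u_{12})$), so it never raises the polynomial degree and sends a piecewise-$\cP_{r-2}$ matrix field to a piecewise-$\cP_{r-2}$ vector field; since $V_{r-2}^3(\WFT)\otimes\bV=[\cP_{r-2}(\WFT)]^3$ carries no continuity constraint, the first inclusion is immediate. For the second inclusion I would use that a field in $\cV_{r-2}^2(\WFT)$ has both continuous normal trace (from $H(\Div,T)$) and continuous tangential trace $v\times\bn_\F$ on each $F\in\Delta_2(T)$, hence a single-valued full trace on each such $F$. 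Consequently every entry of $u\in\mathring{\cV}_{r-2}^2(\WFT)\otimes\bV$ has a continuous trace on $F$, and the algebraic combination $\vskw(u)$ inherits this continuity, placing it in $\cV_{r-2}^3(\WFT)\otimes\bV$.

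The substance of the lemma is the surjectivity in part (3), where the obstacle is precisely the $H(\Div)$-conformity that $\vskw$ discards. My strategy is to use that $\vskw(u)$ depends only on $\skw(u)$: given a target $q\in V_{r-2}^3(\WFT)\otimes\bV$, I would set $u=\mskw(q)+s$, where $\mskw(q)$ is the (elementwise well defined but generally nonconforming) skew field with $\vskw(\mskw(q))=q$, and $s$ is a symmetric piecewise-$\cP_{r-2}$ correction to be chosen; since $\vskw(s)=0$, any admissible $s$ gives $\vskw(u)=q$. The remaining requirement is that $u$ have continuous normal trace, i.e.\ $\jmp{s}\bn=-\jmp{\mskw(q)}\bn=-\jmp{q}\times\bn$ on each interior face of $\WFT$ (using $\mskw(w)\bn=w\times\bn$); note the prescribed jump is tangential. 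Because this data arises from the elementwise field $q$, it is consistent (a coboundary) and can be realized by sweeping over a spanning tree of the dual graph of $\WFT$ and solving, element by element, a prescribed-normal-trace problem for symmetric matrix polynomials. The delicate point, and what I expect to be the crux, is that prescribing the normal trace of a symmetric $\cP_{r-2}$ matrix field on the faces of a tetrahedron carries edge-compatibility constraints on the tangential components; I would discharge these using the ample polynomial freedom available for $r\ge 3$ (so that $r-2\ge 1$) together with the coboundary structure of the data. For the conforming variant $\vskw:\mathring{\cV}_{r-2}^2(\WFT)\otimes\bV\to\cV_{r-2}^3(\WFT)\otimes\bV$ I would run the same construction while additionally requiring $s$ to respect continuity across the macro-faces $F$, which is consistent with the target $\cV_{r-2}^3(\WFT)$ and with part (2). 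Finally, a rank--nullity check against Table~\ref{tab:VLDim}, comparing $3\dim V_{r-2}^2(\WFT)$, the dimension of the symmetric kernel $\{u:\skw(u)=0\}$, and $3\dim V_{r-2}^3(\WFT)$, would confirm independently that the constructed map is onto.
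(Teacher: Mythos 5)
Your parts (1) and (2) are correct in substance and correspond to what the paper dismisses as ``trivial to verify'': $\Xi$ and $\vskw$ act pointwise, and the rows of a field in $\mathring{\cV}_{r-2}^2(\WFT)\otimes\bV$ have single-valued traces on each $F\in\Delta_2(T)$ because the tangential trace is continuous by definition of $\cV^2$ and the normal trace \emph{vanishes} by the ring condition. (One small correction: the continuity of the normal trace on $F$ does not come ``from $H(\Div,T)$'' as you assert---$H(\Div,T)$ only controls jumps across interior faces of the split, not across the Clough--Tocher edges inside a boundary face $F$; for the ring space this does not matter since that trace is zero.)

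The genuine gap is in part (3), which is the entire content of the lemma. Your reduction---write $u=\mskw(q)+s$ with $s$ symmetric, so that surjectivity becomes the existence of a \emph{symmetric} piecewise $\cP_{r-2}$ field $s$ with prescribed jumps $\jmp{s}\bn_f=-\jmp{q}\times\bn_f$ across all interior faces $f$ of $\WFT$ (plus, in the ring case, prescribed boundary values and macro-face continuity)---is valid as a reformulation, but the step you label ``the crux'' is then asserted rather than proved. Constructing $H(\Div)$-conforming \emph{symmetric} matrix fields with prescribed traces is precisely the notorious difficulty of symmetric stress elements that this paper's whole BGG framework is built to avoid, and your spanning-tree sweep does not resolve it: every subtetrahedron of $\WFT$ has three interior faces, so away from the root the local problems prescribe data on several faces meeting at edges (with the attendant compatibility couplings for symmetric fields), and the $18-11=7$ interior faces not in the tree carry jump conditions that are not enforced by the sweep; the ``coboundary'' structure of the data is a coboundary in the space of \emph{general} matrix fields (realized by $-\mskw(q)$), and whether it is a coboundary in the space of \emph{symmetric} piecewise polynomials is exactly the statement being proved. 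Your fallback rank--nullity check is circular: it requires knowing $\dim\{u\in V_{r-2}^2(\WFT)\otimes\bV:\skw\,u=0\}=\dim U_{r-2}^2(\WFT)$, which the paper computes \emph{from} the surjectivity of $\vskw$, not independently. The paper's own proof sidesteps all trace constructions: by exactness of \eqref{eqn:seq2} (index shifted by one) there is $z\in\Lag_{r-1}^2(\WFT)\otimes\bV$ with $\Div z=v$; setting $w=\bcurl(\Xi^{-1}z)\in V_{r-2}^2(\WFT)\otimes\bV$ and invoking \eqref{eqn:iden1} gives $2\vskw(w)=\Div\Xi(\Xi^{-1}z)=v$. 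The ring case is identical after the correction $\tilde w=\mu M$ (Definition \ref{def:mu}), which fixes the mean value so that the exactness of \eqref{eqn:seq2b} and \eqref{eqn:seq1b} applies---a compatibility issue your sketch does not address at all. In short: the statement you need is true, but your proposal defers its only nontrivial step, and the efficient route is the one through the smoother de Rham complexes of Lemma \ref{lem:localseq} and the identity \eqref{eqn:iden1}.
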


\begin{proof}

    
    Both (1) and (2) are trivial to verify and hence we only prove (3). 
    \rev{For any $r \ge 3$, let} $v \in V^3_{r-2}(\WFT) \otimes \bV$. By the exactness of \eqref{eqn:seq2}, there exists a function $z \in \Lag_{r-2}^2(\WFT)\otimes \bV$ such that $\Div z = v$. Since $\Xi$ is a bijection from $ \Lag_{r-2}^1(\WFT) \otimes \bV$ to $\Lag_{r-2}^2(\WFT)\otimes \bV$, we have $q= \Xi^{-1} z \in \Lag_{r-2}^1(\WFT) \otimes \bV$.  Thus, by setting 
    $w = \bcurl q \in V_{r-2}^2(\WFT) \otimes \bV$ we obtain  
    \[
    2 \vskw (w) = 2 \vskw \, \bcurl (q)= 2 \vskw \, \bcurl (\Xi^{-1} z ) = \Div \Xi (\Xi^{-1} z) = v,
    \]
    where we used \eqref{eqn:iden1}.  We conclude
    $\vskw : V^2_{r-2}(\WFT) \otimes \bV \rightarrow V^3_{r-2}(\WFT) \otimes \bV$ is a surjection.

    We now prove the analogous result with boundary condition. Let $v \in \cV^3_{r-2}(\WFT) \otimes \bV$, and let $M \in \mathbb{M}_{3\times 3}$ be a constant matrix
    such that $\int_{T} 2 \vskw \, M = \frac{1}{ \int_T \mu}{\int_T v}$. 
    Then, by taking $\tilde{w}= \mu M$, we have  $\Tilde{w} \in \mathring{\cV}_{1}^2(\WFT) \otimes \bV$ with $\int_{T} 2 \vskw \, \Tilde{w} = \int_T v$. Therefore, we have $v-2\,\vskw(\Tilde{w}) \in \mathring{\cV}^3_{r-2}(\WFT) \otimes \bV$ and the exactness of \eqref{eqn:seq2b} 
    yields the existence of $z \in \mathring{\Lag}_{r-1}^2(\WFT) \otimes \bV$, such that $\Div z = v- 2\vskw(\Tilde{w})$. Let $q = \Xi^{-1} z \in \mathring{\Lag}_{r-1}^1(\WFT) \otimes \bV$, and from \eqref{eqn:seq1b}, we have $w:=\bcurl(q) + \Tilde{w} \in \mathring{\cV}_{r-2}^2(\WFT) \otimes \bV$. Finally, using \eqref{eqn:iden1} 
    \begin{align*}
        2\vskw(w)= 2\vskw \, \bcurl(\Xi^{-1}z)+2\vskw(\Tilde{w}) =  \Div z +2\vskw(\Tilde{w}) =v.
    \end{align*}
    This shows the surjectivity of $\vskw : \mathring{\cV}^2_{r-2}(\WFT) \otimes \bV \rightarrow \cV^3_{r-2}(\WFT) \otimes \bV$, thus completing the proof.
\end{proof}

Using the complexes \eqref{eqn:seq1}-\eqref{eqn:seq2b} and the two identities \eqref{eqn:iden1}-\eqref{eqn:iden2}, we construct the following commuting diagrams:
{ 
\begin{equation}\label{eqn:diagram1}
    \begin{tikzcd}
 S_{r+1}^0(\WFT) \!\otimes\! \bV \arrow{r}{\Grad} & S_{r}^1(\WFT) \!\otimes\! \bV \arrow{r}{\bcurl} & \Lag_{r-1}^2(\WFT) \!\otimes\! \bV \arrow{r}{\Div} & V_{r-2}^3(\WFT) \!\otimes\! \bV  \arrow{r} & [-1em] 0
 \\ 
S_{r}^0(\WFT) \!\otimes\! \bV \arrow{r}{\Grad}  \arrow[swap]{ur}{-\mskw} & \Lag_{r-1}^1(\WFT) \!\otimes\! \bV \arrow{r}{\bcurl} \arrow[swap]{ur}{\Xi} & V_{r-2}^2(\WFT) \!\otimes\! \bV \arrow{r}{\Div} \arrow[swap]{ur}{2\vskw}& V_{r-3}^3(\WFT) \!\otimes\! \bV \arrow{r} &[-1em] 0,
\end{tikzcd}
\end{equation}
}
\begin{equation}\label{eqn:diagram1b}
    \begin{tikzcd}
 \mathring{S}_{r+1}^0(\WFT) \!\otimes\! \bV \arrow{r}{\Grad} & \mathring{S}_{r}^1(\WFT) \!\otimes\! \bV \arrow{r}{\bcurl} & \mathring{\Lag}_{r-1}^2(\WFT) \!\otimes\! \bV \arrow{r}{\Div} & \cV_{r-2}^3(\WFT) \!\otimes\! \bV \arrow{r}{\int} & [-1em]\rev{\bbR} \\ 
\mathring{S}_{r}^0(\WFT) \!\otimes\! \bV \arrow{r}{\Grad}  \arrow[swap]{ur}{-\mskw} & \mathring{\Lag}_{r-1}^1(\WFT) \!\otimes\! \bV \arrow{r}{\bcurl} \arrow[swap]{ur}{\Xi} & \mathring{\cV}_{r-2}^2(\WFT) \!\otimes\! \bV \arrow{r}{\Div} \arrow[swap]{ur}{2\vskw}& \mathring{V}_{r-3}^3(\WFT) \!\otimes\! \bV \arrow{r} & [-1em]0.
\end{tikzcd}
\end{equation}
Note that the top sequence of \eqref{eqn:diagram1b} is slightly different from \eqref{eqn:seq2b}, as the mean-value constraint
is not imposed on $\cV_{r-2}(\WFT)\otimes \mathbb{V}$.
This is due to the surjective property of the mapping
$\vskw: (\mathring{\cV}_{r-2}^2(\WFT) \otimes \bV) \to \cV_{r-2}^3(\WFT) \otimes \bV$
established in Lemma \ref{lem:mapproperty}.

\begin{thm} \label{thm:preseq}
    The following sequences are exact for any $r \ge 3$:
    \begin{equation} \label{eqn:preseq}
            \begin{bmatrix}
            S_{r+1}^0(\WFT) \!\otimes\! \bV \\
            S_{r}^0(\WFT \!\otimes\! \bV)
            \end{bmatrix}
            \xrightarrow{[\Grad, -\mskw]} S_{r}^1(\WFT) \!\otimes\! \bV \xrightarrow{\inc} {V}_{r-2}^2(\WFT) \!\otimes\! \bV \xrightarrow{
              \left[\begin{smallmatrix}
                  2\vskw \\ \Div
                \end{smallmatrix}\right]} 
            \begin{bmatrix}
              V^3_{r-2}(\WFT) \!\otimes\! \bV \\ {V}_{r-3}^3(\WFT) \!\otimes\! \bV
            \end{bmatrix}.
    \end{equation}
    
    \begin{equation} \label{eqn:preseqb}
            \begin{bmatrix}
            \mathring{S}_{r+1}^0(\WFT) \otimes \bV \\
            \mathring{S}_{r}^0(\WFT \otimes \bV)
            \end{bmatrix}
            \xrightarrow{[\Grad, -\mskw]} \mathring{S}_{r}^1(\WFT) \otimes \bV \xrightarrow{\inc} \mathring{\cV}_{r-2}^2(\WFT) \otimes \bV \xrightarrow{ \left[\begin{smallmatrix}
            2\vskw \\ \Div \end{smallmatrix}\right]} 
            \begin{bmatrix}
              \cV^3_{r-2}(\WFT) \otimes \bV \\ \mathring{V}_{r-3}^3(\WFT) \otimes \bV
            \end{bmatrix}.
    \end{equation}
Moreover, the last operator in \eqref{eqn:preseq} is surjective.
\end{thm}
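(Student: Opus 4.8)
The plan is to realize both sequences as the derived complex of Proposition~\ref{prop:exactpattern} applied to the commuting diagrams \eqref{eqn:diagram1} and \eqref{eqn:diagram1b}. Reading off \eqref{eqn:diagram1}, I take the top-row spaces $A_0,\dots,A_3 = S_{r+1}^0\otimes\bV,\ S_r^1\otimes\bV,\ \Lag_{r-1}^2\otimes\bV,\ V_{r-2}^3\otimes\bV$ with $r_0,r_1,r_2=\Grad,\bcurl,\Div$; the bottom-row spaces $B_0,\dots,B_3 = S_r^0\otimes\bV,\ \Lag_{r-1}^1\otimes\bV,\ V_{r-2}^2\otimes\bV,\ V_{r-3}^3\otimes\bV$ with $t_0,t_1,t_2=\Grad,\bcurl,\Div$; and the diagonals $s_0,s_1,s_2=-\mskw,\Xi,2\vskw$. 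With these identifications $[r_0\ s_0]=[\Grad,-\mskw]$, $\left[\begin{smallmatrix}s_2\\t_2\end{smallmatrix}\right]=\left[\begin{smallmatrix}2\vskw\\\Div\end{smallmatrix}\right]$, and the middle composite $t_1\circ s_1^{-1}\circ r_1=\bcurl\,\Xi^{-1}\bcurl=\inc$, so the derived complex \eqref{eq:1} is precisely \eqref{eqn:preseq}; the ringed diagram \eqref{eqn:diagram1b} produces \eqref{eqn:preseqb} in the same fashion.

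For \eqref{eqn:preseq} I would verify the three hypotheses of Proposition~\ref{prop:exactpattern}(1). Exactness of the two rows is Lemma~\ref{lem:localseq}: the top and bottom rows are \eqref{eqn:seq2} (with $r\mapsto r+1$) and \eqref{eqn:seq1}, each tensored with $\bV$. Commutativity of the diagram amounts to the two relations $r_1 s_0=s_1 t_0$ and $r_2 s_1=s_2 t_1$, that is $-\bcurl\,\mskw=\Xi\Grad$ and $\Div\,\Xi=2\vskw\,\bcurl$, which are exactly the identities \eqref{eqn:iden2} and \eqref{eqn:iden1}. Finally $s_1=\Xi$ is a bijection by Lemma~\ref{lem:mapproperty}(1). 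Proposition~\ref{prop:exactpattern}(1) then gives exactness of \eqref{eqn:preseq} at its two interior spaces.

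For the surjectivity of $\left[\begin{smallmatrix}2\vskw\\\Div\end{smallmatrix}\right]$ asserted at the end, I would apply Proposition~\ref{prop:exactpattern}(2). Its hypotheses are immediate: $r_2=\Div$ and $t_2=\Div$ are surjective by the exactness of \eqref{eqn:seq2} and \eqref{eqn:seq1}; the vanishing of consecutive bottom-row maps, $\Div\,\bcurl=0$, is standard; and $s_2 t_1=r_2 s_1$ is again \eqref{eqn:iden1}. This yields surjectivity of the last operator.

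The step I expect to need the most care is the ringed sequence \eqref{eqn:preseqb}. The ingredients are the same---rows from \eqref{eqn:seq2b} and \eqref{eqn:seq1b}, the same commuting identities, and $\Xi$ a bijection on the ringed spaces (it acts pointwise and preserves vanishing traces)---but the top row must terminate in $\cV_{r-2}^3\otimes\bV$ rather than the mean-zero space $\mathring{\cV}_{r-2}^3\otimes\bV$ of \eqref{eqn:seq2b}, because $s_2=2\vskw$ only maps $\mathring{\cV}_{r-2}^2\otimes\bV$ into the larger $\cV_{r-2}^3\otimes\bV$ by Lemma~\ref{lem:mapproperty}(2). With this enlarged target $r_2=\Div$ fails to be surjective, so the top row is not exact at its final node. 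I would dispose of this by observing that the conclusion of Proposition~\ref{prop:exactpattern}(1) at the interior spaces $A_1$ and $B_2$ uses only interior exactness of the rows (at $A_1,A_2,B_1,B_2$), the commuting relations, and the bijectivity of $s_1$, and never surjectivity of $r_2$. Since enlarging the codomain of $\Div$ leaves $\ker\Div=\operatorname{im}\bcurl$ unchanged, the interior exactness of the top row is inherited verbatim from \eqref{eqn:seq2b}, and Proposition~\ref{prop:exactpattern}(1) applies to give the exactness of \eqref{eqn:preseqb}.
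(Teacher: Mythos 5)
Your proposal is correct and follows essentially the same route as the paper: both apply Proposition~\ref{prop:exactpattern} to the commuting diagrams \eqref{eqn:diagram1} and \eqref{eqn:diagram1b}, using the exactness of \eqref{eqn:seq1}--\eqref{eqn:seq2b} from Lemma~\ref{lem:localseq}, the identities \eqref{eqn:iden1}--\eqref{eqn:iden2} for commutativity, the bijectivity of $\Xi$ from Lemma~\ref{lem:mapproperty}, and Proposition~\ref{prop:exactpattern}(2) together with Lemma~\ref{lem:mapproperty} for the surjectivity claim. Your careful treatment of the ringed case---enlarging the final top-row space to $\cV_{r-2}^3(\WFT)\otimes\bV$ and noting that Proposition~\ref{prop:exactpattern}(1) only requires interior exactness of the rows, which is unaffected by enlarging the codomain of $\Div$---is exactly the subtlety the paper disposes of in the remark following \eqref{eqn:diagram1b}, so there is no gap.
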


\begin{proof}
Lemma \ref{lem:mapproperty} tells us that $\Xi : \Lag_{r-1}^1(\WFT) \otimes \bV \rightarrow \Lag_{r-1}^2(\WFT)\otimes \bV $ is a bijection. With the exactness of \eqref{eqn:seq1}-\eqref{eqn:seq2b} for $r\ge3$ and Proposition \ref{prop:exactpattern}, we see that these two sequences are exact. The surjectivity of the last map is guaranteed by Proposition \ref{prop:exactpattern} and Lemma \ref{lem:mapproperty}.
\end{proof}

\subsection{Elasticity sequence} \label{subsec:ela~seq}
Now we are ready to describe the local discrete elasticity 
sequence on Worsey-Farin splits. 
The discrete elasticity complexes with strong symmetry are formed by the following spaces:
    \begin{align*}
        & U_{r+1}^0(\WFT) = S_{r+1}^0(\WFT) \otimes \bV, && \mathring{U}_{r+1}^0(\WFT) = \mathring{S}_{r+1}^0(\WFT) \otimes \bV, \\
        & U_{r}^1(\WFT) = \{\sym(u): u \in S_{r}^1(\WFT) \otimes \bV\}, && \mathring{U}_{r}^1(\WFT) = \{\sym (u): u \in \mathring{S}_{r}^1(\WFT) \otimes \bV\}, \\
        & U_{r-2}^2(\WFT) = \{u \in V_{r-2}^2(\WFT) \otimes \bV: \skw \, u =0\}, && \mathring{U}_{r-2}^2(\WFT) = \{u \in \mathring{\cV}_{r-2}^2(\WFT) \otimes \bV: \skw \,  u =0\}, \\
        & U_{r-3}^3(\WFT) = V_{r-3}^3(\WFT) \otimes \bV, && \mathring{U}_{r-3}^3(\WFT) = \{u \in V_{r-3}^3(\WFT) \otimes \bV: u \perp \Rig \},
    \end{align*}
where we recall $\Rig$, defined in \eqref{eqn:rigid1}, is the space of rigid body displacements.

\begin{thm} \label{thm:elseq}
    The following two sequences are discrete elasticity complexes and are exact for $r \ge 3$: 
     \begin{equation}\label{eqn:elseq}
         \Rig  \rightarrow {U}_{r+1}^0(\WFT) \xrightarrow{\varepsilon} {U}_{r}^1(\WFT) \xrightarrow{\Inc} {U}_{r-2}^2(\WFT) \xrightarrow{\Div} {U}_{r-3}^3(\WFT) \rightarrow 0,
    \end{equation}
    and 
    \begin{equation}\label{eqn:elseqb}
         0  \rightarrow \mathring{U}_{r+1}^0(\WFT) \xrightarrow{\varepsilon} \mathring{U}_{r}^1(\WFT) \xrightarrow{\Inc} \mathring{U}_{r-2}^2(\WFT) \xrightarrow{\Div} \mathring{U}_{r-3}^3(\WFT) \rightarrow 0.
    \end{equation}
\end{thm}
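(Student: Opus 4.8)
The plan is to obtain both strongly symmetric complexes \eqref{eqn:elseq} and \eqref{eqn:elseqb} from the weakly symmetric complexes \eqref{eqn:preseq} and \eqref{eqn:preseqb} of Theorem~\ref{thm:preseq} by the symmetrization step of the BGG recipe. The bridge is purely algebraic: for any $w\in S_r^1(\WFT)\otimes\bV$, writing $w=\sym w+\mskw(\vskw w)$ and using $\inc\,\mskw=0$ from \eqref{eqn:iden3} together with $\inc\,\sym=\Inc\,\sym$ from \eqref{eqn:iden5} gives the key relation $\Inc(\sym w)=\inc w$. This lets me evaluate $\Inc$ on the symmetric space $U_r^1$ through the operator $\inc$ acting on the full space $S_r^1\otimes\bV$ that appears in Theorem~\ref{thm:preseq}. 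I would first record well-definedness and the complex property. Since $\Grad\big(S_{r+1}^0(\WFT)\otimes\bV\big)\subset S_r^1(\WFT)\otimes\bV$, the operator $\varepsilon=\sym\,\Grad$ maps $U_{r+1}^0$ into $U_r^1=\sym(S_r^1\otimes\bV)$; for $u=\sym w$ the matrix $\Inc u=\inc w$ is symmetric and lies in $V_{r-2}^2\otimes\bV$, hence in $U_{r-2}^2$; and $\Div$ maps $U_{r-2}^2$ into $U_{r-3}^3$. The relations $\Inc\,\varepsilon=0$ and $\Div\,\Inc=0$ then follow from $\inc\,\Grad=0$, $\inc\,\mskw=0$, and $\Div\,\inc=0$, which are exactly the statements that \eqref{eqn:preseq} is a complex.

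Next I would prove exactness at the two middle spaces by transferring to the weak complex. For $U_r^1$: given $u=\sym w$ with $\Inc u=0$, the relation $\inc w=\Inc(\sym w)=0$ lets me invoke exactness of \eqref{eqn:preseq} at $S_r^1\otimes\bV$ to write $w=\Grad w_0-\mskw w_1$; applying $\sym$ and using $\sym\,\mskw=0$ yields $u=\varepsilon w_0$ with $w_0\in U_{r+1}^0$. For $U_{r-2}^2$: given symmetric $u\in U_{r-2}^2$ with $\Div u=0$, symmetry gives $2\vskw\,u=0$, so $\big[\begin{smallmatrix}2\vskw\\\Div\end{smallmatrix}\big]u=0$; exactness of \eqref{eqn:preseq} at $V_{r-2}^2\otimes\bV$ produces $w\in S_r^1\otimes\bV$ with $u=\inc w=\Inc(\sym w)$ and $\sym w\in U_r^1$. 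The boundary versions are identical, using \eqref{eqn:preseqb} in place of \eqref{eqn:preseq}. Exactness at the first space is the classical kernel-of-strain fact: $\varepsilon w_0=0$ on the connected $T$ forces $w_0\in\Rig$ for \eqref{eqn:elseq}, and the additional homogeneous trace forces $w_0=0$ for \eqref{eqn:elseqb}. Surjectivity of the final map in \eqref{eqn:elseq} is then immediate: Theorem~\ref{thm:preseq} gives that $\big[\begin{smallmatrix}2\vskw\\\Div\end{smallmatrix}\big]$ is onto the full product, so the preimage of $(0,f)$ for $f\in U_{r-3}^3=V_{r-3}^3\otimes\bV$ is a symmetric $u\in U_{r-2}^2$ with $\Div u=f$.

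The main obstacle is surjectivity of $\Div:\mathring{U}_{r-2}^2\to\mathring{U}_{r-3}^3$ for the complex \eqref{eqn:elseqb}, because the last map of \eqref{eqn:preseqb} is \emph{not} surjective onto its product (its image is constrained by a rigid-motion coupling, reflected by the $\int$ arrow in \eqref{eqn:diagram1b}, so Proposition~\ref{prop:exactpattern}(2) does not apply). Here the codomain is genuinely $\mathring{U}_{r-3}^3=\{u\in V_{r-3}^3\otimes\bV:\ u\perp\Rig\}$, and I would argue directly, mirroring the proof of Lemma~\ref{lem:mapproperty}(3). Given $f\perp\Rig$ (hence mean-zero, so $f\in\mathring{V}_{r-3}^3\otimes\bV$): surjectivity of $\Div$ in \eqref{eqn:seq1b} yields $\sigma\in\mathring{\cV}_{r-2}^2\otimes\bV$ with $\Div\sigma=f$; integration by parts using $\sigma n|_{\partial T}=0$ together with the pointwise identity $2\vskw(\sigma)\cdot b=\sigma:\mskw(b)$ gives $\int_T 2\vskw(\sigma)\cdot b=-\int_T f\cdot(b\times x)=0$, so $g:=2\vskw\,\sigma\in\mathring{\cV}_{r-2}^3\otimes\bV$ is mean-zero; then, since $\Div:\mathring{\Lag}_{r-1}^2\otimes\bV\to\mathring{\cV}_{r-2}^3\otimes\bV$ is onto by \eqref{eqn:seq2b} and $\Xi$ is a (boundary-condition-preserving) bijection, I pick $z$ with $\Div z=g$, set $q=\Xi^{-1}z$, and obtain $\rho:=\bcurl q\in\mathring{\cV}_{r-2}^2\otimes\bV$ (via \eqref{eqn:seq1b}) with $\Div\rho=0$ and $2\vskw\,\rho=\Div\,\Xi q=g$ by \eqref{eqn:iden1}. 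Finally $u:=\sigma-\rho$ satisfies $\Div u=f$ and $2\vskw\,u=0$, so $u\in\mathring{U}_{r-2}^2$. The delicate point is precisely this $\vskw$–$\Div$ coupling under the boundary condition, which pins the codomain to $(\Rig)^\perp$ and which the two-step correction above resolves.
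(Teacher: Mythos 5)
Your proposal is correct and follows essentially the same route as the paper: both derive the strongly symmetric complexes from the weakly symmetric ones in Theorem~\ref{thm:preseq} via the identities \eqref{eqn:iden1}--\eqref{eqn:iden5}, handle exactness at the middle spaces by passing between $u=\sym w$ and $w$, and prove surjectivity of $\Div:\mathring{U}_{r-2}^2(\WFT)\to\mathring{U}_{r-3}^3(\WFT)$ by the same two-step correction (solve $\Div\sigma=f$ via \eqref{eqn:seq1b}, use $f\perp\Rig$ and integration by parts to show $2\vskw\,\sigma$ has zero mean, then subtract $\bcurl\,\Xi^{-1}z$ with $\Div z=2\vskw\,\sigma$ from \eqref{eqn:seq2b}). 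Your explicit remarks on exactness at the first space and on why Proposition~\ref{prop:exactpattern}(2) fails for \eqref{eqn:preseqb} are details the paper leaves implicit, but the argument is the same.
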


\begin{proof}
    We first show that \eqref{eqn:elseq} is a complex. In order to do this, it suffices 
    to show the operators map the space they are acting on into the subsequent space. 
    To this end, let $u \in U_{r+1}^0(\WFT)$, then by \eqref{eqn:seq2} we have $\Grad \,(u) \in S_r^1(\WFT) \otimes \bV$. Hence, $\varepsilon(u) = \sym \, \Grad \,(u) \in U_r^1(\WFT)$. Now let $u \in U_r^1(\WFT)$ which implies that $u = \sym(w)$ with $w \in S_r^1(\WFT) \otimes \bV$. Thus by \eqref{eqn:iden3} we have $\inc u = \inc w \in V_{r-2}^2(\WFT) \otimes \bV$ and $\skw(u) = 0$ due to \eqref{eqn:iden4}. Therefore, there holds $\inc (u) \in {U}_{r-2}^2(\WFT) $. Finally, for any $u \in {U}_{r-2}^2(\WFT) \subset V_{r-2}^2(\WFT) \otimes \bV$, $\Div u \in V_{r-3}^3(\WFT) \otimes \bV$. 

    Next, we prove exactness of the complex \eqref{eqn:elseq}. Let $w \in {U}_{r-3}^3(\WFT)$ and consider $(0,w) \in [V_{r-2}^3(\WFT) \otimes \bV] \times [V_{r-3}^3(\WFT) \otimes \bV]$.
    Due to the exactness of \eqref{eqn:preseq} in Theorem \ref{thm:preseq}, there exists $v \in V_{r-2}^2(\WFT) \otimes \bV$ such that $\Div v = w$ 
    and $2 \vskw(v)=0$. Thus, $v \in {U}_{r-2}^2(\WFT)$.  
    
    Now let $w \in {U}_{r-2}^2(\WFT)$ with $\Div w=0$.  Then by the exactness of \eqref{eqn:preseq}, we have the existence of $v \in S_{r}^1(\WFT) \otimes \bV$ such that $\inc v= w$.
    Setting $u=\sym(v) \in {U}_{r}^1(\WFT)$ yields $\Inc u=w$ by   \eqref{eqn:iden3}. 
    
    Finally, let $w \in {U}_{r}^1(\WFT)$ with $\Inc w=0$. Then $w=\sym(v)$ for some $v \in S_{r}^1(\WFT) \otimes \bV$ and with \eqref{eqn:iden3}, $\inc v=\inc w = 0$. Due to the exactness of \eqref{eqn:preseq}, we could find $(u,z) \in [S_{r+1}^0(\WFT) \otimes \bV] \times [S_{r}^0(\WFT) \otimes \bV]$ such that $v=\Grad \, u-\mskw(z)$. Therefore, $\varepsilon(u)=\sym(v)= w$.

    We can prove that \eqref{eqn:elseqb} is a complex and it is exact very similar to above. The main difference is the surjectivity of the last map which we prove now.  Let $w \in \mathring{U}_{r-3}^3(\WFT) \subset \mathring{V}_{r-3}^3 \otimes \bV$. Then by the exactness of \eqref{eqn:seq1b}, there exists $v \in \mathring{\cV}_{r-2}^2(\WFT) \otimes \bV$ such that $\Div v = w$. For any $c \in \mathbb{R}^3$ we have $\Grad \,(c \times x)= \mskw \, c$ and hence, using integration by parts
    \[
    \int_T 2 \vskw \, v \cdot c = \int_T v \colon \mskw \, c = \int_T v \colon \Grad \,( c \times x)=-\int_T \Div v \cdot (c \times x)= - \int_T  w \cdot (c \times x)=0, 
    \]
    where the last equality uses the fact  $w \perp \Rig$.
    Therefore, $\vskw \, v \in \mathring{\cV}_{r-2}^3(\WFT) \otimes \bV$ and  by the exactness of \eqref{eqn:seq2b}, we have \rev{an} $m \in \mathring{\Lag}_{r-1}^2(\WFT) \otimes \bV$ such that $\Div m=2 \vskw \, v$. Let $u=v-\bcurl (\Xi^{-1}m) \in \mathring{\cV}_{r-2}^2(\WFT) \otimes \bV$ and we see that $2\vskw \, u =2\vskw \, v- 2\vskw \,\bcurl (\Xi^{-1}m)=0 $ by \eqref{eqn:iden1}.  Hence, $u \in \mathring{U}_{r-2}^2(\WFT)$ and $\Div u=w$.
\end{proof}

 {When $r \ge 4$, there holds $\Rig \subset {U}_{r-3}^3(\WFT)$, so it is clear that 
\begin{equation}\label{Ur}
{U}_{r-3}^3(\WFT) = \Rig \oplus \mathring{U}_{r-3}^3(\WFT) \qquad \text{ for }  r \ge 4. 
\end{equation}
On the other hand, when $r = 3$, we need the following lemma for 
the calculation of dimensions of $\mathring{U}_{r-3}^3(\WFT)$.  
Let $P_\U$ be the $L^2$-orthogonal projection onto $U_0^3(\WFT)$ 
and let $P_\U\Rig := \{P_\U u:\ u \in  \Rig\}$. The proof of the 
following lemma is provided in the appendix. 
\begin{lemma}\label{lem:projRig}
    It holds,  
    \begin{equation}\label{U03}
    {U}_0^3(\WFT) = P_\U\Rig \oplus \mathring{U}_{0}^3(\WFT), 
    \end{equation}
   and $\dim P_\U\Rig = \dim \Rig=6$.
\end{lemma}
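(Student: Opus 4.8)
The plan is to prove the direct sum decomposition \eqref{U03} and then deduce the dimension count. First I would establish that the sum is direct, i.e.\ that $P_\U\Rig \cap \mathring{U}_0^3(\WFT) = \{0\}$. Recall that $\mathring{U}_0^3(\WFT) = \{u \in U_0^3(\WFT) : u \perp \Rig\}$ is precisely the $L^2$-orthogonal complement of $\Rig$ inside $U_0^3(\WFT)$, where orthogonality is understood via the $L^2(T)$ inner product. Since $P_\U$ is the orthogonal projection onto $U_0^3(\WFT)$, for any $p \in \Rig$ and any $q \in U_0^3(\WFT)$ we have $\langle P_\U p, q\rangle = \langle p, q\rangle$; in particular $P_\U p$ lies in $\mathring{U}_0^3(\WFT)$ if and only if $\langle p, q\rangle = 0$ for all $q \in \mathring U_0^3(\WFT)$. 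Combining this with $P_\U p \in \mathring U_0^3(\WFT)$ forces $\langle P_\U p, P_\U p\rangle = \langle p, P_\U p \rangle = 0$, so $P_\U p = 0$. This shows the intersection is trivial, and since $U_0^3(\WFT) = P_\U\Rig + \mathring U_0^3(\WFT)$ follows because $\mathring U_0^3(\WFT)$ is a codimension-$(\dim P_\U \Rig)$ subspace, we obtain \eqref{U03}.

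Next I would prove the dimension claim $\dim P_\U \Rig = \dim \Rig = 6$. Since $\dim \Rig = 6$ is immediate from the definition \eqref{eqn:rigid1} (a rigid displacement $a + b\times x$ is determined by $a,b \in \bR^3$, and the map $(a,b)\mapsto a+b\times x$ is injective), it suffices to show that $P_\U$ restricted to $\Rig$ is injective, equivalently that $\Rig \cap (U_0^3(\WFT))^\perp = \{0\}$. Here $U_0^3(\WFT) = V_0^3(\WFT)\otimes \bV$ is the space of piecewise-constant vector fields on the Worsey-Farin split, which contains all the global constants $\bV \subset U_0^3(\WFT)$. The key point is that $\Rig$ consists of genuine (globally defined) polynomial vector fields of degree at most one, and I would argue that no nonzero rigid displacement can be $L^2(T)$-orthogonal to every piecewise constant. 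Indeed, if $p = a + b\times x \in \Rig$ satisfies $\int_T p\cdot q\,dx = 0$ for all piecewise constant $q \in U_0^3(\WFT)$, then testing against the piecewise constants obtained by restricting an arbitrary constant vector to each sub-tetrahedron of $\WFT$ shows $\int_K p \,dx = 0$ on every $K \in \WFT$, hence the cellwise averages of $p$ all vanish; since $p$ is affine this quickly forces $p \equiv 0$. Thus $P_\U|_\Rig$ is injective and $\dim P_\U\Rig = 6$.

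The step I expect to be the main obstacle is verifying that $\Rig \cap (U_0^3(\WFT))^\perp = \{0\}$ cleanly, because it requires using the specific geometry of the Worsey-Farin mesh rather than a generic argument. The subtlety is that orthogonality to \emph{all} piecewise constants is a strong condition, but one must confirm that testing against the right piecewise-constant fields isolates the cellwise integrals $\int_K p\,dx$, and then that the vanishing of all these integrals for an affine field $p$ really does force $p=0$. The cleanest way to close this is to note that the $L^2$-projection of an affine function onto piecewise constants preserves the cellwise means, so $P_\U p = 0$ would give $\int_K p = 0$ for each cell $K$, and a nonzero affine function on a tetrahedron has nonzero integral (its mean equals its value at the barycenter, which cannot vanish on every cell unless $p$ is identically zero). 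I would present this as a short direct computation rather than invoking any further machinery, since the exact-sequence results of the paper are not needed for this particular lemma.
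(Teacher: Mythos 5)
Your overall strategy is the same as the paper's: obtain the decomposition from orthogonality, and reduce the dimension claim to injectivity of $P_\U$ on $\Rig$, which via the mean-value property of affine functions reduces to showing that a rigid displacement vanishing at the barycenter of every $K \in \WFT$ must be identically zero. Your direct-sum argument is in fact sound and slightly cleaner than the paper's ordering: since the constraints defining $\mathring{U}_0^3(\WFT)$ can be rewritten as $\langle u, P_\U w\rangle = 0$ for $w \in \Rig$, the space $\mathring{U}_0^3(\WFT)$ is precisely the orthogonal complement of $P_\U\Rig$ inside $U_0^3(\WFT)$, so \eqref{U03} holds without knowing $\dim P_\U \Rig$ in advance.

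The genuine gap is in the dimension claim, at exactly the step you flag as the main obstacle and then do not close. Your proposed resolution --- ``its mean equals its value at the barycenter, which cannot vanish on every cell unless $p$ is identically zero'' --- is a restatement of the claim, not a proof of it. It is also not a triviality: a nonzero rigid displacement $a + b \times x$ vanishes on an entire line (the axis of the corresponding infinitesimal rotation), so vanishing at the twelve cell barycenters forces $p \equiv 0$ only if those barycenters are \emph{not collinear}, and that is a geometric property of the Worsey--Farin split that must be verified. (Your parenthetical ``a nonzero affine function on a tetrahedron has nonzero integral'' is false as stated: any linear function vanishing at the barycenter has zero mean.) The paper supplies precisely this missing geometry: the barycenter of $K \in \WFT$ is the average of its four vertices, so the segment joining the barycenters of two cells sharing a face is parallel to the segment joining the two vertices opposite that face; applying this to the three subtetrahedra whose faces lie in a common $F \in \Delta_2(\calT_h)$, the pairwise barycenter differences are parallel to the (pairwise non-parallel) edges of $F$, hence these three barycenters are non-collinear, and a rigid displacement vanishing at three non-collinear points is zero. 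Some argument of this kind --- not a bare assertion --- is needed to complete your proof.
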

}

Using the exactness of the complexes \eqref{eqn:elseq}--\eqref{eqn:elseqb}
along with Table \ref{tab:VLDim}, we  calculate the dimensions of the spaces
in the next lemma.
\begin{lemma}
    When $r \ge 3$, we have:
    \begin{align}
       & \dim {U}_{r+1}^0(\WFT) = 6r^3+12r+12, && \dim \mathring{U}_{r+1}^0(\WFT) = 6r^3-36r^2+66r-36, \\
       & \dim {U}_{r}^1(\WFT) = 12r^3-9r^2+15r+6, && \dim \mathring{U}_{r}^1(\WFT)= 12r^3-63r^2+87r-18, \\
       & \dim {U}_{r-2}^2(\WFT) = 12r^3-27r^2+15r, && \dim \mathring{U}_{r-2}^2(\WFT)=12r^3-45r^2+33r+12, \\
       & \dim {U}_{r-3}^3(\WFT)=6r^3-18r^2+12r, && \dim \mathring{U}_{r-3}^3(\WFT)= 6r^3-18r^2+12r-6.
    \end{align}
\end{lemma}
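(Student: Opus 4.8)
The plan is to compute each of the eight dimensions by exploiting the exactness of the two elasticity complexes \eqref{eqn:elseq}--\eqref{eqn:elseqb} together with the rank-nullity theorem, reducing everything to dimensions already tabulated in Table \ref{tab:VLDim}. The fundamental principle is that for an exact complex, the alternating sum of dimensions vanishes, and more usefully, the dimension of the image of each map equals the dimension of the kernel of the next. First I would handle the four spaces whose definitions tie them directly to tabulated spaces. The space $U_{r-3}^3(\WFT) = V_{r-3}^3(\WFT)\otimes \bV$ has dimension $3\dim V_{r-3}^3(\WFT)$, read straight from the table; similarly $U_{r+1}^0(\WFT) = S_{r+1}^0(\WFT)\otimes \bV$ gives $3\dim S_{r+1}^0(\WFT)$. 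These anchor the computation at both ends.

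Next I would treat the symmetric spaces $U_r^1$ and $U_{r-2}^2$, which require a little more care because of the symmetry constraint. For $U_{r-2}^2(\WFT) = \{u\in V_{r-2}^2(\WFT)\otimes \bV:\ \skw\, u = 0\}$, I would use that $\skw$ maps $V_{r-2}^2(\WFT)\otimes\bV$ onto (a space identifiable with) $V_{r-2}^2(\WFT)\otimes \bK_3$ via the isomorphism $\vskw$ onto $V^3$-type spaces; concretely, the constraint $\skw\,u = 0$ removes the skew part, so $\dim U_{r-2}^2 = 3\dim V_{r-2}^2(\WFT) - 3\dim V_{r-2}^3(\WFT)$, using that $2\vskw$ surjects onto $V_{r-2}^3(\WFT)\otimes\bV$ per Lemma \ref{lem:mapproperty}. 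For $U_r^1(\WFT) = \sym(S_r^1(\WFT)\otimes\bV)$, I would instead use exactness of \eqref{eqn:preseq}: the map $\inc$ has image equal to $\ker[2\vskw,\Div]'$ inside $V_{r-2}^2\otimes\bV$, and $\sym$ identifies $U_r^1$ with the quotient of $S_r^1(\WFT)\otimes\bV$ by the image of $[\Grad,-\mskw]$; tracking these via rank-nullity yields the claimed cubic. Alternatively, and perhaps more cleanly, once three of the four dimensions in complex \eqref{eqn:elseq} are known, the fourth follows from the Euler characteristic identity $\dim U_{r+1}^0 - \dim U_r^1 + \dim U_{r-2}^2 - \dim U_{r-3}^3 = \dim \Rig = 6$.

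For the boundary-condition ("circle") spaces I would proceed analogously using complex \eqref{eqn:elseqb}, reading off $\dim \mathring{U}_{r+1}^0 = 3\dim \mathring{S}_{r+1}^0(\WFT)$ and $\dim\mathring{U}_{r-2}^2 = 3\dim\mathring{\cV}_{r-2}^2(\WFT) - 3\dim\mathring{V}_{r-3}^3(\WFT)$ (again invoking surjectivity of $2\vskw$ onto $\cV_{r-2}^3$ from Lemma \ref{lem:mapproperty}). The one genuinely delicate case is $\dim\mathring{U}_{r-3}^3(\WFT) = \{u\in V_{r-3}^3(\WFT)\otimes\bV:\ u\perp\Rig\}$, since $\Rig$ is six-dimensional: when $r\ge 4$ one has $\Rig\subset U_{r-3}^3(\WFT)$ and the orthogonality simply subtracts $6$, giving $\dim\mathring{U}_{r-3}^3 = \dim U_{r-3}^3 - 6$, consistent with \eqref{Ur}; for $r=3$ the subtraction by $\dim\Rig = 6$ is still correct but must be justified through Lemma \ref{lem:projRig}, which guarantees $\dim P_\U\Rig = 6$. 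Then the remaining boundary dimension $\dim\mathring{U}_r^1$ I would pin down by the Euler identity for the exact sequence \eqref{eqn:elseqb}, namely $\dim\mathring{U}_{r+1}^0 - \dim\mathring{U}_r^1 + \dim\mathring{U}_{r-2}^2 - \dim\mathring{U}_{r-3}^3 = 0$.

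I expect the main obstacle to be bookkeeping rather than conceptual: the table entries are polynomials of degree three in $r$, so each dimension is a sum or difference of such polynomials, and the symmetry and orthogonality constraints must be subtracted with exactly the right multiplicities (a factor of $3 = \dim\bV$ appears throughout, and the skew part contributes $3 = \dim\bK_3$). The subtle points to verify carefully are (i) that the surjectivity statements of Lemma \ref{lem:mapproperty} genuinely let me equate $\dim\{\skw\,u=0\}$ with $3(\dim V^2 - \dim V^3)$ without an exactness gap, and (ii) the $r=3$ edge case for $\mathring{U}_{r-3}^3$, where naive containment $\Rig\subset U_0^3(\WFT)$ fails and one must instead use the orthogonal decomposition of Lemma \ref{lem:projRig} to confirm the codimension is still $6$. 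Once these are in place, matching the resulting polynomials to the stated right-hand sides is a routine verification.
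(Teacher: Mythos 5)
Your proposal is correct and follows essentially the same route as the paper: table lookups for the tensor-product spaces $U_{r+1}^0$, $\mathring{U}_{r+1}^0$, $U_{r-3}^3$; rank-nullity applied to the surjective $\vskw$ maps of Lemma \ref{lem:mapproperty} for the symmetric spaces $U_{r-2}^2$, $\mathring{U}_{r-2}^2$; the decomposition \eqref{Ur} for $r\ge 4$ together with Lemma \ref{lem:projRig} for the $r=3$ case of $\mathring{U}_{r-3}^3$; and the alternating-sum (Euler) identity from the exact complexes \eqref{eqn:elseq}--\eqref{eqn:elseqb} to pin down $U_r^1$ and $\mathring{U}_r^1$. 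One slip to correct: in your formula for $\dim\mathring{U}_{r-2}^2(\WFT)$ the subtracted term should be $3\dim\cV_{r-2}^3(\WFT)$ --- the codomain of the relevant $\vskw$ surjection, exactly as your own parenthetical states --- and not $3\dim\mathring{V}_{r-3}^3(\WFT)$, which is a different space and would give the wrong count.
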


\begin{proof}
    By Lemma \ref{lem:mapproperty} and the rank-nullity theorem, we have
    \begin{align*}
        \dim {U}_{r-2}^2(\WFT) & = \dim \ker (V_{r-2}^2(\WFT) \otimes \bV, \vskw) = \dim V_{r-2}^2(\WFT) \otimes \bV - \dim V_{r-2}^3(\WFT) \otimes \bV \\
        & = (6r^3-9r^2+3r) \times 3-2r(r+1)(r-1) \times 3= 12r^3-27r^2+15r,\\
        \dim \mathring{U}_{r-2}^2(\WFT) & = \dim \ker (\mathring{\cV}_{r-2}^2(\WFT) \otimes \bV, \vskw) = \dim \mathring{\cV}_{r-2}^2(\WFT) \otimes \bV - \dim \cV_{r-2}^3(\WFT) \otimes \bV \\
        & = (6(r-2)^3+21(r-2)^2+9(r-2)+2) \times 3 \\
            &~~~~ -2((r-2)^3+6(r-2)^2+5(r-2)+2) \times 3 \\
        & =18 r^3 - 45 r^2 - 9 r + 60 -(6 r^3 - 42 r + 48) = 12 r^3 - 45 r^2 + 33 r + 12.
    \end{align*}
The dimensions of ${U}_{r+1}^0(\WFT)$, $\mathring{U}_{r+1}^0(\WFT)$ and ${U}_{r-3}^3(\WFT)$ are computed similarly using the dimensions of $S_{r+1}^0(\WFT)$, 
$\mathring{S}_{r+1}^0(\WFT)$ and $V_{r-3}^3(\WFT)$. 
Also,  {using Lemma \ref{lem:projRig} when $r=3$ or \eqref{Ur} when $r \ge 4$, we obtain}
\[\dim \mathring{U}_{r-3}^3(\WFT)=  \dim {U}_{r-3}^3(\WFT)-6.\]
Using the exactness of the sequences \eqref{eqn:elseq} and \eqref{eqn:elseqb} in Theorem \ref{thm:elseq}, with the rank-nullity theorem, we have
\begin{align*}
    \dim {U}_{r}^1(\WFT)& = \dim {U}_{r+1}^0(\WFT)+\dim {U}_{r-2}^2(\WFT)- \dim {U}_{r-3}^3(\WFT)-\dim \Rig \\
    & = 12 r^3 - 9 r^2 + 15 r + 6,\\
    \dim \mathring{U}_{r}^1(\WFT)& = \dim \mathring{U}_{r+1}^0(\WFT)+\dim \mathring{U}_{r-2}^2(\WFT)-\dim \mathring{U}_{r-3}^3(\WFT) \\
    & = 12 r^3 - 63 r^2 + 87 r - 18.
\end{align*}
\end{proof}

\subsection{An equivalent characterization  of \texorpdfstring{$U^1_r(\WFT)$}{U1r} and \texorpdfstring{$\mathring{U}_r^1(\WFT)$}{U1r0}}

\rev{We will now show that $U_r^1(\WFT)$ admits a characterization as a conforming subspace of the Sobolev space $H^1(\text{inc})$ appearing in \eqref{eq:the-complex}. The next result will also help us}
find the local degrees of freedom of $U_r^1(\WFT)$ and $\mathring{U}_r^1(\WFT)$. 
\begin{thm}\label{thm:charU1}
    We have the following equivalent definitions of ${U}_{r}^1(\WFT)$ and $\mathring{U}_{r}^1(\WFT)$:
    \begin{align}
    \label{eqn:char1}
       & {U}_{r}^1(\WFT) = \{u \in H^1(T;\bS): u \in \cP_r(\WFT; \bS), (\bcurl u)' \in V_{r-1}^1(\WFT) \otimes \bV\}, \\
       \label{eqn:char1b}
       & \mathring{U}_{r}^1(\WFT) = \{u \in \mathring{H}^1(T;\bS): u \in \cP_r(\WFT; \bS), (\bcurl u)' \in \mathring{V}_{r-1}^1(\WFT) \otimes \bV,
      \\ \nonumber 
       & \hspace{8cm}
         {\rm inc}(u) \in \mathring{\cV}_{r-2}^2(\WFT) \otimes \bV \}.  
    \end{align}
\end{thm}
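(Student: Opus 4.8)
The plan is to prove both set inclusions for \eqref{eqn:char1}; the argument for \eqref{eqn:char1b} runs in parallel, with boundary behavior tracked at each step. Write $W$ for the space on the right-hand side of \eqref{eqn:char1}. Since $U_r^1(\WFT)=\sym\big(S_r^1(\WFT)\otimes\bV\big)$ by definition, and the symmetric part of a matrix field is unchanged upon adding a skew field, the statement reduces to two tasks: (i) show $\sym w\in W$ for every $w\in S_r^1(\WFT)\otimes\bV$; and (ii) given $u\in W$, produce a vector field $z$ so that $w:=u+\mskw z$ lies in $S_r^1(\WFT)\otimes\bV$, whence $\sym w=u\in U_r^1(\WFT)$.

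For task (i), decompose $w=\sym w+\mskw(\vskw w)$, apply $\bcurl$, and use \eqref{eqn:iden2} in the form $\bcurl\,\mskw=-\Xi\Grad$ to obtain $\bcurl\,\sym w=\bcurl w+\Xi\Grad(\vskw w)$. Transposing and using $(\Xi M)'=M-\tr(M)\,\mathbb{I}$ gives
\[
(\bcurl\,\sym w)'=(\bcurl w)'+\Grad(\vskw w)-\big(\Div\vskw w\big)\,\mathbb{I}.
\]
Each summand lies in $V_{r-1}^1(\WFT)\otimes\bV$: the field $(\bcurl w)'$ is continuous since $w\in S_r^1(\WFT)\otimes\bV$; the rows of $\Grad(\vskw w)$ are gradients of continuous scalars, hence tangentially continuous; and the scalar $\Div\vskw w=\tfrac12\tr(\bcurl w)$ (a short computation) is continuous because $\bcurl w$ is, so $(\Div\vskw w)\mathbb{I}$ is continuous. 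The remaining defining properties of $W$ (symmetry, $H^1$-regularity, polynomial degree) are immediate. For the boundary version the same identity shows $(\bcurl\,\sym w)'$ has vanishing tangential trace, as $w$, $\bcurl w$, and $\vskw w$ all vanish on $\partial T$; the condition $\Inc(\sym w)\in\mathring{\cV}_{r-2}^2(\WFT)\otimes\bV$ is then automatic, being exactly the assertion that $\Inc$ maps $\mathring U_r^1(\WFT)$ into $\mathring U_{r-2}^2(\WFT)$, which is contained in Theorem \ref{thm:elseq}.

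For task (ii), we seek $z\in\Lag_r^1(\WFT)$ making $\bcurl(u+\mskw z)=\bcurl u-\Xi\Grad z$ continuous, i.e.\ a member of $\Lag_{r-1}^1(\WFT)\otimes\bV$; membership $u+\mskw z\in\Lag_r^1(\WFT)\otimes\bV$ then needs only $z\in\Lag_r^1(\WFT)$. Because $(\bcurl u)'\in V_{r-1}^1(\WFT)\otimes\bV$, its jump across any internal face with unit normal $\bn$ is purely normal, and because $u$ is symmetric we have $\tr(\bcurl u)=0$ by \eqref{eqn:iden5}, so this jump is a \emph{tangential} vector field $a$ (with $a\cdot\bn=0$) on the face. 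Matching jumps on both sides, via the same transpose computation as above, reduces the problem to finding a continuous $z\in\Lag_r^1(\WFT)$ whose normal-derivative jumps equal $a$, the condition $a\cdot\bn=0$ being precisely the compatibility that lets the skew correction close up. In the boundary case one must moreover obtain $z\in\mathring{\Lag}_r^1(\WFT)$ and ensure $\bcurl(u+\mskw z)$ vanishes on $\partial T$; here the extra hypothesis $\Inc u\in\mathring{\cV}_{r-2}^2(\WFT)\otimes\bV$ supplies exactly the control of the boundary data needed, which is why this condition appears in \eqref{eqn:char1b} but is absent from \eqref{eqn:char1}.

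The main obstacle is task (ii): constructing the skew corrector $z$ realizing the prescribed tangential normal-derivative jumps (respecting, in the boundary case, vanishing trace on $\partial T$). The forward inclusion is a direct identity computation, whereas the reverse requires knowing which jump data are realizable by continuous piecewise polynomials on the Worsey--Farin split. It is exactly here that the finer mesh structure enters, through the Clough--Tocher split induced on each face of $\calT_h$, and the construction is carried out using the two-dimensional complexes of Theorem \ref{thm:2delaseq} together with the local de Rham exactness of Lemma \ref{lem:localseq}.
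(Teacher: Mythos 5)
Your task (i) is correct and is essentially the paper's own argument in lightly rearranged form: the identity $(\bcurl\,\sym w)' = (\bcurl w)' + \Grad(\vskw w) - (\Div \vskw w)\,\mathbb{I}$, together with $\Div \vskw w = \tfrac12 \tr(\bcurl w)$, is equivalent to the paper's computation $(\bcurl u)' = \Xi^{-1}\bcurl z + \Grad\,\vskw(z)$, and your appeal to Theorem \ref{thm:elseq} for ${\rm inc}$ mapping $\mathring{U}_r^1(\WFT)$ into $\mathring{\cV}_{r-2}^2(\WFT)\otimes\bV$ is legitimate (that theorem precedes this one, so there is no circularity).

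The genuine gap is task (ii), which is the substance of the theorem. You correctly reduce the reverse inclusion to finding a continuous $z\in\Lag_r^1(\WFT)$ whose normal-derivative jumps across the internal faces of $\WFT$ equal the prescribed tangential fields $a$, but you never construct such a $z$: the closing paragraph only asserts that ``the construction is carried out using'' Theorem \ref{thm:2delaseq} and Lemma \ref{lem:localseq}. That assertion is exactly where all the difficulty lives. Prescribed jump data on the internal faces must satisfy compatibility conditions along the internal edges of the split and at the incenter, where several internal faces meet, and nothing in your argument identifies these conditions, verifies that the data $a$ coming from $(\bcurl u)'$ satisfies them, or explains how the two-dimensional complexes would produce $z$. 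Moreover, this face-by-face jump-realization route is the hard way around; the paper's proof needs no jump analysis at all. Given $m$ in the right-hand side of \eqref{eqn:char1}, set $\sigma = \bcurl(\bcurl m)'$; the hypothesis $(\bcurl m)'\in V_{r-1}^1(\WFT)\otimes\bV$ gives $\sigma\in V_{r-2}^2(\WFT)\otimes\bV$, while $\Div\sigma=0$ holds automatically, $\sigma=\inc(m)$ by \eqref{eqn:iden5}, and $\vskw(\sigma)=0$ by \eqref{eqn:iden4}. Exactness of the derived complex \eqref{eqn:preseq} (Theorem \ref{thm:preseq}) then produces $w\in S_r^1(\WFT)\otimes\bV$ with $\inc(w)=\sigma$; since $\inc(m-w)=0$, the de Rham exactness \eqref{eqn:seq0} yields a potential $v$ with $\Grad\,v = \Xi^{-1}\bcurl(m-w)$, and $z = m + \mskw(v)$ satisfies $\sym(z)=m$ and, by \eqref{eqn:iden2}, $\bcurl z = \bcurl m - \Xi\Grad\,v = \bcurl w \in \Lag_{r-1}^1(\WFT)\otimes\bV$, i.e.\ $z\in S_r^1(\WFT)\otimes\bV$ and $m\in U_r^1(\WFT)$. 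The boundary case runs identically with \eqref{eqn:preseqb} and \eqref{eqn:seq0b}, and this is precisely why the extra hypothesis ${\rm inc}(u)\in\mathring{\cV}_{r-2}^2(\WFT)\otimes\bV$ appears in \eqref{eqn:char1b}: it places $\sigma$ in the space where exactness of \eqref{eqn:preseqb} applies. Your proposal needs to be completed by an argument of this kind (or by a full jump-compatibility construction), not by a pointer to lemmas that do not by themselves deliver the corrector.
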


\begin{proof}
    Let the right-hand side of \eqref{eqn:char1} and \eqref{eqn:char1b} be denoted by $M_r$ and $\mathring{M}_r$, respectively. If $u \in {U}_{r}^1(\WFT)$, then $u = \sym(z)$ for some $z \in S_{r}^1(\WFT) \otimes \bV$,  so \eqref{eqn:iden5}, \eqref{eqn:iden2} and Definition \ref{def:maps} give
    \begin{equation}\label{aux617}
      (\bcurl u)'= \Xi^{-1} \bcurl u =\Xi^{-1} \bcurl z +\Grad \, \vskw(z),
    \end{equation}
    from which we conclude  $(\bcurl u)' \in V_{r-1}^1(\WFT) \otimes \bV$. This proves the inclusion 
    \begin{equation}
        {U}_{r}^1(\WFT) \subset M_r.
    \end{equation}
    Similarly, if $u \in \mathring{U}_{r}^1(\WFT)$, then \eqref{aux617} for $z \in \mathring{S}_{r}^1(\WFT) \otimes \bV$, hence we have $(\bcurl u)' \in \mathring{V}_{r-1}^1(\WFT) \otimes \bV$. Moreover, using \eqref{eqn:iden3} and the exact \rev{sequence \eqref{eqn:seq1b}}, we obtain
    \[{\rm inc}(u)=\inc(u)=\inc(z) \in \bcurl(\mathring{\Lag}_{r-1}^1(\WFT) \otimes \bV) \subset \mathring{\cV}_{r-2}^2(\WFT) \otimes \bV. \] This proves
    \begin{equation}
        \mathring{U}_{r}^1(\WFT) \subset \mathring{M}_r.
    \end{equation}

    We continue to prove the reverse inclusion of \eqref{eqn:char1}. For any $m \in M_r$, let $\sigma=\bcurl (\bcurl m)'$  which immediately implies that $\Div \sigma=0$. Moreover,  by \eqref{eqn:iden5} $\sigma= \inc(m)$ and by \eqref{eqn:iden4} $\vskw(\sigma)=0$. Hence, we have $\sigma \in V_{r-2}^2(\WFT) \otimes \bV$, and by the exact sequence  \eqref{eqn:preseq} there exists $w \in S_{r}^1(\WFT)\otimes \bV $ such that $\inc(w)=\sigma$.  Therefore, $w-m \in V_{r}^1(\WFT) \otimes \bV$ with $\inc(w-m)=0$ and hence, by the exact sequence \eqref{eqn:seq0}, there exists \rev{$v \in  {\Lag}_{r}^0(\WFT)\otimes \bV$ }such that $\Grad \, v =\Xi^{-1} \bcurl(w-m)$. Setting $z= m+ \vskw(v)$ gives  $\sym(z)=m$ and by \eqref{eqn:iden2},
    \begin{equation*}
    \bcurl z=\bcurl m+ \bcurl \mskw v=\bcurl m- \Xi \Grad \, v=\bcurl w \in  \rev{{\Lag}_{r-1}^1(\WFT)\otimes \bV}.   
    \end{equation*}
    We conclude 
    \begin{equation}
        M_r \subset {U}_{r}^1(\WFT).
    \end{equation}
        
    The reverse inclusion to prove \eqref{eqn:char1b} follows similar arguments, using the exact sequence  \eqref{eqn:preseqb} and \eqref{eqn:seq0b} in place of \eqref{eqn:preseq} and \eqref{eqn:seq0}, respectively. 
\end{proof}

\section{Local degrees of freedom for the elasticity complex on Worsey-Farin splits} \label{sec:LocalDOFs}

In this section we present degrees of freedom 
for the discrete spaces arising in the elasticity complex. We first need to introduce some notation as follows. 
Recall that $T^a$ is the set of four tetrahedra obtained by connecting the vertices of $T$ with its incenter.
For each $K \in T^a$, we denote the local Worsey-Farin splits of $K$ as $K^{wf}$, i.e., 
\[
K^{wf} = \{S \in T^{wf}: \Bar{S} \subset \Bar{K} \}. 
\]
Then, similar to the discrete functions spaces on $T^{wf}$ defined in Section \ref{subsec:localDeRhamy}, 
we define 
spaces on $K^{wf}$ by taking their restriction:
\[
\Lag_{r}^0(K^{wf}):=\{u|_K: u \in \Lag_{r}^0(T^{wf})\}; \qquad 
S_{r}^0(K^{wf}) : = \{u|_K: u \in S_{r}^0(T^{wf})\}.
\]
\begin{lemma} \label{lem:mu}
Let  $T \in \calT_h$, and let $F \in \Delta_2(T)$.  
If $p \in \Lag_r^0(T^{wf})$ with $p=0$ on $F$, then $\Grad \, p$ is continuous on $F$. In particular, 
the normal derivative $\partial_n p$ is continuous on $F$.
In addition, if $p \in S_r^0(T^{wf})$ with $p=0$ on $F$, then $\Grad \, p|_\F \in S_{r-1}^0(F^{ct}) \otimes \bV$ and in particular, $\partial_n p|_{F} \in S_{r-1}^0(\Fct)$.
\end{lemma}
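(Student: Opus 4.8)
The plan is to exploit the structure of the Worsey-Farin split near a face $F$, where the refinement induces a Clough-Tocher split $F^{ct}$ on $F$. The key geometric observation is that when $p$ vanishes on $F$, its tangential gradient $\Grad_\F\,p$ automatically vanishes on $F$ (since $p|_F \equiv 0$ forces all surface derivatives to vanish), so the only component of $\Grad\,p$ that could fail to be continuous across the internal faces meeting $F$ is the normal component $\partial_n p$. Thus continuity of $\Grad\,p$ on $F$ reduces entirely to continuity of $\partial_n p$ across the edges of $F^{ct}$.

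First I would decompose $\Grad\,p = \Grad_\F\,p + (\partial_n p)\,\bn_\F$ on $F$ using the basis from Section~\ref{subsec:Matrix}, and argue that $\Grad_\F\,p = 0$ on $F$ because $p$ restricted to $F$ is identically zero; this handles the tangential part trivially. The heart of the matter is then to show $\partial_n p$ is continuous across each interior edge $e \in \Delta_1^I(F^{ct})$. Here I would use the function $\theta_e$ from~\eqref{eqn:thetaEDef} together with the $H^1$-conformity of $p$ across the internal faces of $\WFT$ that contain $e$. Since $p \in \Lag_r^0(T^{wf}) \subset H^1(T)$, its full gradient $\Grad\,p$ has continuous tangential components across every internal face $f$ of $\WFT$; combining the tangential continuity along directions $\bt_e$ and $\bt_s$ (as set up before~\eqref{eqn:thetaEDef}) with the already-established vanishing of $\Grad_\F\,p$ on $F$ should pin down the normal derivative $\partial_n p$ as single-valued on $e$.

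For the $S_r^0$ statement, I would additionally invoke the defining property $\Grad\,p \in \Lag_{r-1}^1(T^{wf})$, which gives that $\Grad\,p$ is globally continuous (not merely $H^1$). Restricting to $F$, the component $\partial_n p|_F$ is then a continuous piecewise polynomial of degree $r-1$ on $F^{ct}$, i.e.\ it lies in $\Lag_{r-1}^0(\Fct)$. To upgrade this to membership in $S_{r-1}^0(\Fct)$, I would need to show that $\Grad_\F(\partial_n p|_F) \in \Lag_{r-2}^1(\Fct)$, which amounts to showing that the surface gradient of the normal derivative is itself continuous on $F$. This follows by differentiating: the tangential derivatives of $\partial_n p$ are mixed second derivatives of $p$, which are continuous on $F$ precisely because $\Grad\,p \in \Lag_{r-1}^1(T^{wf})$ is a full (vector) Lagrange field whose own tangential derivatives along $F$ inherit the needed continuity from the smoothness constraint defining $S_r^0$.

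The main obstacle I anticipate is the careful bookkeeping across the edges of $F^{ct}$: unlike the Alfeld case, the face $F$ is itself split into three triangles $Q_i$, so the internal edges $e$ of $F^{ct}$ carry genuine jump conditions, and I must verify that the $H^1$-continuity of $p$ across the internal faces of $\WFT$ meeting these edges is \emph{exactly} enough to force $\theta_e(\partial_n p) = 0$. The delicate point is aligning the normal direction $\bn_\F$ (fixed on $F$) with the various internal-face normals $\bn_f$ and tangents $\bt_s$; I expect the computation showing that the only surviving jump is the normal-derivative jump, and that it too must vanish, to be the technical crux rather than any deep structural difficulty.
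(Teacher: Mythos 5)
Your proof of the \emph{first} assertion is correct, but it follows a genuinely different route from the paper. The paper's proof is a one-line factorization: on the Alfeld subtetrahedron $K\in T^a$ having $F$ as a face, $p$ vanishes on $F$ and therefore factors as $p=\mu q$ with $q\in\Lag_{r-1}^0(K^{wf})$, where $\mu$ is the bubble of Definition \ref{def:mu} (on $K$ it is linear, vanishes on $F$, and $\Grad\mu$ is a constant multiple of $\bn_\F$); then $\Grad p|_\F=q|_\F\,\Grad\mu$ is continuous, and the $S_r^0$ statement follows from the same identity once one checks $q\in S_{r-1}^0(K^{wf})$. Your route analyzes jumps directly: with $S_i=\mathrm{conv}(Q_i,z_T)$ the two subtetrahedra meeting an interior edge $e=\partial Q_1\cap\partial Q_2$ of $\Fct$, and $f=\mathrm{conv}(e,z_T)$ their common internal face, the trace of $\Grad p$ on $Q_i$ is $(\partial_n p|_{Q_i})\,\bn_\F$ because $\Grad_\F p=0$, and $H^1$-continuity of $p$ across $f$ gives $\jmp{\Grad p\cdot \bt_s}_e=0$; since $\bt_s\cdot\bn_\F\neq 0$ (the face $f$ is not coplanar with $F$, as $z_T$ is interior to $T$), the jump of $\partial_n p$ across $e$ vanishes. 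This is complete and correct for the $\Lag_r^0$ case; what the factorization buys the authors is that both halves of the lemma drop out of a single identity.

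Your treatment of the \emph{second} assertion, however, has a genuine gap. You claim $\Grad_\F(\partial_n p|_\F)$ is continuous across the interior edges of $\Fct$ ``because $\Grad p\in\Lag_{r-1}^1(\WFT)$ is a full vector Lagrange field whose tangential derivatives along $F$ inherit the needed continuity.'' That inference is false as stated: the trace on $F$ of a merely continuous piecewise polynomial field has components in $\Lag_{r-1}^0(\Fct)$, but its surface gradient may jump across the interior edges of $\Fct$ --- this is exactly the distinction between the $\Lag$ and $S$ spaces on which the paper rests, and if continuity of a field alone implied continuity of the tangential derivatives of its trace, one would have $\Lag_r^0=S_r^0$. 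To close the gap you must use the hypothesis $p=0$ on $F$ a second time, together with continuity of $\Grad p$ across the \emph{internal} face $f$ (not across $F$ itself). Concretely, let $H=\Grad\,\Grad p$ and let $J=\jmp{H}_e$ be the jump of the Hessian between $S_1$ and $S_2$ on $e$. Continuity of $\Grad p$ across the plane face $f$ gives $J\bt_e=J\bt_s=0$ on $e$; vanishing of $p$ on $F$ makes every purely tangential second derivative vanish from both sides, so $\bt'J\bt''=0$ for $\bt,\bt''\in\{\bt_e,\bs_e\}$. Writing $\bt_s=\alpha\bs_e+\gamma\bn_\F$ with $\gamma\neq0$ and using the symmetry of $J$, these conditions force $J=0$; in particular $\jmp{\bs_e'H\bn_\F}_e=0$, which is precisely the continuity of $\Grad_\F(\partial_n p)$ you need. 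Alternatively, the paper's factorization disposes of this case at once: $q=p/\mu$ lies in $S_{r-1}^0(K^{wf})$, hence $q|_\F\in S_{r-1}^0(\Fct)$ and $\Grad p|_\F=q|_\F\,\Grad\mu\in S_{r-1}^0(\Fct)\otimes\bV$.
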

\begin{proof}
Let $K \in T^a$  such that $F \in \Delta_2(K)$. Then, since $p$ vanishes on $F$, we have that   $p= \mu q$  on $K$ where $q \in \Lag_{r-1}^0(K^{wf})$ and $\mu$ is the piecewise linear polynomial in Definition \ref{def:mu}.  We  write  $\Grad \, p= \mu  \Grad \, q+ q  \Grad \, \mu$, and since $\mu$ vanishes on $F$ and $\Grad \, \mu$ is constant on $F$, we have $\Grad \, p$ is continuous on $F$. 

Furthermore, if $p \in S_r^0(T^{wf})$, then $p= \mu q$  on $K$ where $q \in S_{r-1}^0(K^{wf})$
because $\mu$ is a strictly positive polynomial on $K$. Hence by the same reasoning as the previous case, 
$\Grad \, p|_\F \in S_{r-1}^0(F^{ct}) \otimes \bV$.
\end{proof}

\subsection{Dofs of \texorpdfstring{$U^0$}{U0} space}

\begin{lemma}\label{lem:dofu0}
   A function $u \in {U}_{r+1}^0(\WFT)$, with $r \ge 3$, is fully determined by the following \dofs:
   \begin{subequations} \label{dof:U0}
       \begin{alignat}{3}
       \label{U0:dofa}
        & u(a), && \qquad a \in \Delta_0(T), && \qquad \dofcnt{12}, \\
        \label{U0:dofb}
        & {\rm grad}\, u(a), && \qquad a \in \Delta_0(T), && \qquad \dofcnt{36}, \\   
        \label{U0:dofc}
       & \int_{e}  u \cdot \kappa, && \qquad \kappa \in [\cP_{r-3}(e)]^3,~ e \in \Delta_1(T), && \qquad \dofcnt{18(r-2)}, \\
       \label{U0:dofd}
        & \int_{e}  \frac{\partial u}{\partial \bn_e^{\pm}} \cdot \kappa, && \qquad \kappa \in [\cP_{r-2}(e)]^3,~ e \in \Delta_1(T), && \qquad \dofcnt{36(r-1)}, \\
        \label{U0:dofe}
        & \int_\F \varepsilon_\F(u_{F}) \colon \varepsilon_\F(\kappa), && \qquad \kappa \in [\mathring{S}_{r+1}^0(F^{ct})]^2 ,  F \in \Delta_2(T) && \qquad \dofcnt{12r^2-36r+24}, \\
        \label{U0:doff}
        & \int_\F [\varepsilon(u)]_{\Fn} \cdot \kappa, && \qquad \kappa \in \Grad_\F \mathring{S}_{r+1}^0(F^{ct}) ,  F \in \Delta_2(T) && \qquad
        \dofcnt{6r^2-18r+12}, \\
        \label{U0:dofg}
        & \int_\F \partial_{\bn}(u \cdot \bn_\F) \kappa, && \qquad \kappa \in {\cR}_{r}^0(F^{ct}),  F \in \Delta_2(T) && \qquad
        \dofcnt{6r^2-18r+12}, \\
        \label{U0:dofh}
        & \int_\F \partial_{\bn} u_{F} \cdot \kappa, && \qquad \kappa \in [{\cR}_{r}^0(F^{ct})]^2,  F \in \Delta_2(T) && \qquad
        \dofcnt{12r^2-36r+24}, \\
        \label{U0:dofi}
        & \int_T \varepsilon(u) \colon \varepsilon(\kappa), && \qquad \kappa \in \mathring{U}_{r+1}^0(\WFT), && \qquad
        \dofcnt{6(r-1)(r-2)(r-3)},
       \end{alignat}
   \end{subequations}
   where $\frac{\partial}{\partial \bn_e^{\pm}}$ represents two normal derivatives to edge $e$ and $\{ \bn_e^{+}, \bn_e^{-}, \bt_e \}$ forms an edge-based orthonormal basis of $\mathbb{R}^3$. 
\end{lemma}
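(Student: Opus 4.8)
The plan is to establish \emph{unisolvence}: since a direct computation shows that the number of functionals listed in \eqref{dof:U0} equals $\dim U_{r+1}^0(\WFT)=6r^3+12r+12$, it suffices to prove that any $u\in U_{r+1}^0(\WFT)$ annihilated by every functional in \eqref{dof:U0} vanishes identically. I would sweep through the simplices of $T$ in increasing dimension (vertices, then edges, then faces, then the interior), at each stage using the corresponding block of DOFs together with an $L^2$-pairing (``test with $\kappa=u$-data'') argument.

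First I would treat the $1$-skeleton. On each $e\in\Delta_1(T)$ the trace $u|_e$ is a single vector polynomial of degree $\le r+1$, since Worsey--Farin refinement does not subdivide edges of $T$. The vertex values \eqref{U0:dofa} and vertex gradients \eqref{U0:dofb} fix $u$ and all its first derivatives at the two endpoints; combining the vanishing endpoint value and tangential derivative with the interior moments \eqref{U0:dofc} against $[\cP_{r-3}(e)]^3$, a Hermite argument (factoring $u|_e=(t-t_0)^2(t-t_1)^2 q$ with $q\in\cP_{r-3}(e)$ and testing with $\kappa=q$) gives $u|_e=0$ and $\partial_{\bt_e}u|_e=0$. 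The vertex gradients also kill the edge-normal derivatives $\partial_{\bn_e^\pm}u$ at the endpoints, so the moments \eqref{U0:dofd} against $[\cP_{r-2}(e)]^3$ force $\partial_{\bn_e^\pm}u|_e=0$. Hence $u$ and $\Grad u$ vanish on the whole $1$-skeleton of $T$.

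The crux is the face step, which I would carry out on each $F\in\Delta_2(T)$ in an order chosen so that the identity \eqref{more4}, $2[\varepsilon(u)]_{\Fn}=\Grad_\F(u\cdot\bn_\F)+\partial_\bn u_\F$, can be decoupled. Because $u$ and $\Grad u$ vanish on $\partial F$, the traces satisfy $u_\F\in[\mathring{S}_{r+1}^0(\Fct)]^2$ and $u\cdot\bn_\F\in\mathring{S}_{r+1}^0(\Fct)$. (i) Testing \eqref{U0:dofe} with $\kappa=u_\F$ gives $\int_\F|\varepsilon_\F(u_\F)|^2=0$, so $u_\F$ is a $2$D rigid motion vanishing on $\partial F$, i.e.\ $u_\F=0$. (ii) With $u_\F=0$ the scalar components $u\cdot\bt_j\in S_{r+1}^0(\WFT)$ vanish on $F$, so Lemma~\ref{lem:mu} applies componentwise and, using $\Grad u|_{\partial F}=0$, yields $\partial_\bn u_\F\in[\cR_r^0(\Fct)]^2$; testing \eqref{U0:dofh} with $\kappa=\partial_\bn u_\F$ forces $\partial_\bn u_\F=0$. (iii) Now \eqref{more4} reduces to $2[\varepsilon(u)]_{\Fn}=\Grad_\F(u\cdot\bn_\F)$, so testing \eqref{U0:doff} with $\kappa=\Grad_\F(u\cdot\bn_\F)\in\Grad_\F\mathring{S}_{r+1}^0(\Fct)$ gives $\int_\F|\Grad_\F(u\cdot\bn_\F)|^2=0$, whence $u\cdot\bn_\F=0$. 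Finally $u|_F=0$ entirely, so Lemma~\ref{lem:mu} gives $\partial_\bn(u\cdot\bn_\F)\in S_r^0(\Fct)$, which vanishes on $\partial F$ and so lies in $\cR_r^0(\Fct)$; testing \eqref{U0:dofg} with $\kappa=\partial_\bn(u\cdot\bn_\F)$ shows $\partial_\bn(u\cdot\bn_\F)=0$. Thus $u|_F=0$ and $\partial_\bn u|_F=0$ on every face.

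Having vanishing trace and normal derivative on every face means $u\in\mathring{U}_{r+1}^0(\WFT)$, so the interior functional \eqref{U0:dofi} tested with $\kappa=u$ yields $\int_T|\varepsilon(u)|^2=0$; then $\varepsilon(u)=0$ gives $u\in\Rig$, and since $u$ vanishes on $\partial T$ we conclude $u=0$. The main obstacle I anticipate is the face step: beyond choosing the ordering that decouples \eqref{more4}, the essential point is that the normal derivatives $\partial_\bn u_\F$ and $\partial_\bn(u\cdot\bn_\F)$ inherit genuine $C^1$ regularity on the Clough--Tocher split (i.e.\ lie in $\cR_r^0(\Fct)$) rather than mere continuity. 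This extra smoothness is exactly what makes the $\cR_r^0$-valued functionals \eqref{U0:dofg}--\eqref{U0:dofh} unisolvent through an $L^2$ pairing, and it rests entirely on the factorization $p=\mu q$ of Lemma~\ref{lem:mu}, a feature peculiar to the Worsey--Farin structure.
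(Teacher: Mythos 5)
Your proof is correct and follows essentially the same route as the paper's: the identical dimension count, the same sweep through the skeleton (edges via \eqref{U0:dofa}--\eqref{U0:dofd}, then each face in the order \eqref{U0:dofe}, \eqref{U0:dofh}, \eqref{U0:doff}, \eqref{U0:dofg} using Lemma~\ref{lem:mu} and the identity \eqref{more4} to decouple $[\varepsilon(u)]_{\Fn}$, then the interior via \eqref{U0:dofi}). The only difference is that you spell out details the paper leaves implicit, such as the Hermite factorization on edges and the explicit $\kappa = u$ pairings.
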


\begin{proof}
   The dimension of ${U}_{r+1}^0(\WFT)$ is $6r^3+12r+12$, which is equal to the sum of the given \dofs.  

   Let $u \in {U}_{r+1}^0(\WFT)$ such that it vanishes on the \dofs\  \eqref{dof:U0}. On each edge $e \in \Delta_1(T)$, $u|_{e}=0$ by \eqref{U0:dofa}-\eqref{U0:dofc}. Furthermore, $\Grad \, u|_e=0$ by \eqref{U0:dofb} and \eqref{U0:dofd}. Hence on any face $F \in \Delta_2(T)$, we have $u_\F \in [\mathring{S}_{r+1}^0(F^{ct})]^2$. Then with \dofs\  \eqref{U0:dofe}, $u_\F= 0$ on $F$. 
   Now with Lemma \ref{lem:mu} applied to $u_\F \in S_{r+1}^0(T^{wf}) \otimes \bV_2$, we have $\partial_{\bn} u_\F \in S_{r}^0(F^{ct}) \otimes \bV_2$. In addition, since $\Grad \, u_\F|_{\partial F} = 0$, it follows that $\partial_{\bn} u_\F \in [{\cR}_{r}^0(F^{ct})]^2$ and with \eqref{U0:dofh}, we have $\partial_{\bn} u_\F=0$. 
   

   Using the identity \eqref{more4}, we have $2 [\varepsilon(u)]_{\Fn} = \partial_{\bn} u_\F + \Grad_\F(u \cdot \bn_\F) = \Grad_\F(u \cdot \bn_\F)$. With $u \cdot \bn_\F \in S_{r+1}^0(F^{ct})$, we have in \eqref{U0:doff},  $[\varepsilon(u)]_{\Fn} = 0$ and thus $u \cdot \bn_\F = 0$ on $F$. 
   Now similar to $u_\F$, with Lemma \ref{lem:mu} applied to $u \cdot \bn_\F$, we have $\partial_{\bn} (u \cdot \bn_\F) \in {\cR}_{r}^0(F^{ct})$ and with \eqref{U0:dofg}, we have $\partial_{\bn} (u \cdot \bn_\F)=0$. 


   Since $u|_{\partial T} =0$, all the tangential derivatives of $u$ vanish. With $\partial_{\bn} (u \cdot \bn_\F)=0$ and $\partial_{\bn} u_\F=0$, we conclude that $\Grad \, u|_{\partial T} =0$. Thus $u \in \mathring{U}_{r+1}^0(\WFT)$, and \eqref{U0:dofi} shows that $u$ vanishes.
\end{proof}

\subsection{Dofs of \texorpdfstring{$U^1$}{U1} space}
Before giving the \dofs\  of the space $U^1$ we need preliminary results to see the continuity of the functions involved. In the following lemmas, we use the jump 
operator $\jmp{\cdot}$ and the set of internal edges of a split face $\Delta_1^I(F^{ct})$ 
given in Section \ref{subsec:WFconstruct}. 
The proofs of the next four results are found in the appendix.

\begin{lemma} \label{lem:symcts1}
    Let $\sigma \in V_{r}^2(\WFT) \otimes \bV$ with $\skw (\sigma) = 0$. If $\bn_\F '\sigma \ell=0$ on $\partial T$ for some $\ell \in \mathbb{R}^3$, 
   then $\sigma_{\F \ell} \in V_{{\rm div}, r}^1(F^{ct})$ on each $F \in \Delta_2(T)$.
\end{lemma}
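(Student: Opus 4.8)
The plan is to unwind what membership $\sigma_{\F\ell}\in V_{{\rm div},r}^1(F^{ct})$ actually requires and reduce it to a single scalar identity along each interior edge of the Clough--Tocher split. By definition this membership means two things: that $\sigma_{\F\ell}$ is a tangential field whose restriction to every CT triangle $Q\in F^{ct}$ is a polynomial of degree $\le r$, and that it lies in $H({\rm div}_\F,F)$, i.e. its in-face normal component is single-valued across each interior edge $e\in\Delta_1^I(F^{ct})$. The first is immediate, since $\sigma_{\F\ell}=\sum_{i=1}^2(\ell'\sigma\bt_i)\bt_i'$ is a fixed linear combination of the (piecewise polynomial) entries of $\sigma$ and is tangential by construction. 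So the whole content lies in the edge continuity. Because $\bs_e=\bn_\F\times\bt_e$ is tangent to $F$, a one-line computation using $\sigma=\sigma'$ gives $\sigma_{\F\ell}\cdot\bs_e=\bs_e'\sigma\ell$, and the $H({\rm div}_\F)$ requirement is precisely continuity of this quantity across $e$. Thus it suffices to prove that $\bs_e'\sigma\ell$ has no jump across each interior edge $e=\partial Q_1\cap\partial Q_2$.

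Fix such an edge and set $J:=\sigma|_{Q_1}-\sigma|_{Q_2}$ on $e$; the goal becomes $\bs_e'J\ell=0$. Here the geometry of the Worsey--Farin split is essential: the only interior face of $\WFT$ containing $e$ is $f=\mathrm{conv}(e,z_T)$ (every other $2$-face through $e$ lies in the plane of $F$ and is therefore a boundary triangle), and $f$ separates the two sub-tetrahedra $\mathrm{conv}(Q_1,z_T)$ and $\mathrm{conv}(Q_2,z_T)$. Since $\sigma\in V_r^2(\WFT)\otimes\bV$ has continuous normal part $\sigma\bn_f$ across $f$, and each sub-tetrahedron carries a single polynomial whose trace on $e$ is the same whether taken through the face triangle $Q_i$ or through $f$, restricting this continuity to $e$ yields $J\bn_f=0$ on $e$.

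Next I would bring in the two hypotheses. Symmetry of $\sigma$ makes $J$ symmetric. The boundary condition $\bn_\F'\sigma\ell=0$ on $\partial T$ holds on both $Q_1$ and $Q_2$ (which share the outward normal $\bn_\F$), so taking traces on $e$ and subtracting gives $\bn_\F'J\ell=0$, equivalently $\ell'J\bn_\F=0$ by symmetry. Finally, since $e\subset f$ we have $\bn_f\perp\bt_e$, so $\bn_f\in{\rm span}(\bn_\F,\bs_e)$; writing $\bn_f=a\,\bn_\F+b\,\bs_e$ with $b\ne0$ (because $f$ is genuinely transverse to $F$, its apex $z_T$ being an interior point), I dot $J\bn_f=0$ with $\ell$:
\[
0=\ell'(J\bn_f)=a\,(\ell'J\bn_\F)+b\,(\ell'J\bs_e)=b\,(\ell'J\bs_e).
\]
Hence $\ell'J\bs_e=0$, i.e. $\bs_e'J\ell=0$ by symmetry, which is exactly the edge continuity needed.

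The step I expect to be the main obstacle is the geometric bookkeeping in the second paragraph: confirming that the unique interior face of the refinement through $e$ is $\mathrm{conv}(e,z_T)$, that its normal $\bn_f$ is transverse to $F$ so that the coefficient $b$ is nonzero, and that the trace of $\sigma$ on $e$ taken from inside a sub-tetrahedron coincides with the trace taken from the face triangle $Q_i$. Once this local picture is secured, the reduction of $H({\rm div}_\F)$-continuity to the single relation $\ell'J\bn_f=0$ combined with the boundary condition and symmetry is short and purely algebraic.
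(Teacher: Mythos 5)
Your proof is correct and takes essentially the same route as the paper's: both reduce the claim to continuity of $\bs_e'\sigma\ell$ across each interior edge $e$, use the $H(\Div)$-continuity of $\sigma\bn_f$ across the internal face $f$ containing $e$ together with symmetry to get single-valuedness of $\ell'\sigma\bn_f$ on $e$, and then kill the $\bn_\F$-component via the hypothesis $\bn_\F'\sigma\ell=0$ after decomposing $\bn_f$ in the basis $\{\bn_\F,\bs_e\}$. The only difference is cosmetic: you state explicitly the transversality $\bn_f\cdot\bs_e\neq 0$ that the paper's proof uses implicitly when dividing out that coefficient.
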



\begin{lemma} \label{lem:curlcts}
    Let $w \in V^1_{r-1}(\WFT) \otimes \bV$ such that $w' \in V^2_{r-1}(\WFT) \otimes \bV$. If $w_{\Fn} =0$
    on some $F\in \Delta_2(T)$, then we have 
    \begin{equation}
        \rev{\jmp{\bt_s'w \bn_f}_e = 0}; \quad 
        \jmp{\bs_e' w \bs_e}_e = 0, \qquad \text{ for all } e \in \Delta_1^I(F^{ct}).
    \end{equation}
   On the other hand, if $w_\FF = 0$ on $F$, then we have
    \begin{equation}
        \label{eqn:curlcts2}
        \jmp{\bt_e'w \bn_f}_e = 0; \quad 
        \jmp{\bt_e' w \bn_F}_e = 0, \qquad \text{ for all } e \in \Delta_1^I(F^{ct}).
    \end{equation}
\end{lemma}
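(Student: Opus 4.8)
The plan is to localize at a single interior edge $e\in\Delta_1^I(\Fct)$, to transfer the jump across $e$ into a jump across the unique interior face of $\WFT$ meeting $e$, and then to read off the vanishing from the interface continuity encoded in the spaces together with the prescribed trace on $F$. Concretely, fix $e\in\Delta_1^I(\Fct)$; by the Worsey--Farin construction $e$ joins $m_\F$ to a vertex $v$ of $F$, and there is exactly one interior face $f\in\Delta_2(\WFT)$ containing $e$, namely the triangle through $z_T$, $m_\F$ and $v$. Let $K_1,K_2\in\WFT$ be the two tetrahedra sharing $f$, labelled so that $Q_1=\bar K_1\cap F$ and $Q_2=\bar K_2\cap F$ are the two triangles of $\Fct$ meeting at $e$. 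Since $w$ is polynomial on each $K_i$, for fixed vectors $a,b$ the traces $(a'wb)|_{Q_1}$ and $(a'wb)|_{Q_2}$ are the restrictions to $e$ of $(a'wb)|_{K_1}$ and $(a'wb)|_{K_2}$; hence $\jmp{a'wb}_e$ is, up to the fixed factor $\bs_e$, the jump of the scalar $a'wb$ across $f$ evaluated on $e$, and every claim becomes a statement about the matrix $J:=(w|_{K_1}-w|_{K_2})|_e$.

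Next I would determine the structure of $J$. Membership $w\in V^1_{r-1}(\WFT)\otimes\bV$ gives continuity of the tangential trace across the interior face $f$, so $\bt'w$ matches from $K_1$ and $K_2$ for every $\bt$ tangent to $f$; in particular $\bt_e'J=\bt_s'J=0$. Membership $w'\in V^2_{r-1}(\WFT)\otimes\bV$ gives continuity of $w\bn_f$ across $f$, i.e. $J\bn_f=0$. The vanishing of the two tangent rows leaves only the $\bn_f$-row, forcing $J=\bn_f\,v'$, and then $J\bn_f=\bn_f\,(v\cdot\bn_f)=0$ gives $v\cdot\bn_f=0$. From $\bt_e'J=0$ the identities with $\bt_e$ on the left follow at once: $\jmp{\bt_e'w\bn_f}_e=0$ and $\jmp{\bt_e'w\bn_\F}_e=0$. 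This settles the first identity in each part and the second identity of \eqref{eqn:curlcts2}, which thus come for free from the interface continuity across $f$.

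It remains to treat $\jmp{\bs_e'w\bs_e}_e$ under the hypothesis $w_\Fn=0$. Writing $J=\bn_f v'$ with $v\cdot\bn_f=0$ and using $\bs_e\cdot\bt_e=0$, one computes $\jmp{\bs_e'w\bs_e}_e=(\bs_e\cdot\bn_f)(\bs_e\cdot\bt_s)(v\cdot\bt_s)$. The prescribed trace $w_\Fn=0$ holds on $F$ from both $Q_1$ and $Q_2$, so $\bn_\F' J\,\bt=0$ along $e$ for every $\bt$ tangent to $F$; taking $\bt=\bs_e$ gives $(\bn_\F\cdot\bn_f)(\bs_e\cdot\bt_s)(v\cdot\bt_s)=0$. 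The in-plane orthonormal frames $(\bs_e,\bn_\F)$ and $(\bt_s,\bn_f)$ of the plane $\bt_e^\perp$ differ only by the dihedral rotation through the angle $\theta$ between $F$ and $f$, so $\bn_\F\cdot\bn_f=\bs_e\cdot\bt_s=\cos\theta$ and $\bs_e\cdot\bn_f=\sin\theta$. Hence the boundary relation reads $\cos^2\theta\,(v\cdot\bt_s)=0$, whence $\cos\theta\,(v\cdot\bt_s)=0$ and therefore $\jmp{\bs_e'w\bs_e}_e=\sin\theta\cos\theta\,(v\cdot\bt_s)=0$.

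The step I expect to be the main obstacle is this last frame reconciliation. The boundary condition on $F$ only controls the components of $w$ contracted against $\bn_\F$, whereas the target $\bs_e'w\bs_e$ is naturally expressed in the $f$-adapted frame $(\bt_e,\bt_s,\bn_f)$; the two are linked only through the shared edge direction $\bt_e$ and the dihedral angle between $F$ and $f$. Organising this bookkeeping—together with the observation that $e\subset\bar F\cap\bar f$, so that both the interface continuity across $f$ and the boundary trace on $F$ are simultaneously available along $e$—is the delicate part, while each individual identity reduces to a short contraction once $J=\bn_f v'$ with $v\cdot\bn_f=0$ has been established.
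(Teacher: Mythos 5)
Your overall skeleton matches the paper's proof: localize at $e$, pass to the unique interior face $f$ of $\WFT$ containing $e$, determine the algebraic structure of the jump matrix $J=(w|_{K_1}-w|_{K_2})|_e$ from the two conformity hypotheses, and then contract against the trace condition on $F$ in the frame $(\bt_e,\bt_s,\bn_f)$. However, you have \emph{transposed both conformity conditions}, and this breaks the argument. In this paper the differential operators act row by row, so $A\otimes\bV$ means the \emph{rows} of the matrix field lie in $A$ (this is exactly how the paper uses these memberships, e.g.\ in the proof of Lemma \ref{lem:symcts1}, where $\sigma\in V_r^2(\WFT)\otimes\bV$ is used to conclude that $\sigma\bn_f$ is single-valued). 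Hence $w\in V^1_{r-1}(\WFT)\otimes\bV$ gives continuity of $w\bt$ for $\bt$ tangent to $f$, i.e.\ $J\bt_e=J\bt_s=0$ (tangent vectors on the \emph{right}), while $w'\in V^2_{r-1}(\WFT)\otimes\bV$ gives continuity of $\bn_f'w$, i.e.\ $\bn_f'J=0$ (normal on the \emph{left}). You asserted the opposite pairing ($\bt_e'J=\bt_s'J=0$ and $J\bn_f=0$), which would be correct for the hypotheses ``$w\in V^2\otimes\bV$ and $w'\in V^1\otimes\bV$'' but not for the ones given. Consequently the jump matrix is $J=v\,\bn_f'$ with $v\cdot\bn_f=0$, the transpose of your $J=\bn_f v'$.

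This is not a cosmetic relabeling, because the lemma is not transpose-symmetric. With the correct structure, $\jmp{\bt_e'w\bn_f}_e=\bt_e'Jv\cdot$-type contractions give $\bt_e'J\bn_f=v\cdot\bt_e$, which the interface continuity across $f$ leaves completely unconstrained; so your claim that the identities in \eqref{eqn:curlcts2} ``come for free'' from conformity alone is false. A telling symptom is that your argument never invokes the hypothesis $w_{\FF}=0$ at all, yet without it the conclusion fails (a field whose jump across $f$ is $\phi\,\bt_e\bn_f'$ is compatible with both conformity conditions but has $\jmp{\bt_e'w\bn_f}_e=\phi$). The paper obtains \eqref{eqn:curlcts2} precisely by using $w_{\FF}=0$: this gives $\jmp{\bt_e'w\bs_e}_e=0$, and expanding $\bs_e=\alpha_1\bn_f+\beta_1\bt_s$ together with $\jmp{w\bt_s}_e=0$ yields $\alpha_1\jmp{\bt_e'w\bn_f}_e=0$ with $\alpha_1\neq0$; then $\bn_\F=\alpha_2\bn_f+\beta_2\bt_s$ gives $\jmp{\bt_e'w\bn_\F}_e=0$. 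Likewise, in the first part the hypothesis $w_{\Fn}=0$ must be contracted with $\bn_\F$ on the \emph{left}: $0=\jmp{\bn_\F'w\bs_e}_e=\alpha_1\beta_2\jmp{\bt_s'w\bn_f}_e$ after the continuity relations kill the other three terms, whence $\jmp{\bt_s'w\bn_f}_e=0$ and then $\jmp{\bs_e'w\bs_e}_e=0$; your dihedral-angle computation is the analogous contraction but performed on the transposed matrix, so it does not apply to the actual jump. (Note, incidentally, that under the correct structure the first part delivers $\jmp{\bt_s'w\bn_f}_e=0$ rather than the $\jmp{\bt_e'w\bn_f}_e=0$ printed in the statement --- the latter appears to be a typo, and only the $\jmp{\bs_e'w\bs_e}_e$ identity is used later, in Lemma \ref{lem:curlFF} --- so a literal proof of the printed first identity of part one cannot succeed either.) To repair your proof, rederive the structure $J=v\bn_f'$, $v\cdot\bn_f=0$, and then expand the trace hypotheses, which always carry $\bn_\F$ or tangent vectors of $F$ on the appropriate sides, in the frame $(\bt_e,\bt_s,\bn_f)$ as above.
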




\begin{lemma}\label{cor:curliden}
Let $T$ be a tetrahedron, and let $\ell, m$ be two tangent vectors to a face $F\in \Delta_2(T)$ 
such that $\ell \cdot m = 0$ and $\ell \times m = \bn_\F$. 
    Let $u \in \Lag_{r}^1(\WFT) \otimes \bV$ for some $r\ge 0$. If $u_\FF = 0$ on some $F\in \Delta_2(T)$, then 
    \begin{alignat}{2}
\label{eqn:curliden1}
    \jmp{\ell'(\bcurl u )m}_e & = -\jmp{\Grad_\F(u_{\Fn} \cdot \ell) \cdot \ell}_e, & \qquad \text{ for all } e \in \Delta_1^I(F^{ct}), \\
    \label{eqn:curliden2}
    \jmp{\ell'  (\bcurl u) \ell}_e & = -\jmp{\Grad_\F(u_{\Fn} \cdot \ell) \cdot m}_e, & \qquad \text{ for all } e \in \Delta_1^I(F^{ct}).
\end{alignat}
On the other hand, if $u_\nF = 0$ on $F$, then
\begin{equation}
    \label{eqn:curliden3}
    \jmp{\bn_F'  (\bcurl u) \ell}_e  = \jmp{(\Grad_\F u_{nn}) \cdot m}_e, \qquad \text{ for all } e \in \Delta_1^I(F^{ct}).
\end{equation}
\end{lemma}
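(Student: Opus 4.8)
The plan is to treat all three identities by one mechanism and to reduce each matrix expression $a'(\bcurl u)b$ to the curl of an honest vector field, so that the pointwise identity \eqref{more3} becomes available. Since $\bcurl$ acts on $u$ row by row, collecting the rows of $u$ against the fixed vector $a$ produces a vector field $v$ (a constant linear combination of the rows of $u$) with $a'(\bcurl u)b = b\cdot\bcurl v$. For the first two identities I would build $v$ from $\ell$ and for the third from $\bn_\F$; in each case the entries of $v$ are drawn from a single row (or column) of $u$ in the orthonormal frame $(\ell,m,\bn_\F)$, so that the relevant scalar products $v\cdot\bn_\F$, $v\cdot\ell$, $v\cdot m$ are individual entries of $u$ in that frame.

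Next I would apply \eqref{more3}, namely $\bcurl v = \bn_\F(\curl_\F v_\F) + \rot_\F(v\cdot\bn_\F) + \bn_\F\times\partial_n v$, which holds piecewise on each triangle $Q_i$ of $F^{ct}$, and dot it against the appropriate tangential test vector ($m$ for the first and third identities, $\ell$ for the second). The normal term $\bn_\F(\curl_\F v_\F)$ is annihilated by any tangential vector and drops out. Using $\ell\times m = \bn_\F$ together with the definition of $\rot_\F$, the two surviving terms are precisely one surface-gradient contribution of the form $\pm\Grad_\F(v\cdot\bn_\F)\cdot\ell$ or $\pm\Grad_\F(v\cdot\bn_\F)\cdot m$ — which is the claimed right-hand side, since $v\cdot\bn_\F$ is, up to the $\Fn$/$\nF$ labelling fixed in Section \ref{subsec:Matrix}, the entry $u_{\Fn}\cdot\ell$ (first two identities) or $u_{nn}$ (third identity) — together with one normal-derivative contribution $\partial_n(v\cdot s)$ with $s$ tangential.

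The crux is to remove this normal-derivative term after taking the jump $\jmp{\cdot}_e$. I would observe that the scalar $g := v\cdot s$ is, in the frame $(\ell,m,\bn_\F)$, a single entry of $u$: under the hypothesis $u_{\FF}=0$ it is a tangential–tangential entry, while under $u_{\nF}=0$ it is exactly the mixed entry that this hypothesis annihilates. In every case $g\equiv 0$ on all of $F$. Since $u\in\Lag_r^1(\WFT)\otimes\bV$ has scalar entries in $\Lag_r^0(\WFT)$, $g$ is a continuous piecewise polynomial vanishing on $F$, so Lemma \ref{lem:mu} applies and gives that $\Grad g$, and in particular $\partial_n g$, is continuous across every internal edge; hence $\jmp{\partial_n(v\cdot s)}_e = 0$ for all $e\in\Delta_1^I(F^{ct})$. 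Only the surface-gradient term then survives the jump, which yields the stated identities after reading off the sign and test direction in each case. This matching of each hypothesis to the entry it forces to vanish, and the use of Lemma \ref{lem:mu} to discard the otherwise uncontrolled normal derivative across $e$, is the main obstacle; the remaining work is the bookkeeping of signs and of the $\Fn$/$\nF$ labels dictated by the matrix-curl and surface-operator conventions of Section \ref{subsec:Matrix}.
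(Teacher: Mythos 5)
Your proposal is correct and lands on exactly the pointwise identities that drive the paper's proof, but it derives them by a different mechanism. The paper's proof is a brute-force component computation: it expands $\bcurl u$ entrywise in the frame $(\bt_1,\bt_2,\bn_\F)$ and regroups terms by hand to reach \eqref{eqn:Oner}--\eqref{eqn:Threeer}, each of which expresses $a'(\bcurl u)b$ as a surface-gradient term plus the normal derivative of a scalar that the hypothesis forces to vanish on $F$. Your route gets the same decompositions structurally: the row-wise reduction $a'(\bcurl u)b = b\cdot\bcurl(u'a)$ is valid (since $\bcurl$ acts row by row, the rows of $\bcurl u$ are the curls of the rows of $u$), and dotting \eqref{more3} with a tangential test vector kills the $\bn_\F(\curl_\F v_\F)$ term and leaves precisely one $\rot_\F(v\cdot\bn_\F)$ contribution (a rotated surface gradient) and one $\partial_n(v\cdot s)$ contribution. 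The endgame is identical in both proofs: the scalar under $\partial_n$ is a continuous piecewise polynomial vanishing on $F$ (a tangential--tangential combination under $u_{\FF}=0$; the entry $u_{\nF}\cdot m$ under $u_{\nF}=0$), so the first case of Lemma \ref{lem:mu} gives $\jmp{\partial_n(v\cdot s)}_e=0$ and only the surface-gradient term survives the jump. What your approach buys is the elimination of the index bookkeeping by reusing Lemma \ref{lem:iden}; what the paper's buys is a self-contained verification that does not lean on \eqref{more3}.

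One caution about the sign bookkeeping you deferred. Carrying out your computation for \eqref{eqn:curliden2} (dot \eqref{more3} with $\ell$, using $\ell\cdot\rot_\F\phi = \Grad_\F\phi\cdot m$ and $\ell\times\bn_\F=-m$) yields
\[
\ell'(\bcurl u)\ell \;=\; -\,\partial_n\bigl(\ell' u_{\FF}\, m\bigr) \;+\; \Grad_\F(u_{\Fn}\cdot\ell)\cdot m ,
\]
so after taking jumps you will get $\jmp{\ell'(\bcurl u)\ell}_e = +\jmp{\Grad_\F(u_{\Fn}\cdot\ell)\cdot m}_e$, the \emph{opposite} sign to the one displayed in \eqref{eqn:curliden2}. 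This agrees with the paper's own intermediate identity \eqref{eqn:Twoer}, so the mismatch is a typo in the lemma statement rather than a flaw in your plan; it is harmless where the lemma is invoked (there the quantity on the right-hand side is separately known to vanish), but do not be derailed when your carefully tracked signs disagree with the stated \eqref{eqn:curliden2}.
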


\begin{lemma} \label{lem:curlFF}
  Suppose $u \in {U}_{r}^1(\WFT)$ and $w = (\curl u)'$ are such that 
  $u_{\FF}$ and $w_{\Fn}$ vanish on a face $F\in \Delta_2(T)$. Then
  $w_{\FF} - \Grad_\F \rev{u_{n\F}^\perp}$ is continuous on $F$. Furthermore, if $u = \varepsilon(v)$ for some $v \in {U}_{r+1}^0(\WFT)$, then the following identity holds:
  \begin{equation}\label{eqn:curlFFiden}
      w_{\FF} = [(\curl \varepsilon (v))']_{\FF} =  \Grad_\F \rev{u_{n\F}^\perp} + \Grad_\F(\partial_n v_\F \times \bn_\F).
  \end{equation}
\end{lemma}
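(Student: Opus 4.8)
The plan is to establish the two assertions by different means. For the continuity claim I would fix an interior Clough--Tocher edge $e\in\Delta_1^I(F^{ct})$ together with the adapted orthonormal frame $(\bt_e,\bs_e,\bn_\F)$, and reduce the statement to showing $\jmp{w_\FF-\Grad_\F u_{\Fn}^\perp}_e=0$ entry by entry. Since $u$ is symmetric with $u_\FF=0$ on $F$ and $u\in\Lag_r^1(\WFT)\otimes\bV$, the curl--jump identities of Lemma~\ref{cor:curliden} express the jumps of the entries of $w_\FF=[(\bcurl u)']_\FF$ across $e$ through jumps of the surface derivatives $\Grad_\F(u_{\Fn}\cdot\bt_e)$ and $\Grad_\F(u_{\Fn}\cdot\bs_e)$. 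On the other side, unwinding Definition~\ref{def:Vperp} and the matrix convention for $\Grad_\F$ shows that the entries of $\Grad_\F u_{\Fn}^\perp$ are built from exactly these surface derivatives. Matching the two, the off-diagonal contributions cancel identically, and the two diagonal contributions collapse to a single scalar jump.

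Closing that scalar jump is the crux. After the cancellation it is the jump $\jmp{\bt_e'w\bt_e}_e$ together with normal-derivative terms $\partial_n$ of the entries of $u_\FF$. Because those entries vanish on all of $F$, Lemma~\ref{lem:mu} shows their normal derivatives are continuous on $F$, so the corresponding jumps drop out. The remaining piece $\jmp{\bt_e'w\bt_e}_e$ cannot be controlled from $u\in H^1(T;\bS)$ alone; here the defining smoothness $w=(\bcurl u)'\in V_{r-1}^1(\WFT)\otimes\bV$ of $U_r^1(\WFT)$ is essential, since $\bt_e'w\bt_e$ is part of the tangential trace of $w$ on the internal Worsey--Farin face $f$ containing $e$, hence single-valued across $f$. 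As the two tetrahedra of $\WFT$ abutting $e$ are separated precisely by $f$, this tangential continuity transfers to $\jmp{\bt_e'w\bt_e}_e=0$. This translation of continuity across $f$ into a jump condition across the Clough--Tocher edge $e\subset F$ is the mechanism underlying Lemma~\ref{lem:curlcts} (in the regime $w_{\Fn}=0$), and it is where the genuinely three-dimensional geometry of the macro-element enters; I expect it to be the main obstacle.

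For the identity under $u=\varepsilon(v)$ I would substitute directly. Identity~\eqref{more2} gives $w_\FF=[(\bcurl\varepsilon(v))']_\FF=\frac12\Grad_\F(\bcurl v)_\F$, and~\eqref{more3} decomposes $(\bcurl v)_\F=\rot_\F(v\cdot\bn_\F)+\bn_\F\times\partial_n v$. Independently, $u_{\Fn}=\varepsilon(v)_{\Fn}$, and~\eqref{more4} gives $2[\varepsilon(v)]_{n\F}=\Grad_\F(v\cdot\bn_\F)+\partial_n v_\F$; applying the perp and using $(\Grad_\F\phi)^\perp=\rot_\F\phi$ together with $v^\perp=v\times\bn_\F$ from~\eqref{iden:Vperp} then expresses $u_{\Fn}^\perp$ through $\rot_\F(v\cdot\bn_\F)$ and $\partial_n v_\F\times\bn_\F$. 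Inserting both expressions into $w_\FF$, the $\Grad_\F\rot_\F(v\cdot\bn_\F)$ terms combine and leave exactly the stated multiple of $\Grad_\F(\partial_n v_\F\times\bn_\F)$, which is~\eqref{eqn:curlFFiden}.
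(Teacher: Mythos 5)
Your proposal is correct, and it is worth separating the two halves. The identity under $u=\varepsilon(v)$ is proved exactly as in the paper: \eqref{more2} and \eqref{more3} to write $2w_\FF=\Grad_\F\big(\rot_\F(v\cdot\bn_\F)+\bn_\F\times\partial_n v_\F\big)$, then \eqref{more4} together with $(\Grad_\F\phi)^\perp=\rot_\F\phi$ and $v^\perp=v\times\bn_\F$ for $u_{\Fn}^\perp$, and subtraction; no difference there. The continuity half follows the paper's framework — entry-by-entry jump analysis in the frame $(\bt_e,\bs_e,\bn_\F)$ across each $e\in\Delta_1^I(F^{ct})$, with Lemma \ref{cor:curliden} producing the off-diagonal cancellation, Lemma \ref{lem:mu} removing the $\partial_n$-terms of the entries of $u_\FF$, and the single-valuedness of $w\bt_e$ across the internal face $f$ (available because $w=(\bcurl u)'\in V_{r-1}^1(\WFT)\otimes\bV$ by Theorem \ref{thm:charU1}) closing $\jmp{\bt_e'w\bt_e}_e=0$ — but it genuinely diverges on one component. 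The paper disposes of the $(\bs_e,\bs_e)$ entry of $w_\FF$ by Lemma \ref{lem:curlcts}, and this is precisely where the hypothesis $w_{\Fn}=0$ and the normal continuity of $w'=\bcurl u$ enter its proof; you instead apply the diagonal identity \eqref{eqn:curliden2} a second time (with $\ell=\bs_e$, $m=-\bt_e$), converting $\jmp{\bs_e'w\bs_e}_e$ into the jump of $\partial_{\bt_e}(u_{\Fn}\cdot\bs_e)$, which vanishes because the trace of $u$ on $F$ is single-valued and $\bt_e$ is the edge direction. This works, and it buys something: your route never uses the hypothesis $w_{\Fn}=0$ nor the conclusions of Lemma \ref{lem:curlcts}, so in effect you establish the continuity statement under weaker assumptions; the paper's choice is more modular, since Lemma \ref{lem:curlcts} is needed anyway in Step 3 of Lemma \ref{lem:dofu1} and reusing it keeps the component bookkeeping short. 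One small attribution slip: you credit Lemma \ref{lem:curlcts} as the mechanism behind $\jmp{\bt_e'w\bt_e}_e=0$, but that jump comes from the raw tangential continuity of $w$ across $f$ (equation \eqref{eqn:curlcts} inside that lemma's proof, a direct consequence of $w\in V_{r-1}^1(\WFT)\otimes\bV$), not from the lemma's stated conclusions, which concern $\bt_e'w\bn_f$, $\bs_e'w\bs_e$, and $\bt_e'w\bn_\F$; this does not affect the validity of your argument.
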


In addition to \eqref{eqn:incF} in Lemma \ref{lem:incFiden}, we need another identity to proceed with our construction. The following result is shown in \cite[Lemma 5.8]{christiansen2020discrete}. 
\begin{lemma}\label{lem:inciden}
 Let $u$ be a symmetric matrix-valued function with $[(\bcurl u)']_{\FF} \bt |_{\partial F} = 0$ , $u|_{\partial F} =0$. Let $q \in \Rig(F)$ be defined in \eqref{eqn:rigid2}. Then 
 there holds
 \begin{equation}\label{eqn:incfn}
     \int_{F} (\Inc u)_{\Fn} \cdot q  = 0.
 \end{equation}

\end{lemma}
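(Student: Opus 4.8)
The plan is to reduce the claim to a single surface integration-by-parts and then let the two boundary hypotheses dispose of two different terms. Since $u$ is symmetric, I would first rewrite the integrand using identity~\eqref{id2}, which gives $(\Inc u)_{\Fn} = \curl_\F w_{\FF}$ with the abbreviation $w := (\bcurl u)'$. Thus for every $q\in\Rig(F)$ we have $\int_F (\Inc u)_{\Fn}\cdot q = \int_F (\curl_\F w_{\FF})\cdot q$, and it suffices to analyze the right-hand side.

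The second step is a Green's formula for $\curl_\F$ acting row by row on the tangential matrix field $w_{\FF}$. Starting from the scalar identity $\int_F(\curl_\F v)\phi = \int_F v\cdot\rot_\F\phi + \oint_{\partial F}\phi\,(v\cdot\bt)\,ds$ and summing over the two tangential rows, I expect to obtain
\begin{equation*}
  \int_F (\curl_\F w_{\FF})\cdot q
  = \int_F w_{\FF}:\rot_\F q + \oint_{\partial F}(w_{\FF}\bt)\cdot q\,ds .
\end{equation*}
The boundary integral vanishes at once, since the hypothesis $[(\bcurl u)']_{\FF}\bt|_{\partial F}=0$ is exactly $w_{\FF}\bt=0$ on $\partial F$. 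Hence only the bulk term $\int_F w_{\FF}:\rot_\F q$ survives, and I reduce the lemma to showing this vanishes on each generator of $\Rig(F)$ from~\eqref{eqn:rigid2}.

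For the translational generators $q=a\bt_1+b\bt_2$ one has $\rot_\F q=0$, so the bulk term is zero. For the rotational generator $q=(x\cdot\bt_1)\bt_2-(x\cdot\bt_2)\bt_1$, a direct computation with the definition of $\rot_\F$ yields $\rot_\F q = -(\bt_1\bt_1'+\bt_2\bt_2')$, the negative tangential identity, so that $w_{\FF}:\rot_\F q = -\tr_\F w_{\FF} = -\tr_\F w$. It then remains to prove $\int_F\tr_\F w=0$: using that $\tr_\F$ is transpose-invariant together with~\eqref{id3}, I get $\tr_\F w=\tr_\F\bcurl u=-\curl_\F(u_{\Fn})'$, and Stokes' theorem on $F$ turns its integral into $-\oint_{\partial F}(u_{\Fn})'\cdot\bt\,ds=-\oint_{\partial F}(\bn_\F' u\,\bt)\,ds$, which vanishes because $u|_{\partial F}=0$. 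Collecting the three cases gives $\int_F(\Inc u)_{\Fn}\cdot q=0$.

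The main obstacle I anticipate is bookkeeping rather than conceptual: deriving the matrix-valued $\curl_\F$–$\rot_\F$ Green's formula with its boundary term in exactly the right form, and correctly computing $\rot_\F$ of the rotational rigid mode. The conceptual point worth emphasizing is that the two hypotheses play complementary roles — $[(\bcurl u)']_{\FF}\bt|_{\partial F}=0$ eliminates the boundary term uniformly over $\Rig(F)$, while $u|_{\partial F}=0$ is precisely what kills the trace term produced by the single rotational generator.
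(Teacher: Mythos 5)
Your proof is correct, but there is no in-paper argument to compare it against: the paper does not prove Lemma~\ref{lem:inciden} at all, it simply quotes the result from \cite[Lemma 5.8]{christiansen2020discrete}. Your write-up therefore supplies a self-contained proof, and every step is consistent with the identities the paper does provide.

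Concretely: the reduction $(\Inc u)_{\Fn} = \curl_\F\big[(\bcurl u)'\big]_{\FF}$ is exactly \eqref{id2}, whose hypothesis (symmetry of $u$) you have; the row-by-row Green formula $\int_F (\curl_\F w_{\FF})\cdot q = \int_F w_{\FF}\colon\rot_\F q + \oint_{\partial F}(w_{\FF}\bt)\cdot q\,ds$ is the single-curl analogue of \eqref{eqn:incF} and follows, as you say, by summing the scalar identity over the two tangential rows; the hypothesis $[(\bcurl u)']_{\FF}\bt|_{\partial F}=0$ kills the boundary term for all $q\in\Rig(F)$ at once; $\rot_\F$ annihilates the two translational generators of \eqref{eqn:rigid2}; and your computation $\rot_\F\big((x\cdot\bt_1)\bt_2-(x\cdot\bt_2)\bt_1\big) = -(\bt_1\bt_1'+\bt_2\bt_2')$ is right, so the remaining bulk term is $-\int_F\tr_\F(\bcurl u)'$. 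The final chain — transpose-invariance of $\tr_\F$, then \eqref{id3} (which, like \eqref{id2}, holds because $u$ is symmetric), then Stokes' theorem giving $\oint_{\partial F}\bn_\F' u\,\bt\,ds$, which vanishes since $u|_{\partial F}=0$ — closes the argument. The only point worth making explicit in a final write-up is that \eqref{id3} sits in the ``$u$ symmetric'' block of Lemma~\ref{lem:iden}, so its use should be flagged as relying on the symmetry hypothesis, just as you did for \eqref{id2}. Your structural observation is also the right one: the two boundary hypotheses act on disjoint pieces of the identity, one on the Green-formula boundary term, the other on the trace term generated solely by the rotational mode.
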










\begin{lemma}\label{lem:dofu1}
    A function $u \in {U}_{r}^1(\WFT)$, with $r \ge 3$, is fully determined by the following vertex degrees of freedom
   \begin{subequations} \label{dof:U1}
       \begin{alignat}{3}
       \label{U1:dofa}
       & u(a), && \quad a \in \Delta_0(T), && \quad \dofcnt{24}
       \\
       \intertext{the following edge \dofs\  on  all $e \in \Delta_1(T),$}
        \label{U1:dofb}
       & \int_{e}  u \colon \kappa, && \quad \kappa \in \sym[\cP_{r-2}(e)]^{3 \times 3}, 
       && \quad \dofcnt{36(r-1)}  \\
       \label{U1:dofc}
       & \int_{e}  (\bcurl u)' \bt_e \cdot \kappa, && \quad \kappa \in [\cP_{r-1}(e)]^3, 
       && \quad \dofcnt{18r}
       \intertext{the following face \dofs\  on all $F \in \Delta_2(T)$,}
       \label{U1:dofd}
       & \int_{F} ({\rm inc~}u)_{\FF} \colon  \kappa, && \quad \kappa \in Q_{r-2}^{\perp}, 
       && \quad \dofcnt{12(r-2)}\\
       \label{U1:dofe}
        & \int_\F ({\rm inc~}u)_{nn} \kappa, && \quad \kappa \in 
        \mathring{Q}^2_{r-2}(\Fct), 
        && \quad \dofcnt{6r^2-6r-12} \\
        \label{U1:doff}
       & \int_\F ({\rm inc~}u)_{\Fn} \cdot \kappa, && \quad \kappa \in V_{{\rm div}, r-2}^1(F^{ct})/\Rig(F) ,
       && \quad \dofcnt{12r^2-24r} \\      
       \label{U1:dofg}
        & \int_{F} u_{\FF} \colon \kappa, && \quad \kappa \in \varepsilon_\F([\mathring{S}_{r+1}^0(F^{ct})]^2), 
        && \quad \dofcnt{12(r^2-3r+2)} \\
        \label{U1:dofh}
        & \int_\F  ([(\bcurl u)']_{\FF} - \Grad_\F (\rev{u_{n\F}^\perp})) 
        \colon \kappa,
        && \quad \kappa \in \Grad_\F [(\cR_{r}^0(F^{ct})]^2,  
        && \quad \dofcnt{12(r^2-3r+2)} \\
        \label{U1:dofi}
      & \int_{F} u_{\Fn} \cdot \kappa, && \quad \kappa \in \Grad_\F ([\mathring{S}_{r+1}(F^{ct})]),
      && \quad \dofcnt{6(r^2-3r+2)} \\
      \label{U1:dofj}
      & \int_{F} u_{nn} \kappa, && \quad \kappa \in \cR_{r}^0(F^{ct}), 
      && \quad \dofcnt{6(r^2-3r+2)}
      \intertext{and the following interior \dofs,}
      \label{U1:dofk}
      & \int_T {\rm ~inc}(u) \colon {\rm ~inc}(\kappa), && \quad \kappa \in \mathring{U}_{r}^1(\WFT), && \quad \dofcnt{6r^3-27r^2+21r+18} \\
      \label{U1:dofl}
      & \int_T \rev{u} \colon \varepsilon(\kappa), && \quad \kappa \in \mathring{U}_{r+1}^0(\WFT), && \quad \dofcnt{6(r-1)(r-2)(r-3)}.
       \end{alignat}
   \end{subequations}  
\end{lemma}

\begin{proof}
 The dimension of ${U}_{r}^1(\WFT)$ is $12r^3-9r^2+15r+6$, which is equal to the sum of the given \dofs. Suppose that all \dofs\  \eqref{dof:U1} vanish for a $u \in U_r^1(\WFT)$. 

{\bf Step 0:}
Using the \dofs\  (\ref{U1:dofa}, \ref{U1:dofb}) and \eqref{U1:dofc}, 
we conclude
\begin{equation}\label{eqn:U1Step0}
    u|_e = 0, \quad (\bcurl u)'\bt|_e = 0, \qquad {\rm for~} e \in \Delta_1(T).
\end{equation}

{\bf Step 1:} We show $\Inc u \in \mathring{\cV}_{r-2}^2(\WFT)\otimes \mathbb{V}$.\smallskip\\
By \eqref{id4} and \eqref{eqn:U1Step0}, 
we have \[
0=\bn_\F'(\bcurl u )' \bt = (\curl_\F u_{\FF})\bt\quad \text{on $\partial F$ for each $F\in \Delta_2(T)$.}
\]
Since $u$ is symmetric and continuous, 
by \eqref{id1}, we see that $(\Inc u)_{nn} = \incF \, u_\FF$ 
with $u_\FF \in \mathring{Q}_{{\rm inc},r}^{1,s}(\Fct) \subset \mathring{Q}_{{\rm inc},r}^{1}(\Fct)$. Thus, the complex \eqref{elaseqsvenb} in Theorem \ref{thm:2delaseq} and
the \dofs\  \eqref{U1:dofe} yield 
\begin{equation}\label{eqn:Step1Intermediate}
(\Inc u)_{nn} =0\qquad \text{on each $F\in \Delta_2(T)$.}
\end{equation}

Next, Lemma \ref{lem:symcts1} (with $\ell = \bn_\F$ and $\sigma = \Inc u$) shows $(\Inc u)_{F n}\in V^1_{{\rm div},r-2}(F^{ct})$.  Therefore using the \dofs\  \eqref{U1:doff}
and \eqref{eqn:incfn} in Lemma \ref{lem:inciden},
we conclude $(\Inc u)_{\Fn} =0$.

The identities $(\Inc u)_{nn} =0$ and $(\Inc u)_{\Fn}=0$
yield $(\Inc u)\bn_\F=0$. So, by Lemma \ref{lem:symcts1} (with $\ell = \bt_1, \bt_2$), we see that $(\Inc u)_\FF \in V_{{\rm div},r-2}^1(F^{ct})\otimes \bV_2$.  In particular, since $(\Inc u)_\FF$ is symmetric,
there holds $(\Inc u)_\FF\in Q_{r-2}^1(F^{ct})$ (cf.~\eqref{eqn:Qr1}).
Thus by the \dofs\  \eqref{U1:dofd} and the definition of $Q^{\perp}_{r-2}(F^{ct})$ in Section \ref{sec:2dela}, we have $(\Inc u)_{\FF} \in L_{r}^1(\Fct) \otimes \bV_2$. Therefore, we  conclude $\Inc u \in \mathring{\cV}_{r-2}^2(\WFT)\otimes \mathbb{V}$.


{\bf Step 2:} 
We show $(\bcurl u)' \in \mathring{V}_{r-1}^1(\WFT)\otimes \mathbb{V}$.\smallskip\\
Using \eqref{eqn:Step1Intermediate} and \eqref{id1}, we have
$0 = (\Inc u)_{nn}=\incF u_{\FF}$.
Thus by the exact sequence \eqref{elaseqsvenb} in Theorem \ref{thm:2delaseq},
there holds $u_{\FF} = \varepsilon_\F (\kappa)$ for some $\kappa\in \mathring{S}_{r+1}^0(F^{ct})\otimes \bV_2$.  We then conclude
from the \dofs\  \eqref{U1:dofg} that $u_{\FF}=0$ on each $F\in \Delta_2(F)$.
Furthermore by \eqref{id4}, $[(\bcurl u)']_{Fn} = {\rm curl}_F u_{\FF} = 0$.


Since $(\bcurl u)' \in V_{r-1}^1(\WFT)\otimes \mathbb{V}$ by Theorem \ref{thm:charU1} 
and from \eqref{eqn:U1Step0}
\[
[(\bcurl u)']_{\FF} \bt_e|_e = (\bcurl u)'\bt_e|_e = 0, \qquad \text{for all } e \in \Delta_1(T), 
\]
we have $[(\bcurl u)']_{\FF} \in \mathring{V}^1_{\curl,r-1}(F^{ct})\otimes \bV_2$ on $F \in \Delta_2(T)$. In addition, by the identity $(\Inc u)_{Fn} = {\rm curl}_F[({\rm curl}\,u)']_{\FF}$
(cf.~\eqref{id2})
and $(\Inc u)_{\Fn} =0$ derived in {\bf Step 1}, there exists $\phi \in \mathring{\Lag}^0_{r}(\Fct) \otimes \bV_2$ such that $\Grad_\F \phi = [(\bcurl u)']_{\FF}$. With Lemma \ref{lem:curlFF}, we further have $\phi - \rev{u_{n\F}^\perp} \in [\cR^0_{r}(\Fct)]^2$. 
Therefore, using the \dofs\  \eqref{U1:dofh} we conclude
\begin{equation}\label{eqn:uFn}
    [(\bcurl u)']_{\FF} = \Grad_\F  \rev{u_{n\F}^\perp}.
\end{equation}

Since with \eqref{id3}, we have
\[-\curl_\F (u_{\Fn})' = \tr_\F \bcurl u = \tr_\F (\bcurl u)' = \tr_\F (\bcurl u)'_{\FF}.\]
With \eqref{eqn:uFn} and \eqref{iden:Vperp}, we have 
\[
-\curl_\F (u_{\Fn})' = \tr_\F (\bcurl u)'_{\FF} = \DivF \rev{u_{n\F}^\perp} = \curl_\F \rev{(u_{n\F})}\rev{= \curl_\F (u_{\Fn})'},
\]
and this implies that $\curl_\F (u_{\Fn})' =0$. Since $u_{\Fn} \in \mathring{\Lag}^1_{r}(\Fct)$,
the exact sequence \eqref{2dbdryseq2} yields $u_{\Fn} \in \Grad_\F ([\mathring{S}_{r+1}(F^{ct})])$.
Therefore by \eqref{U1:dofi}, we have $u_{\Fn} = 0$. Now with \eqref{eqn:uFn} and $u_{\Fn} = 0$, 
we have $[(\bcurl u)']_{\FF} = 0$ and so $(\curl u)'\in \mathring{V}_{r-1}^1(T^{wf})\otimes \mathbb{V}$.

{\bf Step 3:} We show $u\in \mathring{H}^1(T;\mathbb{S})$.\\
From {\bf Step 2}, we already see that $u_{\FF}=0$ and $ u_{\Fn} =0$, so we 
only need to show $u_{nn} =0$. Since $(\bcurl u)' \in \mathring{V}_{r-1}^1(\WFT)\otimes \mathbb{V}$ with $\bcurl u \in V_{r-1}^2(\WFT)\otimes \mathbb{V}$ and $[(\bcurl u)']_\FF = 0$ on $F$, then by \eqref{eqn:curlcts2}, we have 
\begin{equation} 
    \jmp{\bt_e' (\bcurl u)' \bn_\F }_{e} = 0, \qquad \text{for all } e \in \Delta_1^I(\Fct).
\end{equation}
We know that $u \in \Lag_{r}^1(\WFT)$, $u_{\Fn} =0$ and by \eqref{eqn:curliden3} in Lemma \ref{cor:curliden} with $\ell = \bt_e$, $m = \bs_e$,
\[
0 = \jmp{\bt_e' (\bcurl u)' \bn_\F }_{e} = \jmp{\bn_\F' (\bcurl u) \bt_e }_{e} = \jmp{(\Grad_F u_{nn}) \cdot \bs_e}.
\]
Therefore, we have $u_{nn} \in \cR_{r}^0(F^{ct})$ and \eqref{U1:dofj} implies $u_{nn} =0$ on $F$. Thus $u|_{\partial T} =0$. 

{\bf Step 4:}\\
Using the second characterization of Theorem \ref{thm:charU1}, $u \in \mathring{U}_r^1(\WFT)$. Hence \eqref{U1:dofk} implies $\Inc u =0$ on $T$ and using the exactness of the sequence \eqref{eqn:elseqb} and the \dofs\  of \eqref{U1:dofl}, we see that $u=0$ on $T$.
\end{proof}


\subsection{Dofs of the \texorpdfstring{$U^2$}{U2} and \texorpdfstring{$U^3$}{U3} spaces}
\begin{lemma}\label{lem:dofu2}
A function $u \in {U}_{r-2}^2(\WFT)$, with $r \ge 3$, is fully determined by the following \dofs:
      \begin{subequations} \label{dof:U2}
       \begin{alignat}{3}
       \label{U2:dofa}
       & \int_{F} u_{\FF} \colon  \kappa, && \qquad \kappa \in Q_{r-2}^{\perp}, \; F \in \Delta_2(T), && \qquad \dofcnt{12(r-2)},\\
       \label{U2:dofb}
        & \int_\F u_{nn} \kappa, && \qquad \kappa \in V_{r-2}^2(F^{ct}), \; F \in \Delta_2(T), && \qquad \dofcnt{6r^2-6r}, \\
        \label{U2:dofc}
       & \int_\F u_{\nF} \cdot \kappa, && \qquad \kappa \in V_{{\rm div}, r-2}^1(F^{ct}), \; F \in \Delta_2(T), && \qquad \dofcnt{12r^2-24r+12}, \\
       \label{U2:dofd}
      & \int_T \Div u \cdot \kappa, && \qquad \kappa \in \mathring{U}_{r-3}^3(\WFT), && \qquad \dofcnt{6r^3-18r^2+12r-6}, \\
      \label{U2:dofe}
      & \int_T u \colon \kappa, && \qquad \kappa \in {\rm inc~} \mathring{U}_{r}^1(\WFT), && \qquad \dofcnt{6r^3-27r^2+21r+18}. 
       \end{alignat}
   \end{subequations}
\end{lemma}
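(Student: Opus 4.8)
The plan is to establish unisolvence in the usual two-part fashion. As recorded in the opening line of the proof, $\dim {U}_{r-2}^2(\WFT)=12r^3-27r^2+15r$ equals the total number of functionals in \eqref{dof:U2}, so it suffices to show that if $u\in {U}_{r-2}^2(\WFT)$ annihilates every functional in \eqref{dof:U2}, then $u\equiv 0$. The argument splits into a \emph{face phase}, using the trace DOFs (\ref{U2:dofa})--(\ref{U2:dofc}) to prove that $u$ lies in the boundary-condition space $\mathring{U}_{r-2}^2(\WFT)$, and a \emph{volume phase}, using the two remaining DOFs (\ref{U2:dofd})--(\ref{U2:dofe}) together with the exactness of \eqref{eqn:elseqb} to conclude $u=0$.

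For the face phase I would fix $F\in\Delta_2(T)$ and determine the trace of $u$ in the order normal--normal, normal--tangential, tangential--tangential, exactly mirroring Step~1 of Lemma~\ref{lem:dofu1} but applied to $u$ itself rather than to $\Inc u$. Since $u_{nn}|_F$ is merely a piecewise polynomial trace and so lies in $V_{r-2}^2(\Fct)$, choosing $\kappa=u_{nn}$ in (\ref{U2:dofb}) gives $u_{nn}=0$ on every face. With $\bn_\F' u\bn_\F=0$ on $\partial T$, Lemma~\ref{lem:symcts1} applied with $\ell=\bn_\F$ yields $u_{\nF}\in V_{{\rm div},r-2}^1(\Fct)$, so testing against itself in (\ref{U2:dofc}) forces $u_{\nF}=0$. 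Now that $\bn_\F' u\bt_i=0$ on $\partial T$ for the two tangent directions, Lemma~\ref{lem:symcts1} with $\ell=\bt_1,\bt_2$ shows $u_{\FF}\in V_{{\rm div},r-2}^1(\Fct)\otimes\bV_2$, and symmetry places $u_{\FF}\in Q_{r-2}^1(\Fct)$. The DOFs (\ref{U2:dofa}) against $Q_{r-2}^\perp(\Fct)$ then annihilate the component of $u_{\FF}$ in $Q_{r-2}^\perp(\Fct)$, leaving $u_{\FF}\in \Tilde{Q}_{r-2}^1(\Fct)\subset \Lag_{r-2}^1(\Fct)\otimes\bV_2$; that is, $u_{\FF}$ is continuous across $\Delta_1^I(\Fct)$. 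Together, $u\bn_\F=0$ and the continuity of $u_{\FF}$ on each face place $u\in\mathring{\cV}_{r-2}^2(\WFT)\otimes\bV$, and with $\skw u=0$ we obtain $u\in\mathring{U}_{r-2}^2(\WFT)$.

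For the volume phase I would invoke the exact complex \eqref{eqn:elseqb}. Because \eqref{eqn:elseqb} is a complex, $\Div u\in\mathring{U}_{r-3}^3(\WFT)$, so taking $\kappa=\Div u$ in (\ref{U2:dofd}) gives $\int_T|\Div u|^2=0$, whence $\Div u=0$. Exactness of \eqref{eqn:elseqb} then produces $w\in\mathring{U}_r^1(\WFT)$ with $\Inc w=u$, so $u\in\Inc\,\mathring{U}_r^1(\WFT)$ is itself an admissible test function in (\ref{U2:dofe}); choosing $\kappa=u$ yields $\int_T u\colon u=0$ and hence $u=0$. As a consistency check, the counts of (\ref{U2:dofd}) and (\ref{U2:dofe}) sum to $12r^3-45r^2+33r+12=\dim\mathring{U}_{r-2}^2(\WFT)$, so these two volume functionals are exactly unisolvent on the boundary-condition space, which is precisely why this $L^2$-orthogonality argument closes.

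The main obstacle is the face phase, and specifically the strictly sequential use of Lemma~\ref{lem:symcts1}: each application needs the vanishing of the previously treated component as its hypothesis (the normal--tangential step requires $u_{nn}=0$, and the tangential--tangential step requires $u_{\nF}=0$), so the three blocks must be handled in that order. The most delicate point is the role of (\ref{U2:dofa}): unlike the other face DOFs it does not annihilate $u_{\FF}$ but only removes its $Q_{r-2}^\perp(\Fct)$-component, upgrading $u_{\FF}$ from $H({\rm div}_\F)$-conforming to continuous. This is exactly what lands $u$ in $\mathring{\cV}_{r-2}^2(\WFT)$ rather than over-constraining the trace, and it explains why the complement space $Q_{r-2}^\perp(\Fct)$ appears in the definition of these degrees of freedom.
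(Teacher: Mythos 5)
Your proposal is correct and follows essentially the same route as the paper's proof: determine $u_{nn}$, then $u_{\nF}$ via Lemma~\ref{lem:symcts1}, then use \eqref{U2:dofa} with the definition of $Q^{\perp}_{r-2}$ to place $u$ in $\mathring{U}_{r-2}^2(\WFT)$, and finally use the complex property and exactness of \eqref{eqn:elseqb} with \eqref{U2:dofd}--\eqref{U2:dofe} to conclude $u=0$. The only difference is that you spell out details the paper compresses (e.g., the second application of Lemma~\ref{lem:symcts1} with $\ell=\bt_1,\bt_2$ and the upgrade of $u_{\FF}$ to $\Tilde{Q}^1_{r-2}(\Fct)$), which is a faithful expansion of the same argument rather than a different one.
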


\begin{proof}
    The dimension of ${U}_{r-2}^2(\WFT)$ is $12r^3-27r^2+15r$, which is equal to the sum of the given \dofs. 

    Let $u \in {U}_{r-2}^2(\WFT)$ such that $u$ vanishes on the \dofs\  \eqref{dof:U2}. By \dofs\  \eqref{U2:dofb}, we have $u_{nn}=0$ on each $F \in \Delta_2(T)$. By Lemma \ref{lem:symcts1} and \dofs\  \eqref{U2:dofc}, we have $u_{\nF}=0$ on each $F \in \Delta_2(T)$. Then, $u \in \mathring{V}_{r-2}^2(\WFT) \otimes \bV$. With the definition of $Q^{\perp}_{r-2}$ in Section \ref{sec:2dela} and \eqref{U2:dofa}, we have $u \in  \mathring{\cV}_{r}^2(\WFT) \otimes \bV$ and thus $u \in \mathring{U}_{r-2}^2(\WFT)$. In addition, since $\Div u \in \Div(\mathring{U}_{r-2}^2(\WFT)) \subset \mathring{U}_{r-3}^3(\WFT)$, we have $\Div u=0$ by \dofs\  \eqref{U2:dofd}. Using the exactness of \eqref{eqn:elseqb}, there exist $\kappa \in \mathring{U}_{r}^1(\WFT)$ such that $\Inc \kappa = u$. With \dofs\  \eqref{U2:dofe}, we have $u=0$, which is the desired result.
\end{proof}
\rev{
A pictorial depiction of the lowest-order space $U^2_1(\WFT)$ is given in Figure~\ref{fig:dofs}. 
We only show the \dofs\  associated to one face of the macro tetrahedron in the figure. These are the only \dofs\ that couple adjacent elements.}
\begin{figure}
          \centering
          \includegraphics[width=0.6\textwidth]{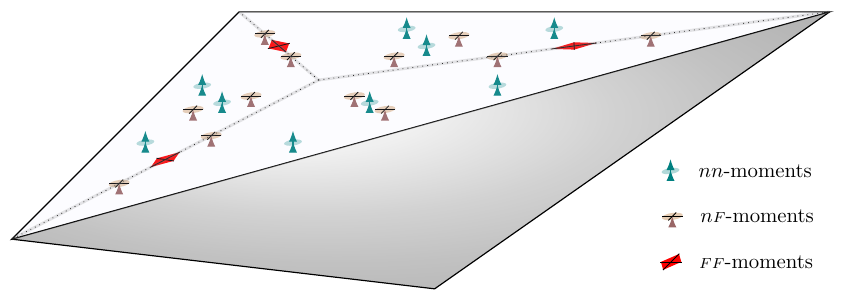}
         \caption{\rev{An illustration of coupling \dofs\  of $U^2_1(\WFT)$. Here, $\FF$-moments, $nn$-moments and $n\F$-moments
         are associated with the \dofs\  \eqref{U2:dofa},  \eqref{U2:dofb}, and \eqref{U2:dofc}, respectively. Note the absence of vertex or edge \dofs.}}
         \label{fig:dofs}
 \end{figure}


\begin{lemma}\label{lem:dofu3}
   A function $u \in {U}_{r-3}^3(\WFT)$, with $r \ge 3$, is fully determined by the following \dofs:
   \begin{subequations} \label{dof:U3}
       \begin{alignat}{3}
       \label{U3:dofa}
       & \int_T u \cdot \kappa, && \qquad \qquad \kappa \in \Rig, && \qquad \qquad
       \dofcnt{6}, \\
      \label{U3:dofb}
      & \int_T u \cdot \kappa, && \qquad \qquad \kappa \in \mathring{U}_{r-3}^3(\WFT), && \qquad \qquad \dofcnt{6r^3-18r^2+12r-6}.
       \end{alignat}
   \end{subequations}
\end{lemma}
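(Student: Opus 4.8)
The plan is to prove unisolvence by the standard two-part argument: first confirm that the number of listed functionals equals $\dim {U}_{r-3}^3(\WFT)$, and then show that any $u$ annihilating every functional in \eqref{dof:U3} must vanish. Since ${U}_{r-3}^3(\WFT) = V_{r-3}^3(\WFT)\otimes\bV = \cP_{r-3}(\WFT)\otimes\bV$ carries no interelement continuity, its dimension is $6r^3-18r^2+12r$ from Table~\ref{tab:VLDim}. The six functionals in \eqref{U3:dofa} together with the $\dim \mathring{U}_{r-3}^3(\WFT) = 6r^3-18r^2+12r-6$ functionals in \eqref{U3:dofb} sum exactly to this value, so the count matches and it remains only to establish injectivity.

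For injectivity I would argue as follows. Suppose $u \in {U}_{r-3}^3(\WFT)$ makes all the DOFs vanish. Vanishing of \eqref{U3:dofa} says $\int_T u\cdot\kappa = 0$ for every $\kappa\in\Rig$, that is, $u\perp\Rig$ in $L^2(T)$; by the very definition of $\mathring{U}_{r-3}^3(\WFT) = \{u \in V_{r-3}^3(\WFT)\otimes\bV:\ u\perp\Rig\}$, this places $u \in \mathring{U}_{r-3}^3(\WFT)$. I may then take the admissible test function $\kappa = u$ in \eqref{U3:dofb}, which yields $\int_T |u|^2 = 0$ and hence $u = 0$. Because the functional count already equals the dimension, trivial common kernel is precisely what is needed to conclude that the DOFs are unisolvent.

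The hard part here is essentially nonexistent: unlike the delicate continuity analysis required for ${U}^1_r(\WFT)$ in Lemma~\ref{lem:dofu1}, the space ${U}_{r-3}^3(\WFT)$ is a plain discontinuous piecewise-polynomial space and both DOF families are $L^2$ pairings, so the entire argument reduces to the orthogonal-complement structure built into the definition of $\mathring{U}_{r-3}^3(\WFT)$. The only point deserving care is the dimension bookkeeping at the lowest order $r=3$, where $\Rig\not\subset {U}_0^3(\WFT)$; there I would invoke Lemma~\ref{lem:projRig} (and \eqref{Ur} for $r\ge 4$) to justify $\dim \mathring{U}_{r-3}^3(\WFT) = \dim {U}_{r-3}^3(\WFT)-6$, which is exactly what makes the six $\Rig$-functionals complementary to those in \eqref{U3:dofb}.
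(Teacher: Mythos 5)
Your proof is correct and is essentially the argument the paper intends: the paper states this lemma without an explicit proof, since it follows at once from the matching dimension count and the decompositions \eqref{Ur} (for $r\ge 4$) and \eqref{U03} (for $r=3$), and your argument---vanishing of \eqref{U3:dofa} places $u\in\mathring{U}_{r-3}^3(\WFT)$ by definition, after which $\kappa=u$ in \eqref{U3:dofb} forces $u=0$---is exactly that reasoning made explicit. Your attention to the $r=3$ case via Lemma~\ref{lem:projRig} also matches the paper's own handling, as reflected in its remark that \eqref{U3:dofa} is equivalent to testing against $P_\U\Rig$.
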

 {
\begin{remark}
Note that \eqref{U3:dofa} is equivalent to 
\[
\int_T u \cdot \kappa, \quad \kappa \in P_\U \Rig,
\]
since by the definition of $L^2$-projection, for any $\kappa \in \Rig$, 
\[
\int_T u \cdot \kappa = \int_T u \cdot P_\U \kappa, \quad \; u \in {U}_{r-3}^3(\WFT).
\]
\end{remark}
}

\section{Commuting projections} 
In this section, we show that the degrees of freedom constructed in the previous sections induce projections which satisfy commuting properties.
\begin{thm}\label{thm:Commuting}
    Let $r \ge 3$. Let $\Pi_{r+1}^0: C^{\infty}(\Bar{T}) \otimes \bV \rightarrow U_{r+1}^0(\WFT)$ 
    be the projection defined in Lemma \ref{lem:dofu0}, 
    let $\Pi_{r}^1: C^{\infty}(\Bar{T}) \otimes \bV \rightarrow U_{r}^1(\WFT)$ be the projection defined in Lemma \ref{lem:dofu1}, 
    let $\Pi_{r-2}^2: C^{\infty}(\Bar{T}) \otimes \bV \rightarrow U_{r-2}^2(\WFT)$ be the projection defined in Lemma \ref{lem:dofu2}, 
    and let $\Pi_{r-3}^3: C^{\infty}(\Bar{T}) \otimes \bV \rightarrow U_{r-3}^3(\WFT)$ be the projection defined in Lemma \ref{lem:dofu2}.
    Then the following commuting properties are satisfied.
    \begin{subequations}
        \begin{alignat}{2}
        \label{com:epsilon}
            \varepsilon\big( \Pi_{r+1}^0 u\big) & = \Pi_{r}^1 \varepsilon(u) , && \quad \; u \in C^{\infty}(\Bar{T})\otimes \bV \\
        \label{com:inc}
            \Inc \Pi_{r}^1 v & = \Pi_{r-2}^2 \Inc v, && \quad \; v \in C^{\infty}(\Bar{T})\otimes \bS \\
        \label{com:div}
            \Div \Pi_{r-2}^2 w & = \Pi_{r-3}^3 \Div w, && \quad \; w \in C^{\infty}(\Bar{T})\otimes \bS 
        \end{alignat}
    \end{subequations}
\end{thm}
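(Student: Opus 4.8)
The plan is to turn each of the three identities into a finite check against degrees of freedom (DOFs), and then to exploit the fact that the DOFs in Lemmas~\ref{lem:dofu0}--\ref{lem:dofu3} were designed to match across consecutive spaces. By Theorem~\ref{thm:elseq} the operators $\varepsilon$, $\Inc$, $\Div$ carry each $U$-space into the next, so for smooth arguments the left-hand sides $\varepsilon(\Pi_{r+1}^0 u)$, $\Inc(\Pi_r^1 v)$, $\Div(\Pi_{r-2}^2 w)$ already lie in $U_r^1(\WFT)$, $U_{r-2}^2(\WFT)$, $U_{r-3}^3(\WFT)$ respectively. Since $\Pi^{k+1}(d\,\cdot)$ is the unique element of the target space agreeing with $d\,\cdot$ on every target DOF, each identity follows once the two sides are shown to agree on all target DOFs. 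Setting $w:=\Pi^k u - u$, which annihilates every source DOF by definition of $\Pi^k$, the task reduces to the local claim: \emph{if a smooth field has vanishing source DOFs, then $d$ of it has vanishing target DOFs}. I verify this claim group by group.

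For \eqref{com:epsilon}, the incompatibility-type face DOFs \eqref{U1:dofd}--\eqref{U1:doff} and the interior DOF \eqref{U1:dofk} vanish on $\varepsilon(w)$ automatically, because $\Inc\,\varepsilon=0$ by Theorem~\ref{thm:elseq}. The remaining target DOFs reduce to source DOFs through the trace identities of Lemma~\ref{lem:iden}: using $(\Grad w)_{\FF}=\Grad_\F w_\F$ from \eqref{eq:10}, the tangential DOF \eqref{U1:dofg} coincides with \eqref{U0:dofe}; using \eqref{more4}, the DOFs \eqref{U1:dofi} and \eqref{U1:dofj} reduce to \eqref{U0:doff} and \eqref{U0:dofg}; using \eqref{eqn:curlFFiden} of Lemma~\ref{lem:curlFF}, the DOF \eqref{U1:dofh} reduces to the normal-derivative DOF \eqref{U0:dofh}; the vertex and edge DOFs \eqref{U1:dofa}--\eqref{U1:dofc} follow from the vanishing of $w$ and $\Grad w$ on $\Delta_0(T)\cup\Delta_1(T)$ together with \eqref{more1}; and the interior DOF \eqref{U1:dofl}, evaluated on $\varepsilon(w)$, reproduces the source DOF \eqref{U0:dofi}. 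Hence $\varepsilon(w)$ annihilates every DOF of $U_r^1(\WFT)$.

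For \eqref{com:inc} the same bookkeeping applies: \eqref{U2:dofa} matches \eqref{U1:dofd}, while \eqref{id1} and \eqref{id2} convert $(\Inc w)_{nn}$ and $(\Inc w)_{\Fn}$ into the surface expressions $\incF w_{\FF}$ and $\curl_\F[(\bcurl w)']_{\FF}$; the interior DOF \eqref{U2:dofd} vanishes because $\Div\,\Inc=0$, and \eqref{U2:dofe} equals \eqref{U1:dofk}. The one genuine subtlety is that the target face DOFs \eqref{U2:dofb}, \eqref{U2:dofc} test against the \emph{larger} spaces $V_{r-2}^2(\Fct)$ and $V_{{\rm div},r-2}^1(\Fct)$, whereas the matching source DOFs \eqref{U1:dofe}, \eqref{U1:doff} only test against $\mathring{Q}^2_{r-2}(\Fct)$ and $V_{{\rm div},r-2}^1(\Fct)/\Rig(F)$; the excess test functions span exactly $\cP_1(F)$ and $\Rig(F)$, and these are annihilated by Lemma~\ref{lem:incFiden} and Lemma~\ref{lem:inciden}, whose boundary hypotheses are supplied by the trace behavior of $w$ established as in the proof of Lemma~\ref{lem:dofu1}. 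For \eqref{com:div}, the interior DOF \eqref{U3:dofb} is identical to \eqref{U2:dofd}, while the rigid DOF \eqref{U3:dofa} is handled by integration by parts: for $\kappa\in\Rig$ one has $\Grad\kappa=\mskw(b)\in\bK_3$, so symmetry of $w$ gives $w\colon\Grad\kappa=0$ and $\int_T\Div w\cdot\kappa=\int_{\partial T}(w\bn)\cdot\kappa$, a boundary quantity controlled by the vanishing face DOFs \eqref{U2:dofa}--\eqref{U2:dofc}.

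I expect the main obstacle to be the face DOFs for the first two identities, precisely because their target test spaces are strictly larger than the corresponding source test spaces. Making the reduction rigorous there hinges on the applicability of the integration-by-parts identities in Lemmas~\ref{lem:incFiden} and~\ref{lem:inciden}, which in turn requires the correct vanishing of $w$ and of $[(\bcurl w)']_{\FF}\bt$ along each face and its boundary edges; re-deriving this boundary behavior reuses the delicate Clough-Tocher trace arguments from the unisolvence proof of Lemma~\ref{lem:dofu1}. The interior DOFs, by contrast, are essentially free, collapsing either to the complex relations $\Inc\,\varepsilon=0$ and $\Div\,\Inc=0$ or to a verbatim repetition of a source DOF.
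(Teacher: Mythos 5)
Your overall strategy---reduce each identity to showing that the difference $w=\Pi^k u-u$, which annihilates every source DOF, produces vanishing target DOFs after applying the differential operator---is exactly the paper's strategy, and most of your group-by-group reductions (the $\Inc\,\varepsilon=0$ and $\Div\,\Inc=0$ collapses, the matchings \eqref{U1:dofg}$\leftrightarrow$\eqref{U0:dofe} and \eqref{U1:dofh}$\leftrightarrow$\eqref{U0:dofh} via Lemma~\ref{lem:curlFF}, \eqref{U2:dofe}$\leftrightarrow$\eqref{U1:dofk}, and the integration by parts for \eqref{U3:dofa}) coincide with the paper's. However, there is a genuine gap at precisely the two places you flag as delicate, and it stems from a single conflation: you treat $w=\Pi^k u-u$ as if vanishing DOFs implied vanishing traces. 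For the edge DOFs \eqref{U1:dofb}--\eqref{U1:dofc} you assert that $w$ and $\Grad w$ vanish on $\Delta_0(T)\cup\Delta_1(T)$. This is true at the vertices (by \eqref{U0:dofa}--\eqref{U0:dofb}) but false on the edges: $w$ is not piecewise polynomial (its $u$-part is merely smooth), so the vanishing of the moments \eqref{U0:dofc}--\eqref{U0:dofd} does not upgrade to $w|_e=0$, $\Grad w|_e=0$; that upgrade is available only in the unisolvence proofs, where the field lies in the finite element space. The paper closes this step differently: writing $\int_e\varepsilon(w):\kappa=\int_e\Grad\, w:\kappa$, the terms involving $\partial w/\partial \bn_e^{\pm}$ are killed by \eqref{U0:dofd} because the entries of $\kappa$ have degree at most $r-2$, while the terms involving $\partial_{\bt_e}w$ are integrated by parts \emph{along} $e$ so that they land on \eqref{U0:dofa} and \eqref{U0:dofc}; the DOF \eqref{U1:dofc} is handled the same way after \eqref{more1}, with one more integration by parts. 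Without this mechanism your verification of \eqref{U1:dofb}--\eqref{U1:dofc} does not go through.

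The same conflation undermines your plan for the excess test functions in \eqref{com:inc}. For $\kappa_1\in\cP_1(F)$ and $\kappa\in\Rig(F)$ you invoke Lemma~\ref{lem:incFiden} (its consequence part) and Lemma~\ref{lem:inciden}, with their boundary hypotheses ``supplied by the trace behavior of $w$ established as in the proof of Lemma~\ref{lem:dofu1}.'' Those hypotheses are \emph{pointwise} conditions ($w|_{\partial F}=0$, $[(\bcurl w)']_{\FF}\bt|_{\partial F}=0$), and they cannot be re-derived for $w=\Pi_r^1 v-v$: the trace arguments in Lemma~\ref{lem:dofu1} (e.g.\ concluding $u|_e=0$ from \eqref{U1:dofa}--\eqref{U1:dofc}, or $u_{\FF}\in \mathring{Q}_{{\rm inc},r}^{1,s}(\Fct)$) rest on the field being piecewise polynomial. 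The paper instead keeps the boundary terms: it applies the unconditional identity \eqref{eqn:incF}, obtaining $\int_F\rho_{nn}\kappa_1=\int_{\partial F}(\curl_\F w_{\FF})\bt\,\kappa_1+\int_{\partial F}w_{\FF}\bt\cdot(\rot_\F\kappa_1)'$, and then annihilates these two boundary integrals using the \emph{edge} DOFs \eqref{U1:dofc} and \eqref{U1:dofb} (after rewriting the integrands via \eqref{id4} and symmetrization); the $\Rig(F)$ case is treated analogously. So the fix for both gaps is the same, and it is the actual content of the paper's proof: never claim traces of $w$ vanish; instead retain the edge/boundary terms in the integration-by-parts identities and absorb them into the edge DOFs, exploiting the polynomial-degree restrictions of the test functions.
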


\begin{proof}
(i) Proof of \eqref{com:epsilon}: Given $u \in C^{\infty}(\Bar{T})\otimes \bV$, let $\rho = \varepsilon\big(\Pi_{r+1}^0 u\big) - \Pi_{r}^1 \varepsilon(u) \in U^1_r(\WFT)$. To prove that \eqref{com:epsilon} holds, it suffices to show that $\rho$ vanishes on the \dofs\  \eqref{dof:U1} in Lemma \ref{lem:dofu1}. Since $\Inc \circ \varepsilon = 0$, we have \dofs\  of \eqref{U1:dofd}, \eqref{U1:dofe}, \eqref{U1:doff} and \eqref{U1:dofk} applied to $\rho$ vanish. Next, with \eqref{U0:dofb}, \eqref{U0:dofe}, \eqref{U0:doff}, \eqref{U0:dofg}, 
\eqref{U0:dofi} applied to $u$, and with \eqref{U1:dofa}, \eqref{U1:dofg}, \eqref{U1:dofi}, \eqref{U1:dofj}, \eqref{U1:dofl} applied to $\varepsilon(u)$, each term respectively imply that \eqref{U1:dofa}, \eqref{U1:dofg}, \eqref{U1:dofi}, \eqref{U1:dofj}, \eqref{U1:dofl} applied to $\rho$ vanish. By the identity \eqref{eqn:curlFFiden} in Lemma \ref{lem:curlFF}, for any $\kappa \in \Grad_\F [(\cR_{r}^0(F^{ct})]^2, \text{ for all } F \in \Delta_2(T)$, we have:
\[
\int_\F ([(\bcurl \rho)']_{\FF} - \Grad_\F (\rho_{\Fn}^\perp)) \colon \kappa = \int_\F \Grad_\F(\partial_n (\Pi_{r+1}^0 u)_\F - \partial_n u_\F) \colon \kappa = 0,
\]
where the last equality holds with \eqref{U0:dofh} applied to $u$. Thus, the \dofs\  \eqref{U1:dofh} applied to $\rho$ vanish.
It only remains to prove that the \dofs\  of \eqref{U1:dofb}, \eqref{U1:dofc} applied to $\rho$ vanish. 
To show this, we need to employ the edge-based orthonormal basis
  $\{ \bn_e^+, \bn_e^-, \bt_e\}$ and write
  $\kappa \in \sym [\cP_{r-2}(e)]^{3 \times 3}$ as
  $ \kappa = \kappa_{11} \bn_e^{+} (\bn_e^{+})' + \kappa_{12} \big(\bn_e^+
  (\bn_e^-)' + \bn_e^- (\bn_e^+)'\big) + \kappa_{13} \big(\bn_e^+ \bt_e' + \bt_e
  (\bn_e^{+})'\big) +  \kappa_{22} \bn_e^-(\bn_e^-)' +
  \kappa_{23}\big(\bn_e^+(\bn_e^-)' + \bn_e^- (\bn_e^+)'\big) + \kappa_{33}
  \bt_e \bt_e'$ where $\kappa_{ij} \in \cP_{r-2}(e).  $ Then,
  \begin{alignat*}{2}
    \int_e \rho: \kappa
    & = \int_e [\varepsilon(\Pi_{r+1}^0 u)
    -\Pi_{r}^1\varepsilon(u)] : \kappa 
    = \int_e \varepsilon(\Pi_{r+1}^0 u -u) : \kappa
    & \text{by~\eqref{U1:dofb}}
    \\
    & = \int_e \Grad \,(\Pi_{r+1}^0  u - u) :  \kappa  \\
    & =  \int_e \Grad \,(\Pi_{r+1}^0  u - u)  \bt_e  \cdot ( \kappa_{13} \bn_e^+  +\kappa_{33} \bt_e) & \text{by~\eqref{U0:dofd}}\\
    & =  \int_e (\Pi_{r+1}^0  u - u) \cdot \frac{\partial}{\partial \bt_e} ( \kappa_{13} \bn_e^+  +\kappa_{33} \bt_e) & \text{integration by parts}\\
    &= 0 & \text{by~\eqref{U0:dofa} and \eqref{U0:dofc}}. 
  \end{alignat*}
  Thus
  the \dofs\  of \eqref{U1:dofb} applied to $\rho$ vanish. Next, letting
  $\kappa \in [\cP_{r-1}(e)]^3$, we note that
   \begin{alignat*}{2}
     \int_e (\curl \rho)'\bt_e \cdot \kappa=
     & \int_e \big[\curl \varepsilon(\Pi_{r+1}^0 u-u)\big]'\bt_e \cdot \kappa \quad && \text{by~\eqref{U1:dofc}} \\
    =& \frac{1}{2} \int_e  \big[\Grad \, \curl (\Pi_{r+1}^0 u-u)\big] \bt_e \cdot \kappa  \quad && \text{by~\eqref{more1}} \\
    =&   -\frac{1}{2} \int_e  \curl (\Pi_{r+1}^0 u-u)
    \cdot \partial_t\kappa \quad && \text{by~\eqref{U0:dofa} and \eqref{U0:dofb}}
    \end{alignat*}
    where in the last step, we have integrated by parts, and put
    $\partial_t \kappa = (\Grad \, \kappa) \bt_e$. The curl in the
    integrand above can be decomposed into terms involving
    $\partial_t (\Pi_{r+1}^0 u-u)$ and those involving
    $\partial_{\bn_e^\pm}(\Pi_{r+1}^0 u-u)$. The former terms can
    be integrated by parts yet again, which after
    using~\eqref{U0:dofa}, \eqref{U0:dofb} and \eqref{U0:dofc}, vanish. The
    latter terms also vanish by~\eqref{U0:dofd}, noting that
 $\partial_t \kappa$ is of degree at most $r-2$.

(ii) Proof of \eqref{com:inc}: Given $v \in C^{\infty}(\Bar{T})\otimes \bS$, let 
$\rho = \Inc \Pi_{r}^1 v - \Pi_{r-2}^2 \Inc v \in {U}_{r-2}^2(\WFT)$. To prove that 
\eqref{com:inc} holds, we need to show that $\rho$ vanishes on the \dofs\  \eqref{dof:U2} in Lemma \ref{lem:dofu2}. 
By using \eqref{U2:dofb}  on $\Inc v$, we have 
\begin{align}\label{eqn:ABC321A}
\int_\F \rho_{nn} \kappa = \int_\F [\Inc (\Pi_{r}^1 v - v)]_{nn} \kappa,\qquad \text{ for all } \kappa\in V_{r-2}^2(F^{ct}).
\end{align}
From \eqref{U1:dofe}, we have that the right-hand side of \eqref{eqn:ABC321A} 
vanishes for $\kappa \in V_{r-2}^2(F^{ct})/\cP_{1}(F)$.
With \eqref{eqn:incF} of Lemma~\ref{lem:incFiden}, we have for any $\kappa_1 \in \cP_{1}(F)$,
\begin{equation} \label{eqn:comm~incnn~2}
    \int_\F \rho_{nn} \kappa_1 = \int_{\partial F} (\curl_\F (\Pi_{r}^1 v - v)_{\FF})\bt \kappa_1  + \int_{\partial F} (\Pi_{r}^1 v - v)_{\FF} \bt \cdot (\rot_\F \kappa_1)'.
\end{equation}
By~\eqref{id4},
  $\curl_\F ( \Pi_{r}^1 v - v)_{\FF} t\, \kappa_1 = [\curl ( \Pi_{r}^1 v -
  v)']_{\Fn} t \,\kappa_1 = \curl ( \Pi_{r}^1 v - v)': \kappa_1 n t',$ so the
  first term on the right-hand side of~\eqref{eqn:comm~incnn~2} vanishes by
  \eqref{U1:dofc}.  The last term in~\eqref{eqn:comm~incnn~2} also
  vanishes because
  \begin{align*}
      \int_{\partial F} ( \Pi_{r}^1 v - v)_\FF t \cdot (\rot_\F \kappa_1)' & = \int_{\partial F} Q( \Pi_{r}^1 v - v) Q t
  \cdot (\rot_\F \kappa_1)' = \int_{\partial F} ( \Pi_{r}^1 v - v) Q t \cdot Q(\rot_\F \kappa_1)'  \\
 &=\int_{\partial F} (\Pi_{r}^1 v - v) : \sym(Q(\rot_\F \kappa_1)' t)=0,
  \end{align*}
  where we used  \eqref{U1:dofb} in the last equality.
Thus, the right-hand side of \eqref{eqn:comm~incnn~2} vanishes,
and therefore the right-hand side of \eqref{eqn:ABC321A} vanishes, i.e., the \dofs\  \eqref{U2:dofb} vanish for $\rho$. 

Next using \eqref{U2:dofc} we have
\begin{align}
\label{eqn:ABC321B}
\int_\F \rho_{\nF} \cdot \kappa = \int_\F [\Inc (\Pi_{r}^1 v - v)]_{\nF} \cdot \kappa,\qquad \text{ for all } \kappa \in V_{{\rm div},r-2}^1(F^{ct}).
\end{align}
The \dofs\  \eqref{U1:doff} imply the right-hand side of \eqref{eqn:ABC321B} vanishes  for all $\kappa \in V_{{\rm div}, r-2}^1(F^{ct})/\Rig(F)$.
Considering $\kappa\in \Rig(F)$ in \eqref{eqn:ABC321B}, we may conduct a similar argument as above,
but now  using \eqref{eqn:incfn} of Lemma~\ref{lem:inciden}, to conclude the right-hand side of \eqref{eqn:ABC321B} vanishes.
Thus, we conclude that \eqref{U2:dofc} vanishes for $\rho$.
  
  In addition, note that \eqref{U1:dofd} and \eqref{U2:dofa} imply that the \dofs\  \eqref{U2:dofa} vanish for $\rho$. Finally, the remaining \dofs\  of \eqref{U2:dofd} and \eqref{U2:dofe} applied to $\rho$ also vanish, thus leading to \eqref{com:inc}.

(iii) Proof of \eqref{com:div}: Given $w \in C^{\infty}(\Bar{T})\otimes \bS$, let $\rho = \Div \Pi_{r-2}^2 w - \Pi_{r-3}^3 \Div w \in {U}_{r-3}^3(\WFT)$. To prove that \eqref{com:div} holds, we will show that $\rho$ vanishes on the \dofs\  \eqref{dof:U3} in Lemma \ref{lem:dofu3}. Using \eqref{U2:dofd} and \eqref{U3:dofb}, we have for any $\kappa \in \mathring{U}_{r-3}^3(\WFT)$, 
\begin{align*}
    \int_{T} \rho \cdot \kappa = \int_T (\Div \Pi_{r-2}^2 w - \Div w) \cdot \kappa  = \int_T (\Div w -\Div w) \cdot \kappa = 0.
\end{align*}
For $\kappa \in \Rig$, we find 
\begin{align*}
    \int_{T} \rho \cdot \kappa & = \int_T (\Div \Pi_{r-2}^2 w - \Div w) \cdot \kappa && \text{by~} \eqref{U3:dofa}\\
    &= \int_{\partial T} (\Pi_{r-2}^2 w-w)\bn_\F \cdot \kappa\\ 
    & = \sum\limits_{F \in \Delta_2(T)} \int_{\partial F} (\Pi_{r-2}^2 w - w)_{nn} (\kappa\cdot \bn_F) -\int_{\partial F} (\Pi_{r-2}^2 w - w)_{\nF} \cdot \kappa \\
    & = 0 && \text{by~} \eqref{U2:dofb} \text{~and~} \eqref{U2:dofc},
\end{align*}
Thus, $\rho =0$, and so 
the commuting property \eqref{com:div} is satisfied.
\end{proof}

\section{Global complexes} \label{sec:global}
In this section, we construct the discrete elasticity complex globally by putting the local spaces together. Recall that $\Omega\subset \bbR^3$ is a contractible 
polyhedral domain, and $\THWFT$ is the Worsey-Farin refinement of the mesh $\calT_h$ on $\Omega$. 

We first present below the global exact de~Rham complexes on Worsey-Farin splits which are needed to construct elasticity complexes; for more details, see \cite[Section 6]{guzman2022exact}:
\begin{subequations}
 \begin{equation} \label{eqn:global~seq1}
     0  \rightarrow {\cS}_{r}^0(\THWFT) \xrightarrow{\Grad} \cL_{r-1}^1(\THWFT) \xrightarrow{\bcurl} {\mathscr{V}}_{r-2}^2(\THWFT) \xrightarrow{\Div} {V}_{r-3}^3(\THWFT) \rightarrow 0,
 \end{equation}
 \begin{equation} \label{eqn:global~seq2}
    0 \rightarrow  {\cS}_{r}^0(\THWFT) \xrightarrow{\Grad} {\cS}_{r-1}^1(\THWFT) \xrightarrow{\bcurl} \cL_{r-2}^2(\THWFT) \xrightarrow{\Div} {\mathscr{V}}_{r-3}^3(\THWFT) \rightarrow 0,
 \end{equation}
 \end{subequations}
 where the spaces involved are defined as follows:
 \begin{align*}
	\cS_r^0(\THWFT) &= \{ q \in C^1(\Omega) : q|_T \in S_r^0(\WFT), \, \text{ for all } T \in \mathcal{T}_h\}, \\
	\cS_{r-1}^1(\THWFT) &= \{ v \in [C(\Omega)]^3 : \curl v \in [C(\Omega)]^3, \, v|_T \in S_{r-1}^1(\WFT) \, \text{ for all } T \in \mathcal{T}_h\}, \\
	\mathcal{L}_{r-1}^1(\THWFT) &= \{ v \in [C(\Omega)]^3 : v|_T \in \rev{\Lag_{r-1}^1(\WFT)}, \, \text{ for all } T \in \mathcal{T}_h\}, \\
	\cL_{r-2}^2(\THWFT) &= \{ w \in [C(\Omega)]^3 : w|_T \in \rev{\Lag_{r-2}^2(\WFT)}, \, \text{ for all } T \in \mathcal{T}_h\}, \\
	\mathscr{V}_{r-2}^2(\THWFT) &= \{w \in H({\rm div} ; \Omega) : w|_T \in V_{r-2}^2(\WFT), \, \text{ for all } T \in \mathcal{T}_h, \\
	& \qquad {\theta_e(w \cdot t) =0}, \, \text{ for all } e \in \mathcal{E}(\THWFT)\}, \\
	\mathscr{V}_{r-3}^3(\THWFT) &= \{p \in L^2(\Omega) : p|_T \in V_{r-3}^3(\WFT), \, \text{ for all } T \in \mathcal{T}_h, \, \theta_e(p) = 0  \text{ and } e \in \mathcal{E}(\THWFT)\}, \\
	V_{r-3}^3(\THWFT) &= \cP_{r-3}(\THWFT),
\end{align*}
and we recall $\theta_e(\cdot)$ is defined in \eqref{eqn:thetaEDef}. \rev{Above, these spaces are defined through their continuity requirements. They can also be defined using their local \dofs\ given in \cite[Section 5.1 and Section 5.3]{guzman2022exact}. The two definitions are proven to be equivalent in \cite[Lemma 6.6 and Lemma 6.7]{guzman2022exact}. We will follow a similar approach for the elasticity complex and define the global spaces in the elasticity complex in terms of  their continuity requirements and show that the spaces are the same as those given through local \dofs.}
\rev{With the global spaces defined, the} global analogue of Theorem \ref{thm:preseq} is now given.
\begin{thm} \label{thm:global~preseq}
    The following sequence is exact for any $r \ge 3$:
    \begin{equation*}
            \begin{bmatrix}
            \cS_{r+1}^0(\THWFT) \otimes \bV \\
            \cS_{r}^0(\THWFT \otimes \bV)
            \end{bmatrix}
            \xrightarrow{[\Grad, -\mskw]} \cS_{r}^1(\THWFT) \otimes \bV \xrightarrow{\inc} \mathscr{V}_{r-2}^2(\THWFT) \otimes \bV \xrightarrow{ \left[\begin{smallmatrix}
            2\vskw \\ \Div \end{smallmatrix}\right]}
            \begin{bmatrix}
              \mathscr{V}^3_{r-2}(\THWFT) \!\otimes \!\bV \\ V_{r-3}^3(\THWFT) \otimes \bV
            \end{bmatrix}.
    \end{equation*}
Moreover, the kernel of the first operator is isomorphic to $\Rig$ and the last \rev{operator is} surjective.
\end{thm}
\begin{proof}
   The result follows from the exactness of the 
   complexes \eqref{eqn:global~seq1}-- \eqref{eqn:global~seq2}, 
   Proposition \ref{prop:exactpattern},
   and the exact same arguments in the proof of Theorem \ref{thm:preseq}.
\end{proof}

\rev{Similar to the local spaces defined in Section \ref{subsec:ela~seq},} the global spaces involved in the elasticity complex are \rev{derived} as follows:
\begin{equation}\label{eqn:Uglobal}
\begin{split}
     & U_{r+1}^0(\THWFT) = \cS_{r+1}^0(\THWFT) \otimes \bV \qquad \qquad \qquad \qquad \quad \, U_{r}^1(\THWFT) = \{\sym(u): u \in \cS_{r}^1(\THWFT) \otimes \bV\}, \\
        & U_{r-2}^2(\THWFT) = \{u \in \mathscr{V}_{r-2}^2(\THWFT) \otimes \bV: \skw \, u =0\}, \quad U_{r-3}^3(\THWFT) = V_{r-3}^3(\THWFT) \otimes \bV. 
\end{split}    
\end{equation}
\begin{thm}\label{thm:global~charU1}
    We have the following equivalent characterization of ${U}_{r}^1(\THWFT)$:
    \begin{align*}
        U_{r}^1(\THWFT) = \{ u \in H^1(\Omega;\bS): & u|_T \in U_{r}^1(\WFT), \, \text{ for all } T \in \mathcal{T}_h, \\
        & (\bcurl u)' \in {V}_{r-1}^1(\THWFT) \otimes \bV, {\rm inc}(u) \in \mathscr{V}_{r-2}^2(\THWFT) \otimes \bV \}.
    \end{align*}
\end{thm}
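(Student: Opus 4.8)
The plan is to mirror the proof of the local characterization in Theorem~\ref{thm:charU1}, replacing each local exact sequence by its global analogue and tracking interelement continuity carefully. Write $M_r$ for the set on the right-hand side of the claimed identity, and prove the two inclusions $U_r^1(\THWFT)\subset M_r$ and $M_r\subset U_r^1(\THWFT)$ in turn.

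For the forward inclusion, take $u\in U_r^1(\THWFT)$, so that $u=\sym(z)$ for some $z\in\cS_r^1(\THWFT)\otimes\bV$. Restricting to each $T$ gives $u|_T=\sym(z|_T)\in U_r^1(\WFT)$, and the global continuity of $z$ forces $u\in H^1(\Omega;\bS)$. Identity~\eqref{aux617} gives $(\bcurl u)'=\Xi^{-1}\bcurl z+\Grad\,\vskw(z)$; since $z\in\cS_r^1(\THWFT)\otimes\bV$ has globally continuous $\bcurl z$, the first summand is (fully) continuous while the second is tangentially continuous because $\vskw(z)$ is continuous, so $(\bcurl u)'\in V_{r-1}^1(\THWFT)\otimes\bV$. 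Finally $\Inc u=\Inc z=\bcurl\big(\Xi^{-1}\bcurl z\big)$ with $\Xi^{-1}\bcurl z\in\cL_{r-1}^1(\THWFT)\otimes\bV$, so~\eqref{eqn:global~seq1} places $\Inc u$ in $\mathscr{V}_{r-2}^2(\THWFT)\otimes\bV$. Hence $u\in M_r$.

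For the reverse inclusion, take $u\in M_r$ and set $\sigma=\Inc u$. By~\eqref{eqn:iden5} and~\eqref{eqn:iden4} we have $\Div\sigma=0$ and $\vskw(\sigma)=0$, and by hypothesis $\sigma\in\mathscr{V}_{r-2}^2(\THWFT)\otimes\bV$, so $\sigma$ lies in the kernel of the last operator of~\eqref{eqn:global~preseq}. Theorem~\ref{thm:global~preseq} then yields $w\in\cS_r^1(\THWFT)\otimes\bV$ with $\Inc w=\sigma$. Put $p=\Xi^{-1}\bcurl(w-u)=\Xi^{-1}\bcurl w-(\bcurl u)'$; its first term is continuous and its second belongs to $V_{r-1}^1(\THWFT)\otimes\bV$ by hypothesis, so $p$ is tangentially continuous and $p\in V_{r-1}^1(\THWFT)\otimes\bV$. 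Since $\bcurl p=\Inc(w-u)=0$ and $\Omega$ is contractible, exactness of the classical Nédélec de~Rham complex on the (simplicial) mesh $\THWFT$ gives a globally continuous $v$, with $v|_T\in\Lag_r^0(\WFT)\otimes\bV$, satisfying $\Grad v=p$. Setting $z=u+\mskw(v)$, one checks $\sym(z)=u$ and that $z$ is globally continuous; moreover, by~\eqref{eqn:iden2}, $\bcurl z=\bcurl u-\Xi\Grad v=\bcurl u-\Xi p=\bcurl w$, which is globally continuous. Since $z|_T\in S_r^1(\WFT)\otimes\bV$ as well, we get $z\in\cS_r^1(\THWFT)\otimes\bV$, and therefore $u=\sym(z)\in U_r^1(\THWFT)$.

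The main obstacle is the continuity bookkeeping, which is exactly where the global statement differs from the local one in Theorem~\ref{thm:charU1}. One must recognize that $(\bcurl u)'$ is only tangentially continuous---so that the relevant ambient space is the $H(\bcurl)$-conforming Nédélec space $V_{r-1}^1(\THWFT)\otimes\bV$ and the de~Rham step must invoke the classical (tangentially continuous) Nédélec complex rather than the smoother complex~\eqref{eqn:global~seq1}---whereas $\Inc u$ carries the finer continuity encoded by the jump conditions $\theta_e$ in $\mathscr{V}_{r-2}^2(\THWFT)$. It is precisely the hypothesis $\Inc u\in\mathscr{V}_{r-2}^2(\THWFT)\otimes\bV$ that licenses the appeal to Theorem~\ref{thm:global~preseq} and thereby guarantees that $\bcurl z=\bcurl w$ is globally continuous, so that $z$ genuinely lands in $\cS_r^1(\THWFT)\otimes\bV$. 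This explains why, unlike the local space $U_r^1(\WFT)$, the global characterization needs the extra incompatibility condition, paralleling the role it already played for $\mathring{U}_r^1(\WFT)$.
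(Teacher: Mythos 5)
Your proposal is correct and follows essentially the same route as the paper, whose proof of this theorem is literally the one-line remark that it is proved as in Theorem~\ref{thm:charU1} with Theorem~\ref{thm:global~preseq} replacing Theorem~\ref{thm:preseq}; your write-up simply supplies the interelement-continuity bookkeeping (tangential continuity of $(\bcurl u)'$, the role of the inc hypothesis in invoking the global exact sequence, and the global N\'ed\'elec de~Rham step) that the paper leaves implicit. One trivial slip, which the paper's own local proof shares: with $\Grad\, v = p = \Xi^{-1}\bcurl(w-u)$, identity~\eqref{eqn:iden2} gives $\bcurl\big(u+\mskw(v)\big) = 2\bcurl u - \bcurl w$, so you should set $z = u - \mskw(v)$ (equivalently, define $p = \Xi^{-1}\bcurl(u-w)$) to obtain $\bcurl z = \bcurl w$.
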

\begin{proof}
    This is proved similarly as the proof of Theorem \ref{thm:charU1} using Theorem \ref{thm:global~preseq} in place of Theorem \ref{thm:preseq}. 
\end{proof}
\rev{Now, we show that the global spaces
defined in \eqref{eqn:Uglobal} are equivalent to those induced
by the local \dofs\  presented in Section~\ref{sec:LocalDOFs}. To be more precise, we denote the global spaces induced by the local \dofs\   in Lemma \ref{lem:dofu0}, Lemma \ref{lem:dofu1}, Lemma \ref{dof:U2} and Lemma \ref{dof:U3} as $\tilde{U}_{r+1}^0(\THWFT)$, $\tilde{U}_{r}^1(\THWFT)$, $\tilde{U}_{r-2}^2(\THWFT)$ and $\tilde{U}_{r-3}^3(\THWFT)$, respectively. For example, 
\begin{align*}
  \tilde{U}_{r+1}^0(\THWFT) :=\{u: ~
  & u|_{T} \in U_{r+1}^0(\WFT), \text{ for all } T \in \THWFT, \text{ such that} \\ 
  & \text{the    \dofs\  \eqref{U0:dofa}-\eqref{U0:dofh}
    applied to $u$ from adjacent elements coincide}\}.
\end{align*}
The next lemma shows that such spaces are the same as those in \eqref{eqn:Uglobal}.
Its proof is similar to \cite[Lemma 6.7]{guzman2022exact}, so we will be brief.
\begin{lemma}\label{lem:local-to-global}
    The global spaces $\tilde{U}_{r+1}^0(\THWFT)$, $\tilde{U}_{r}^1(\THWFT)$, $\tilde{U}_{r-2}^2(\THWFT)$ and $\tilde{U}_{r-3}^3(\THWFT)$
    are the same as the spaces $U_{r+1}^0(\THWFT)$, $U_{r}^1(\THWFT)$, $U_{r-2}^2(\THWFT)$ and $U_{r-3}^3(\THWFT)$, respectively.
\end{lemma}}
\begin{proof}
  \rev{We only show the proof for $U_{r}^1(\THWFT)$ as the remaining cases follow by the same reasoning. To prove that $\tilde{U}_{r}^1(\THWFT) = U_{r}^1(\THWFT),$ we use the
    characterization of $U_{r}^1(\THWFT)$  in Theorem~\ref{thm:global~charU1}. Clearly,  $U_{r}^1(\THWFT) \subset \tilde{U}_{r}^1(\THWFT)$  since the continuity conditions in the characterization of Theorem~\ref{thm:global~charU1} imply that
    the \dofs\  \eqref{dof:U1}
    applied to any $u$ in $U_{r}^1(\THWFT)$ are single valued.}

\rev{
For the other direction, let  function $\chi(S)$ denote the characteristic function of a simplex $S$. 
Let $T_1$ and $T_2$ be adjacent tetrahedra in $\mathcal{T}_h$ that share a face $F$. 
Let $K_1$ and $K_2$ be two tetrahedra in the Alfeld splits $T^a_1$ and $T^a_2$, respectively, such that $K_1$ and $K_2$ share the face $F$. 
Let $K^{wf}_i$ be the triangulation of $K_i$ in $\THWFT$, where $1 \leq i \leq 2$. 
Let $u_1 \in U_{r}^1(\WFT_1)$ and $u_2 \in U_{r}^1(\WFT_2)$ such that $u_1$ and $u_2$ have the same \dof\ values \eqref{U1:dofa}-\eqref{U1:dofj} associated with the common vertices, common edges and the triangulation $\Fct$.  Note that the natural extension of $u_1$ (resp., $u_2$) from $K_1^{wf}$ (resp., $K_2^{wf}$) to all of $K_1^{wf} \cup K_2^{wf} $ maintains its original smoothness properties across the interior faces of $K_2^{wf}$ (resp., $K_1^{wf}$).
Thus, by applying the unisolvency argument
in the proof of Lemma \ref{lem:dofu1} verbatim to $w:=u_1-u_2$,
we conclude that $w=0$, $(\curl \, w)'_{\FF}=0$, $(\curl \, w)'_{\Fn}=0$, $(\Inc \, w)n_{\F} = 0$ and $(\Inc \, w)_{\FF} = 0$ on $F$.
Therefore, $u:=u_1 \chi(T_1)+u_2 \chi(T_2) \in U^1_{r}(\WFT_1 \cup \WFT_2)$, and we conclude the reverse inclusion $\tilde{U}_{r}^1(\THWFT) \subset U_{r}^1(\THWFT).$
}
\end{proof}

Then we have the global complex summarized in the following theorem. Its proof follows along the same lines
 as Theorem \ref{thm:elseq}, with Theorem \ref{thm:global~preseq} in place of Theorem \ref{thm:preseq}.
\begin{thm} \label{thm:global~elseq}
    The following sequence of global finite element spaces
     \begin{equation}\label{eqn:global~elseq}
         0 \rightarrow \Rig  \xrightarrow{\subset} {U}_{r+1}^0(\THWFT) \xrightarrow{\varepsilon} {U}_{r}^1(\THWFT) \xrightarrow{\Inc} {U}_{r-2}^2(\THWFT) \xrightarrow{\Div} {U}_{r-3}^3(\THWFT) \rightarrow 0
    \end{equation}
    is a discrete elasticity complex and is exact for $r \ge 3$.
\end{thm}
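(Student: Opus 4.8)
The plan is to replicate, essentially verbatim, the argument given for the local sequence \eqref{eqn:elseq} in Theorem \ref{thm:elseq}, substituting each local ingredient by its global counterpart. Specifically, the exact global de~Rham complexes \eqref{eqn:global~seq1}--\eqref{eqn:global~seq2} take the place of the local complexes \eqref{eqn:seq1}--\eqref{eqn:seq2b}; the derived sequence \eqref{eqn:global~preseq} of Theorem \ref{thm:global~preseq} takes the place of the local derived sequence \eqref{eqn:preseq}; and the global characterization of $U_r^1(\THWFT)$ in Theorem \ref{thm:global~charU1} takes the place of Theorem \ref{thm:charU1}. The algebraic identities \eqref{eqn:iden3} and \eqref{eqn:iden4}, being pointwise, carry over unchanged.

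First I would verify that \eqref{eqn:global~elseq} is a complex. For $u \in U_{r+1}^0(\THWFT) = \cS_{r+1}^0(\THWFT)\otimes\bV$, the first map in \eqref{eqn:global~seq2} gives $\Grad u \in \cS_r^1(\THWFT)\otimes\bV$, so $\varepsilon(u) = \sym\,\Grad u \in U_r^1(\THWFT)$. For $u \in U_r^1(\THWFT)$, writing $u = \sym(w)$ with $w \in \cS_r^1(\THWFT)\otimes\bV$, identity \eqref{eqn:iden3} gives $\Inc u = \inc w$ while \eqref{eqn:iden4} gives $\vskw(\Inc u)=0$; together with Theorem \ref{thm:global~charU1} this places $\Inc u$ in $U_{r-2}^2(\THWFT)$, in particular respecting the interelement $\theta_e$-matching built into $\mathscr{V}^2_{r-2}(\THWFT)$. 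Finally $\Div$ maps $U_{r-2}^2(\THWFT)$ into $V_{r-3}^3(\THWFT)\otimes\bV = U_{r-3}^3(\THWFT)$.

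Exactness I would then establish from right to left, exactly as in Theorem \ref{thm:elseq}. Surjectivity of $\Div$ and exactness at $U^2$ follow from Theorem \ref{thm:global~preseq}: given $w\in U_{r-3}^3(\THWFT)$, surjectivity of the last map applied to $(0,w)$ yields $v$ with $\Div v = w$ and $2\vskw(v)=0$, hence $v\in U_{r-2}^2(\THWFT)$; and given $w\in U_{r-2}^2(\THWFT)$ with $\Div w=0$, both components of the last map vanish, so exactness of \eqref{eqn:global~preseq} gives $v\in \cS_r^1(\THWFT)\otimes\bV$ with $\inc v = w$, whence $u=\sym(v)$ satisfies $\Inc u=w$ via \eqref{eqn:iden3} and \eqref{eqn:iden5}. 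For exactness at $U^1$, if $\Inc w=0$ with $w=\sym(v)$ then $\inc v=0$, and \eqref{eqn:global~preseq} supplies $(u,z)$ with $v=\Grad u-\mskw(z)$, giving $\varepsilon(u)=\sym(v)=w$. The left end, $\ker\varepsilon = \Rig$ inside $U_{r+1}^0(\THWFT)$, is the classical fact that the only displacements with vanishing symmetric gradient on the connected domain $\Omega$ are the rigid motions, all of which are affine and hence lie in $\cS_{r+1}^0(\THWFT)\otimes\bV$.

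The main obstacle, as in the local case, is that the global spaces carry nontrivial interelement conformity --- global $H^1(\Omega;\bS)$ symmetry for $U^1$, and $H(\Div;\Omega)$ conformity together with the $\theta_e$ edge conditions on Clough--Tocher faces for $U^2$ --- so one must confirm that the fields produced by the derived-sequence exactness genuinely inherit these constraints rather than merely the elementwise polynomial structure. This difficulty is, however, already absorbed into Theorem \ref{thm:global~preseq}, whose constituent spaces $\cS_r^1(\THWFT)$ and $\mathscr{V}_{r-2}^2(\THWFT)$ encode precisely the required global continuity, and into Theorem \ref{thm:global~charU1}, which certifies that $\sym$ of such a global field lands in $U_r^1(\THWFT)$. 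Consequently no new estimates are needed beyond those packaged in the cited theorems, and the proof reduces to the bookkeeping of the four exactness steps above.
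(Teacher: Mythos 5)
Your proposal is correct and matches the paper's own treatment: the paper proves Theorem \ref{thm:global~elseq} precisely by repeating the argument of Theorem \ref{thm:elseq} with Theorem \ref{thm:global~preseq} substituted for Theorem \ref{thm:preseq}, which is exactly the substitution scheme you carry out. Your added observations (using Theorem \ref{thm:global~charU1} to confirm $\Inc u$ inherits the $\theta_e$-conformity of $\mathscr{V}^2_{r-2}(\THWFT)$, and the classical rigid-motion argument for $\ker\varepsilon=\Rig$) are consistent with, and merely flesh out, the details the paper leaves implicit.
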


\section{Conclusions}\label{sec:Conclude}
This paper constructed
both local and global finite element elasticity complexes with respect 
to three-dimensional Worsey-Farin splits. 
A notable
feature of the discrete spaces is the lack of 
extrinsic supersmoothess and
accompanying \dofs\  at vertices in the triangulation.  
For example,
the $H({\rm div}, \mathbb{S})$-conforming space does not involve
vertex or edge \dofs\  and is therefore conducive for hybridization.
\rev{The efficient implementation of these elements with hybridization,
with an emphasis on the lowest-order pair, is a subject of future work.
Our  results suggest} 
that the last two pairs in the sequence \eqref{eqn:global~elseq} are suitable
to construct mixed finite element methods for three-dimensional elasticity.
However, due to the assumed regularity in Theorem \ref{thm:Commuting},
the result does not automatically yield an inf-sup stable pair.
Further study of commuting projections for 
the pair $U_{r-2}^2(\THWFT)\times U_{r-3}^3(\THWFT)$ is required 
to prove inf-sup stability.


 \appendix
\section{Proof of Theorem \ref{thm:2delaseq}}\label{apdix:sec3}
We require a few intermediate results
to prove Theorem \ref{thm:2delaseq}.
First, we state a corollary of 
Theorem \ref{2dseqs}.
\begin{cor} \label{cor:gradcurl~sven}
 Let $r \ge 1$. The following sequence is exact.
 \begin{equation}
     0\
{\xrightarrow{\hspace*{0.5cm}}}\
\mathring{S}_{r}^0(\Fct) \otimes \bV_2 \
\stackrel{{\rm grad}_\F}{\xrightarrow{\hspace*{0.5cm}}}\
\mathring{Q}_{{\rm inc}, r-1}^1(\Fct)\
\stackrel{\curl_\F}{\xrightarrow{\hspace*{0.5cm}}}\
\mathring{V}_{\curl,r-2}^1(\Fct) \cap \big(\mathring{V}_{r-2}^2(\Fct) \otimes \bV_2\big)\
\xrightarrow{\hspace*{0.5cm}}\
 0. \label{eqn:gradcurl~sven}
 \end{equation}     
 \end{cor}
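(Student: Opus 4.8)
The plan is to read the entire sequence through the tensor factor $\bV_2$: each space is $\bV_2$-graded, and both $\Grad_\F$ and $\curl_\F$ act only on the tangential index while leaving the $\bV_2$ index untouched. Writing a matrix field $v\in\mathring{\Lag}_{r-1}^1(\Fct)\otimes\bV_2$ in terms of its two $\bV_2$-columns $v_{\cdot 1},v_{\cdot 2}\in\mathring{\Lag}_{r-1}^1(\Fct)$, the curl $\curl_\F v$ is the $\bV_2$-valued scalar field whose $j$-th entry is $\curl_\F v_{\cdot j}$. The key observation is that the whole corollary reduces to the column-wise application of the homogeneous de Rham complex \eqref{2dbdryseq2} of Theorem~\ref{2dseqs},
\begin{equation*}
0 \to \mathring{S}_r^0(\Fct) \xrightarrow{\Grad_\F} \mathring{\Lag}_{r-1}^1(\Fct) \xrightarrow{\curl_\F} \mathring{V}_{r-2}^2(\Fct) \to 0,
\end{equation*}
together with the recognition that the constraint defining $\mathring{Q}_{{\rm inc},r-1}^1(\Fct)$ coincides with the constraint cutting out the codomain.

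First I would check that \eqref{eqn:gradcurl~sven} is a complex. For $u\in\mathring{S}_r^0(\Fct)\otimes\bV_2$, each column of $\Grad_\F u$ lies in $\mathring{\Lag}_{r-1}^1(\Fct)$ by definition of $\mathring{S}_r^0$, and $\curl_\F\Grad_\F u=0\in\mathring{V}_{\curl,r-2}^1(\Fct)$, so $\Grad_\F u\in\mathring{Q}_{{\rm inc},r-1}^1(\Fct)$ and the composition vanishes; for $v\in\mathring{Q}_{{\rm inc},r-1}^1(\Fct)$, the membership $\curl_\F v\in\mathring{V}_{\curl,r-2}^1(\Fct)$ is built into the definition, while column-wise exactness of \eqref{2dbdryseq2} places each $\curl_\F v_{\cdot j}$ in $\mathring{V}_{r-2}^2(\Fct)$, so $\curl_\F v$ lands in the stated intersection. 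Exactness at the left is immediate: a column with vanishing surface gradient is constant, and the homogeneous trace of $\mathring{S}_r^0(\Fct)$ forces it to be zero. At the middle, if $\curl_\F v=0$ then each column obeys $\curl_\F v_{\cdot j}=0$ with $v_{\cdot j}\in\mathring{\Lag}_{r-1}^1(\Fct)$, so exactness of \eqref{2dbdryseq2} at $\mathring{\Lag}_{r-1}^1(\Fct)$ gives $v_{\cdot j}=\Grad_\F u_j$ with $u_j\in\mathring{S}_r^0(\Fct)$, and assembling $u=(u_1,u_2)$ yields $v=\Grad_\F u$.

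For surjectivity onto $\mathring{V}_{\curl,r-2}^1(\Fct)\cap\big(\mathring{V}_{r-2}^2(\Fct)\otimes\bV_2\big)$, given $w$ in this intersection I solve column-wise $\curl_\F v_{\cdot j}=w_j$ using the surjectivity of $\curl_\F:\mathring{\Lag}_{r-1}^1(\Fct)\to\mathring{V}_{r-2}^2(\Fct)$ from \eqref{2dbdryseq2}, obtaining $v\in\mathring{\Lag}_{r-1}^1(\Fct)\otimes\bV_2$ with $\curl_\F v=w$. The one point needing care—the only place where this is more than bookkeeping—is confirming that this assembled $v$ actually lies in $\mathring{Q}_{{\rm inc},r-1}^1(\Fct)$: the columns are automatically in $\mathring{\Lag}_{r-1}^1(\Fct)$, but membership further requires $\curl_\F v\in\mathring{V}_{\curl,r-2}^1(\Fct)$, a genuine coupling of the two columns that the naive column-wise lifts do not a priori control. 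I expect this to be the crux, and it is resolved precisely because $\curl_\F v=w$ and $w$ was assumed to lie in $\mathring{V}_{\curl,r-2}^1(\Fct)$ by the very definition of the codomain; hence no correction of the lifts is needed, and the three exactness statements all follow from \eqref{2dbdryseq2}.
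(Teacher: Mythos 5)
Your proof is correct and is essentially the paper's own argument: the paper disposes of this corollary in one line, asserting that it ``directly follows from the exactness of the sequence \eqref{2dbdryseq2},'' and your component-wise reading in the $\bV_2$ index (lifting each slice via \eqref{2dbdryseq2} and observing that the constraint $\curl_\F v \in \mathring{V}_{\curl,r-2}^1(\Fct)$ defining $\mathring{Q}_{{\rm inc},r-1}^1(\Fct)$ is automatically met because the target of the surjectivity argument is the intersection) is precisely the detail that one-line proof suppresses.
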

 \begin{proof}
 This directly follows from the exactness of the sequence 
     \eqref{2dbdryseq2}.
 \end{proof}

\begin{lemma}\label{lem:2dpreela}
    The following sequences are exact for $r \ge 2$:
    \begin{equation} \label{eqn:2dpreelasvenb}
         \begin{bmatrix}
        \mathring{{S}}_{r+1}^0(\Fct) \otimes \bV_2 \\
        \mathring{\Lag}_{r}^0(\Fct)
    \end{bmatrix} \xrightarrow{\begin{bmatrix}
        \Grad_\F & {\rm skew}
    \end{bmatrix}}  \mathring{{Q}}_{{\rm inc}, r}^1(\Fct) \otimes \bV_2 \xrightarrow{\incF } {V}_{r-2}^2(\Fct)  \xrightarrow{\begin{bmatrix}
        \int_\F^{\perp} \\ \int_\F
    \end{bmatrix}}
       \begin{bmatrix}
        \bV_2 \\
        \mathbb{R}
    \end{bmatrix},
    \end{equation}
    \begin{equation} \label{eqn:2dpreelaairy}
         \begin{bmatrix}
         \mathbb{R} \\
        \bV_2
    \end{bmatrix} 
    \xrightarrow{\begin{bmatrix}
        \subset & x^\perp \cdot
    \end{bmatrix}}
    {{S}}_{r+1}^0(\Fct)
    \xrightarrow{\airy}
          {{V}}_{{\rm div}, r-1}^1(\Fct) \otimes \bV_2  \xrightarrow{\begin{bmatrix}
        {\rm skew} \\ \DivF
    \end{bmatrix}}
        \begin{bmatrix}
        {{V}}_{r-1}^2(\Fct)  \\
        {{V}}_{r-2}^2(\Fct) \otimes \bV_2
    \end{bmatrix} .
    \end{equation}
\end{lemma}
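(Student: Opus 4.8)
The plan is to obtain each sequence as the derived complex~\eqref{eq:1} of Proposition~\ref{prop:exactpattern}, fed with a carefully chosen pair of de~Rham complexes on $\Fct$ drawn from Theorem~\ref{2dseqs}, Corollary~\ref{cor:rotdiv} and Corollary~\ref{cor:gradcurl~sven}. In both constructions the connecting maps $s_i$ will be algebraic, the bijection $s_1$ will be the identity between two spaces that literally coincide, commutativity of~\eqref{eqn:seqpattern} will reduce to pointwise identities, and the two interior exactness statements will come from part~(1) of the proposition while the surjectivity of the terminal map will come from part~(2). The restriction $r\ge 2$ is exactly what is needed for the shifted input complexes (e.g.\ the appearance of $V_{r-2}^2(\Fct)$) to be valid.

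For~\eqref{eqn:2dpreelaairy} I would take the top row to be the smooth scalar rot--div complex $\mathbb{R}\xrightarrow{\subset}S_{r+1}^0(\Fct)\xrightarrow{\rot_\F}\Lag_r^1(\Fct)\xrightarrow{\DivF}V_{r-1}^2(\Fct)\to 0$ from~\eqref{altalfseq2} (with $r$ replaced by $r+1$), and the bottom row to be~\eqref{altalfseq1} tensored with $\bV_2$, namely $\bV_2\xrightarrow{\subset}\Lag_r^0(\Fct)\otimes\bV_2\xrightarrow{\rot_\F}V_{{\rm div},r-1}^1(\Fct)\otimes\bV_2\xrightarrow{\DivF}V_{r-2}^2(\Fct)\otimes\bV_2\to 0$. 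I take $s_0(c)=x^\perp\cdot c$, $s_1=\mathrm{id}$ (legitimate since $\Lag_r^0(\Fct)\otimes\bV_2=\Lag_r^1(\Fct)$) and $s_2={\rm skew}$; then the induced middle operator is $\rot_\F\circ s_1^{-1}\circ\rot_\F=\rot_\F\,\rot_\F=\airy$, and its kernel is $\cP_1(F)={\rm im}\,[\,\subset,\ x^\perp\cdot\,]$. Commutativity of~\eqref{eqn:seqpattern} is the pair of pointwise facts $\rot_\F(x^\perp\cdot c)=c$ and ${\rm skew}\circ\rot_\F=\DivF$ (the rotated form of~\eqref{2didenela3}). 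As both rows are exact, $s_1$ is bijective, and $\DivF$ is surjective in each row, Proposition~\ref{prop:exactpattern} gives exactness of~\eqref{eqn:2dpreelaairy} and surjectivity of $[{\rm skew};\DivF]$.

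For~\eqref{eqn:2dpreelasvenb} I would pair the smoother matrix-valued grad--curl complex of Corollary~\ref{cor:gradcurl~sven}, closed off on the right, $\mathring{S}_{r+1}^0(\Fct)\otimes\bV_2\xrightarrow{\Grad_\F}\mathring{Q}_{{\rm inc},r}^1(\Fct)\xrightarrow{\curl_\F}\mathring{V}_{\curl,r-1}^1(\Fct)\xrightarrow{w\mapsto\int_F w\,dx}\bV_2$, with the boundary-conditioned scalar grad--curl complex of~\eqref{2dbdryseq1}, likewise closed off, $\mathring{\Lag}_r^0(\Fct)\xrightarrow{\Grad_\F}\mathring{V}_{\curl,r-1}^1(\Fct)\xrightarrow{\curl_\F}V_{r-2}^2(\Fct)\xrightarrow{\int_\F}\mathbb{R}$. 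Taking $s_0={\rm skew}$, $s_1=\mathrm{id}$, $s_2=\int_\F^\perp$, the induced middle operator is $\curl_\F\circ s_1^{-1}\circ\curl_\F=\curl_\F\,\curl_\F=\incF$; commutativity of the left square is $\curl_\F\,{\rm skew}=\tau\,\Grad_\F$ from~\eqref{2didenela3}, and of the right square, $\int_F(\cdot)\,dx=\int_\F^\perp\circ\curl_\F$ on $\mathring{V}_{\curl,r-1}^1(\Fct)$, a single integration by parts whose boundary term vanishes by the tangential boundary condition. The point that must be checked is that these augmented rows remain exact: Corollary~\ref{cor:gradcurl~sven} identifies ${\rm im}\,\curl_\F=\mathring{V}_{\curl,r-1}^1(\Fct)\cap(\mathring{V}_{r-1}^2(\Fct)\otimes\bV_2)$, which is precisely the mean-zero kernel of $w\mapsto\int_F w\,dx$ so that the top cokernel is $\bV_2$, while~\eqref{2dbdryseq1} gives ${\rm im}\,\curl_\F=\mathring{V}_{r-2}^2(\Fct)=\ker\int_\F$ so the bottom cokernel is $\mathbb{R}$. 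Proposition~\ref{prop:exactpattern} then yields~\eqref{eqn:2dpreelasvenb}.

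The pointwise algebra and the fact that $s_1$ is merely the identity make most of this routine; in particular the genuine difficulty of the three-dimensional analogue, where $s_1=\Xi$ has to be inverted, is absent here. The step I expect to demand the most care is the closing-off of the two curl rows by the moment functionals $\int_\F$ and $\int_\F^\perp$: one must verify both that each augmented four-term row is still exact, by matching the image of $\curl_\F$ with the kernel of the relevant moment map (through Corollary~\ref{cor:gradcurl~sven} and~\eqref{2dbdryseq1}), and that the squares involving those functionals commute, through the integration-by-parts identities that rely on the imposed tangential boundary conditions. Once these are settled, parts~(1) and~(2) of Proposition~\ref{prop:exactpattern} close out both sequences simultaneously.
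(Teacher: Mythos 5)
Your proposal is correct and is essentially the paper's own proof: the paper establishes the lemma by forming precisely these two commuting diagrams (its \eqref{eqn:2ddiagramsvenb} and \eqref{eqn:2ddiagramairy}), with the same rows drawn from Corollary~\ref{cor:gradcurl~sven}, Theorem~\ref{2dseqs} and Corollary~\ref{cor:rotdiv}, the same connecting maps ${\rm skew}$, $\int_\F^{\perp}$, $x^\perp\cdot$, the same closing-off of the curl rows by the moment functionals, and then applying Proposition~\ref{prop:exactpattern}. The only difference is bookkeeping: where you take $s_1=\mathrm{id}$ and write $\incF=\curl_\F\,\curl_\F$, $\airy=\rot_\F\,\rot_\F$, the paper takes $s_1=\tau$ (the transpose, i.e., your row/column identification made explicit) and uses the identities $\incF=\curl_\F\,\tau\,\curl_\F$ and $\airy=\rot_\F\,\tau\,\rot_\F$, which is the strictly well-typed form of the same computation.
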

Here, $\int_\F^{\perp} u := \int_\F x^{\perp} u ~dx$ with $x^\perp$ defined in Definition \ref{def:Vperp}.
\begin{proof}

Using \eqref{2didenela3} and the identity
$\int_\F^{\perp} \, \curl_\F \, u = \int_\F \, \tau \, u$ for any $u \in \mathring{{V}}_{\curl,r-1}^1(\Fct)$, 
we find that the following sequence commutes:
\begin{equation}\label{eqn:2ddiagramsvenb}
    \begin{tikzcd}
 \mathring{{S}}_{r+1}^0(\Fct) \otimes \bV_2 \arrow{r}{\Grad_\F} & \mathring{{Q}}_{{\rm inc}, r}^1(\Fct) \otimes \bV_2 \arrow{r}{\curl_\F} & \mathring{{V}}_{\curl,r-1}^1(\Fct) \arrow{r}{\int_\F} & \bV_2 \\ 
\mathring{{\Lag}}_{r}^0(\Fct) \arrow{r}{\Grad_\F}  \arrow[ur, "{{\rm skew}}"] & \mathring{{V}}_{\curl,r-1}^1(\Fct) \arrow{r}{\curl_\F} \arrow[ur, "\tau"] & {V}_{r-2}^2(\Fct) \arrow{r}{\int_\F} \arrow[ur, "{\int_\F^{\perp}}"]& \mathbb{R},
\end{tikzcd}
\end{equation}
Moreover, 
the transpose operator $\tau$ from 
$\mathring{{V}}_{\curl,r-1}^1(\Fct)$ to $\mathring{{V}}_{\curl,r-1}^1(\Fct)$ is a bijection,
and the top and bottom sequences in \eqref{eqn:2ddiagramsvenb}
are exact by Corollary \ref{cor:gradcurl~sven}
and Theorem \ref{2dseqs}, respectively.
Using the identity $\incF = \curl_\F \tau \curl_\F$
and Proposition \ref{prop:exactpattern},
we conclude that \ref{eqn:2dpreelasvenb} is exact.

Likewise, using the identity 
$\DivF \, \tau \, u = {\rm skew} \, \rot_\F \, u$ for any $u \in ({{\Lag}}_{r}^0(\Fct) \otimes \bV_2 )$ and $\rot_\F x^\perp =\tau$,
we find that the following sequence commutes:
\begin{equation}\label{eqn:2ddiagramairy}
    \begin{tikzcd}
\mathbb{R} \arrow{r}{\subset} &  {{S}}_{r+1}^0(\Fct)  \arrow{r}{\rot_\F} & {{\Lag}}_{r}^1(\Fct)  \arrow{r}{\DivF} & {V}_{r-1}^2(\Fct)  \\ 
 \bV_2 \arrow{r}{\subset} \arrow[ur, "x^\perp \cdot"] &  {{\Lag}}_{r}^0(\Fct) \otimes \bV_2 \arrow{r}{\rot_\F} \arrow[ur, "\tau"] & {{V}}_{{\rm div},r-1}^1(\Fct) \otimes \bV_2 \arrow{r}{\DivF} \arrow[ur, "{\rm skew}"] & {V}_{r-2}^2(\Fct) \otimes \bV_2.
\end{tikzcd}
\end{equation}
The top and bottom sequences in \eqref{eqn:2ddiagramairy}
are exact by Corollary \ref{cor:rotdiv}.
We then find that \eqref{eqn:2dpreelaairy}
is exact by Proposition \ref{prop:exactpattern},
using the identity $\airy = \rot_F \tau\rot_F$.

\end{proof}

Now we are ready to prove Theorem \ref{thm:2delaseq}:
\begin{proof}
    (i) Proof of \eqref{elaseqsvenb}: 
    from the definitions of the discrete spaces and operators, we see that \eqref{elaseqsvenb} is a complex, so we only need to show exactness. 
    
    Let $v \in \mathring{Q}^2_{r-2} (\Fct)$. Then since $v\perp \mathcal{P}_1(F)$, we have $\int_\F v =0$ and 
    $\int_\F^\perp v = 0$. By the exactness of \eqref{eqn:2dpreelasvenb}, 
    there exists $u \in  \mathring{Q}_{{\rm inc},r}^{1}(\Fct)$ such that $\incF \, u = v$. But by \eqref{2didenela2}, we have $\incF\, \sym \, u = \incF \, u =v$. Thus we found a function $w = \sym \, u \in \mathring{Q}_{{\rm inc},r}^{1,s}(\Fct)$ such that $\incF \, w = v$.   
    
    Next, let $u \in \mathring{Q}_{{\rm inc},r}^{1,s}(\Fct)$ with $\incF \, u = 0$. Then $u = \sym(z)$ for some $z \in \mathring{Q}_{{\rm inc},r}^{1}(\Fct)$ and $\incF \, z = 0$ due to \eqref{2didenela2}. By exactness of \eqref{eqn:2dpreelasvenb}, we have $z = \Grad_\F w + {\rm skew} \, s$ for some $w \in  \mathring{{S}}_{r+1}^0(\Fct) \otimes \bV_2$ and $s \in  \mathring{{\Lag}}_{r}^0(\Fct)$. Then $u = \sym (z) = \varepsilon_\F(w) - \sym ({\rm skew} \, s) = \varepsilon_\F(w)$.

    (ii) Proof of \eqref{elaseqairy}: again, it is easy to see that \eqref{elaseqairy} is a complex, so we only need to show exactness. 
    
    Let $v \in V^2_{r-3} (\Fct) \otimes \bV_2$. Then by the exactness of \eqref{eqn:2dpreelaairy}, we have $u \in  {{V}}_{{\rm div}, r-2}^1(\Fct) \otimes \bV_2$ such that $\DivF \, u = v$ and ${\rm skew} \, u =0$ and thus making $u \in Q^1_{r-2}(\Fct)$.   
   
   Next, let $u \in Q^1_{r-2}(\Fct)$ with $\DivF \, u = 0$. 
   Then again using \eqref{eqn:2dpreelaairy} and  ${\rm skew} \, u =0$, 
   there exists $z \in {{S}}_{r}^0(\Fct)$ such that $\airy \, z = u$. 
   
   Finally, for any $u \in {{S}}_{r}^0(\Fct)$ with $\airy \, u =0$, we have $u = w + x^\perp \cdot s$ for some $w \in  \mathbb{R}$, $s \in  \bV_2$, and 
   $x$ a point on the face $F$. Therefore, $u \in \cP_1(F)$. 

\end{proof}

\section{Proof of Lemma \ref{lem:projRig}}
\begin{proof}

  We first show that $\dim P_\U \Rig=\dim \Rig=6$. This follows if we
  show that the kernel of $P_U$ is empty. Let $v \in \Rig$ and assume
  that $P_\U v=0$. Then, by the definition of $P_\U$ and the fact that
  $v$ is a linear function, we must have that $v$ vanishes on the
  barycenter of each $K \in \WFT$. This implies that $v \equiv 0$ if
  there are three such barycenters that are not collinear. To see that
  there are such barycenters, recall that the barycenter of
  $K \in \WFT$ is the average of the four vertices of $K$.  Hence the
  line connecting barycenters of two adjacent $K_\pm \in \WFT$ is
  parallel to the line connecting the two vertices opposite to the
  common face $F = \partial K_+ \cap \partial K_-$.  Thus taking, for
  example, three subtetrahedra in $\WFT$ with a face contained in a
  common $F \in \Delta_2(\mathcal{T}_h)$, we see that their
  barycenters cannot be collinear, since no three of their vertices
  are collinear.


We now prove \eqref{U03}.  Since $\dim \Rig = 6$ and by the definition of $\mathring{U}_{0}^3(\WFT)$, we have
\[
\dim \mathring{U}_{0}^3(\WFT) \ge \dim {U}_{0}^3(\WFT) - \dim \Rig =36-6 =30.
\]
We use that 
\begin{equation*}
{U}_{0}^3(\WFT)= \mathring{U}_{0}^3(\WFT) \oplus [\mathring{U}_{0}^3(\WFT)]^\perp,
\end{equation*}
and obtain  $\dim [\mathring{U}_{0}^3(\WFT)]^\perp \le 6$. 
However, one can easily show that  
$P_\U\Rig \subset [\mathring{U}_{0}^3(\WFT)]^\perp$ 
which implies $\dim [\mathring{U}_{0}^3(\WFT)]^\perp=6$ and  $P_\U\Rig =[\mathring{U}_{0}^3(\WFT)]^\perp$.

\end{proof}
 
\section{Proof of Lemma \ref{lem:symcts1}}
\begin{proof}
   Fix $F\in \Delta_2(T)$, and let $e \in \Delta_1^I(F^{ct})$
   be an internal edge in the induced Clough-Tocher split of $F$. 
   Let $f$ be the corresponding internal face of $\WFT$ with $e$ as an edge, and let $\bn_f$ is a unit-normal to $f$. 
   We further set $t_e$ to be a unit tangent vector to $e$ and $s_e = n_F\times t_e$ to be
   a unit tangent vector of $F$ orthogonal to $t_e$.

     Since $\bn_f \cdot \bt_e = 0$, we have $\bn_f = (\bn_f \cdot \bn_\F) \bn_\F + (\bn_f \cdot \bs_e)\bs_e$. 
    Since $\sigma \in V_{r}^2(\WFT) \otimes \bV$, we have $\sigma \bn_f$ is single-valued on $e$ and hence, by symmetry of $\sigma$, $(\sigma \ell) \cdot \bn_f$ is single-valued on $e$. Therefore, on $e$, with $(\sigma \ell) \cdot \bn_\F = \bn_\F '\sigma \ell = 0$, we have 
    $(\sigma \ell)\cdot \bn_f = (\bn_f \cdot \bs_e)(\sigma \ell)\cdot \bs_e$ and so $\jmp{\sigma_{\F \ell} \cdot \bs_e }_{e}=\jmp{(\sigma \ell)\cdot \bs_e }_{e}=0$ for any $e \in \Delta_1^I(F^{ct})$. Therefore, $\sigma_{\F \ell} \in V_{{\rm div}, r}^1(\Fct)$ on each $F \in \Delta_2(T)$.
\end{proof}

\section{Proof of Lemma \ref{lem:curlcts}}
\begin{proof}
Since $w \in V^1_{r-1}(\WFT) \otimes \bV$ and $w' \in V^2_{r-1}(\WFT) \otimes \bV$, then $\bn_f' w$, $w \bt_e$ and $w \bt_s$ are  continuous cross $e$ on $F$:
\begin{equation}\label{eqn:curlcts}
    \jmp{\bn_f' w}_{e} = 0, \quad \jmp{w \bt_e}_{e} = 0, \quad \jmp{w \bt_s}_{e} = 0.
\end{equation}
Let $\bs_e = \alpha_1 \bn_f + \beta_1 \bt_s$, $\bn_\F = \alpha_2 \bn_f + \beta_2 \bt_s$,
and note $\alpha_1\neq 0$ and $\beta_2 \neq 0$.

Since  \rev{$\bn_\F'wQ|_{F} = 0$}, for any $e \in \Delta_1^I(F^{ct})$, 
\begin{align*}
    0 = \jmp{\bn_\F' w \bs_e }_{e} & = \jmp{(\alpha_2 \bn_f' + \beta_2 \bt_s') w (\alpha_1 \bn_f + \beta_1 \bt_s) }_e \\
    & = \alpha_1 \alpha_2 \jmp{\bn_f'w \bn_f}_{e}+\alpha_2\beta_1 \jmp{\bn_f'w\bt_s}_e+\alpha_1\beta_2 \jmp{\bt_s'w \bn_f}_e + \beta_2\beta_1\jmp{\bt_s' w \bt_s}_e\\
    &{=\alpha_1\beta_2 \jmp{\bt_s'w \bn_f }_e.}
\end{align*}
Thus, we have
\begin{equation*}
    \jmp{\bt_s'w \bn_f}_e = 0,
\end{equation*}
and therefore
\begin{equation*}
    \jmp{\bs_e' w \bs_e}_e = \alpha_1^2\jmp{\bn_f'w \bn_f}_e+\alpha_1\beta_1\jmp{\bn_f'w\bt_s}_e+\alpha_1\beta_1\jmp{\bt_s'w \bn_f}_e + \beta_1^2 \jmp{\bt_s' w \bt_s}_e = 0.
\end{equation*} 

We have $\jmp{\bt_e' w \bn_f}_{e}=0$ since $w_{\FF} = 0$ and
\begin{align*}
    0 = \jmp{\bt_e' w \bs_e}_{e} = \jmp{\bt_e' w (\alpha_1 \bn_f + \beta_1 \bt_s) }_e = \alpha_1 \jmp{\bt_e'w \bn_f}_{e}+\beta_1 \jmp{\bt_e'w\bt_s}_e {= \alpha_1  \jmp{\bt_e'w \bn_f}_{e}},
\end{align*}
where we use \eqref{eqn:curlcts}. This implies that 
\begin{equation*}
    \jmp{\bt_e' w \bn_\F }_{e} = 0
\end{equation*}
since 
\[
\jmp{\bt_e' w \bn_\F }_{e} = \jmp{\bt_e' w (\alpha_2 \bn_f + \beta_2 \bt_s) }_{e} = 0. 
\]
\end{proof}

 \section{Proof of Lemma \ref{cor:curliden}}


 \begin{proof}
Write $\ell = a_1 \bt_1 + a_2 \bt_2$, $m = a_1\bt_2 - a_2 \bt_1$, where 
$\bt_1, \bt_2$ are tangential basis defined in Section \ref{subsec:Matrix}.
We also set $\bt_3 = \bn_\F$, and write $u = \sum\limits_{i,j = 1}^3 u_{ij} \bt_i \bt_j'$.
We then have the following identities 
for the components of $\bcurl u$ ($s\in \{1,2,3\})$:
    \begin{equation}\label{eqn:curl~represent}
    \begin{split}
        \bt_s' (\bcurl u) \bt_1 &= \partial_{\bt_2} u_{s3}-\partial_{\bt_3} u_{s2},\\
        \bt_s' (\bcurl u) \bt_2 &= \partial_{\bt_3} u_{s1}-\partial_{\bt_1} u_{s3},\\
         \bt_s' (\bcurl u) \bt_3 &= \partial_{\bt_1} u_{s2}-\partial_{\bt_2} u_{s1}.
         \end{split}
    \end{equation}

We then compute    
    \begin{equation}
    \label{eqn:Oner}
    \begin{split}
        \ell'(\bcurl u )m & =  (a_1 \bt_1 + a_2 \bt_2)' (\bcurl u )(a_1\bt_2 - a_2 \bt_1) \\ 
        & = (a_1)^2(\partial_{\bt_3} u_{11}-\partial_{\bt_1} u_{13})-(a_2)^2(\partial_{\bt_2} u_{23}-\partial_{\bt_3} u_{22}) \\
        & \qquad +a_1a_2(\partial_{\bt_3} u_{21}-\partial_{\bt_1} u_{23} - \partial_{\bt_2} u_{13}+\partial_{\bt_3} u_{12}) \\
        & = \partial_{\bt_3}((a_1)^2u_{11}+(a_2)^2u_{22}+a_1a_2(u_{21}+u_{12})) \\
        & \qquad - a_1(a_2\partial_{\bt_2} u_{13}+a_1\partial_{\bt_1} u_{13})-a_2(a_1\partial_{\bt_1} u_{23}+a_2\partial_{\bt_2} u_{23}) \\
        & = \partial_{\bt_3}(\ell'u_{\FF}\ell) - a_1 \partial_{\ell} u_{13} - a_2 \partial_{\ell} u_{23} \\
        & = \partial_n (\ell'u_{\FF}\ell)-\partial_l(u_{\Fn} \cdot \ell) = \partial_n (\ell'u_{\FF}\ell)-\Grad_\F (u_{\Fn} \cdot \ell) \cdot \ell.
        \end{split}
    \end{equation}
    Similarly, by using \eqref{eqn:curl~represent}, we have 
    \begin{equation}
    \begin{split}
    \label{eqn:Twoer}
        \ell'(\bcurl u ) \ell & =  (a_1 \bt_1 + a_2 \bt_2)' (\bcurl u )(a_1\bt_1 + a_2 \bt_2) \\ 
        & = (a_1)^2(\partial_{\bt_2} u_{13}-\partial_{\bt_3} u_{12})+(a_2)^2(\partial_{\bt_3} u_{21}-\partial_{\bt_1} u_{23}) \\
        & \qquad +a_1a_2(\partial_{\bt_2} u_{23}-\partial_{\bt_3} u_{22} + \partial_{\bt_3} u_{11}-\partial_{\bt_1} u_{13}) \\
        & = \partial_{\bt_3}(-(a_1)^2u_{12}+(a_2)^2u_{21}+a_1a_2(u_{11}-u_{22})) \\
        & \qquad - a_1(-a_1\partial_{\bt_2} u_{13}+a_2\partial_{\bt_1} u_{13})+a_2(a_1\partial_{\bt_2} u_{23}-a_2\partial_{\bt_1} u_{23}) \\
        & = -\partial_{\bt_3}(m'u_{\FF}\ell) + a_1 \partial_{m} u_{13} + a_2 \partial_{m} u_{23} \\
        & = -\partial_n (m'u_{\FF}\ell)+\partial_m(u_{\Fn} \cdot \ell) = -\partial_n (m'u_{\FF}\ell)+\Grad_\F (u_{\Fn} \cdot \ell) \cdot m.
    \end{split}
    \end{equation}
    Finally, again by using \eqref{eqn:curl~represent}, we have 
    \begin{equation}
    \label{eqn:Threeer}
    \begin{split}
        \bn_F'(\bcurl u ) \ell & =  \bt_3' (\bcurl u )(a_1\bt_1 + a_2 \bt_2) \\ 
        & = (a_1\partial_{\bt_2} u_{33} - a_2\partial_{\bt_1} u_{33}) + \partial_{\bt_3} (a_2u_{31}-a_1a_{32}) \\
        & = \partial_m u_{33} - \partial_n(u_\nF \cdot m) = (\Grad_F u_{33}) \cdot m - \partial_n(u_\nF \cdot m).
    \end{split}
    \end{equation}
Lemma \ref{cor:curliden} now follows from \eqref{eqn:Oner}--\eqref{eqn:Threeer}
and the first case in Lemma \ref{lem:mu}.    
\end{proof}

\section{Proof of Lemma \ref{lem:curlFF}}
\begin{proof}
    (i) {\bf Continuity:} we show the continuity of $w_{\FF} - \Grad_\F \rev{u_{n\F}^\perp}$. Recall the notation from Section \ref{subsec:WFconstruct}. 
    Since $w \in V^1_{r-1}(\WFT) \otimes \bV$ {(by Theorem \ref{thm:charU1})}, 
for any $e\in \Delta_1^I(F^{ct})$    we have $\jmp{w_{\FF} \bt_e}_e = 0$ due to $\jmp{w \bt_e}_e = 0$. Consequently, 
    because $u$ is continuous, we have 
    \begin{equation}\label{eqn:curlt}
    \jmp{(w_{\FF} - \Grad_\F \rev{u_{n\F}^\perp}) \bt_e }_{e} = 0.
\end{equation}
Now to prove the continuity of $w_{\FF} - \Grad_\F \rev{u_{n\F}^\perp}$ on $F$, it suffices to prove
$\jmp{(w_{\FF} - \Grad_\F \rev{u_{n\F}^\perp}) \bs_e }_{e} = 0$ for all $e\in \Delta^I_1(F^{ct})$.
Using $w' \in V^2_{r-1}(\WFT) \otimes \bV$ and $w_{Fn} =0$,
by Lemma \ref{lem:curlcts} we have
    \begin{equation} \label{eqn:curlss}
        \jmp{\bs_e' w_{\FF} \bs_e }_{e}=\jmp{\bs_e' w \bs_e}_e=0.
    \end{equation}
    
    


Next we show that $\jmp{\bs_e' \Grad_\F(\rev{u_{n\F}^\perp}) \bs_e }_{e}=0$ and $\jmp{\bt_e' (w_{\FF} - \rev{u_{n\F}^\perp}) \bs_e }_{e}=0$. Since $u \in \Lag_{r}^1(\WFT) \otimes \bV$ and $u_{\FF} =0$ on $F$, we have
\begin{align}\label{eqn:uFns1}
   \jmp{\bs_e' \Grad_\F(\rev{u_{n\F}^\perp}) \bs_e }_{e} & =\jmp{\Grad_\F(\rev{u_{n\F}^\perp} \cdot \bs_e) \cdot \bs_e }_{e}=\jmp{\Grad_\F(u_{\Fn}^\perp \cdot \bs_e) \cdot \bs_e }_{e} \nonumber \\
   & =\jmp{\Grad_\F(u_{\Fn} \cdot \bt_e) \cdot \bs_e }_{e} = -\jmp{\bt_e' (\bcurl u)' \bt_e }_{e} = 0, 
\end{align}
where the \rev{third} equality comes from \eqref{iden:Vperp} and the \rev{fourth} equality uses \eqref{eqn:curliden2} in Lemma \ref{cor:curliden} with $\ell = \bt_e$ and $m =\bs_e$. 
Similarly by \eqref{eqn:curliden1} in Lemma \ref{cor:curliden} with $\ell = \bs_e$, $m = - \bt_e$ and \eqref{iden:Vperp}, we have $\jmp{\bt_e' \Grad_\F(\rev{u_{n\F}^\perp}) \bs_e }_{e}= \jmp{\bt_e' (\bcurl u)' \bs_e }_{e}$. 
Therefore, we have
\begin{equation}\label{eqn:uFns2}
   \jmp{\bt_e' (w_{\FF} - \Grad_F \rev{u_{n\F}^\perp}) \bs_e }_{e}=\jmp{\bt_e' [(\bcurl u)']_{\FF} \bs_e}_e-\jmp{\bt_e' (\bcurl u)' \bs_e }_{e} = 0.
\end{equation}
Combining \eqref{eqn:curlt}, \eqref{eqn:curlss}, \eqref{eqn:uFns1} and \eqref{eqn:uFns2}, we conclude that $w_{\FF} - \Grad_\F \rev{u_{n\F}^\perp}$ is continuous on $F$.

(ii) {\bf Proof of \eqref{eqn:curlFFiden}:} With \eqref{more2},  \eqref{more3} and \eqref{eq:10}, we have 
\[
2w_{\FF} = \Grad_\F (\Grad_\F (v \cdot \bn_\F) \times \bn_\F  - (\partial_n v_\F)\times \bn_\F).
\]
Then with \eqref{more4}, \eqref{more5} and \eqref{iden:Vperp}, we obtain
\[
2\Grad_\F u_{\nF}^\perp = 2 \Grad_\F [(\varepsilon(v))_{\nF}]^\perp =
\Grad_\F(\Grad_\F(v \cdot \bn_\F) \times \bn_\F + (\partial_n v_\F)\times \bn_\F).
\]
Therefore, by computing the difference of the above two equations, we conclude 
that
$
w_{\FF} - \Grad_\F u_{\nF}^\perp = \Grad_\F(\partial_n v_\F \times \bn_\F).
$
\end{proof}

\bibliographystyle{siam}
\bibliography{ref}

\end{document}